\def\CC{\mathbb C}
\def\RR{\mathbb R}
\def\HH{\mathbb H}
\def\AA{{\mathbb A}}
\def\OO{\mathbb O}
\def\11{\mathbf 1}
\def\PP{\mathbb P}
\def\e1{\varepsilon_1}
\def\e2{\varepsilon_2}
\def\e3{\varepsilon_3}
\def\P2{{\PP}^2}
\def\00{\underline{0}}
\def\J0{{\cal J}_3(\underline{0})}
\def\PJ0{\PP({\cal J}_3(\underline{0}))}
\def\e{\varepsilon}
\def\AP2{{\AA\PP}^2}
\def\RP2{{\RR\PP}^2}
\def\CP2{{\CC\PP}^2}
\def\HP2{{\HH\PP}^2}
\def\OP2{{\OO\PP}^2}
\newtheorem{theo}{Theorem}[section]
\newtheorem{coro}[theo]{Corollary}
\newtheorem{lemm}[theo]{Lemma}
\newtheorem{prop}[theo]{Proposition}
\theoremstyle{remark}
\newtheorem{rema}[theo]{Remark}
\theoremstyle{definition}
\begin{document}

\title{Demushkin groups and inverse Galois theory for
pro-$p$-groups of finite rank and maximal $p$-extensions}

\author
{I.D. Chipchakov}
\address{Institute of Mathematics and Informatics\\Bulgarian Academy
of Sciences\\Acad. G. Bonchev Str., bl. 8\\1113, Sofia, Bulgaria;
email: chipchak@math.bas.bg} \keywords{Maximal $p$-extension,
Demushkin group, $p$-Henselian field, immediate extension, Brauer
group, norm group, symbol algebra. MSC 2010 Subject Classification:
12J10 (Primary); 12F10 12E30}

\begin{abstract}
This paper proves that if $E$ is a field, such that the Galois group
$\mathcal{G}(E(p)/E)$ of the maximal $p$-extension $E(p)/E$ is a
Demushkin group of finite rank $r(p)_{E} \ge 3$, for some prime
number $p$, then $\mathcal{G}(E(p)/E)$ does not possess nontrivial
proper decomposition groups. When $r(p)_{E} = 2$, it describes the
decomposition groups of $\mathcal{G}(E(p)/E)$. The paper shows that
if $(K, v)$ is a $p$-Henselian valued field with $r(p)_{K} \in
\mathbb N$ and a residue field of characteristic $p$, then $P \cong
\widetilde P$ or $P$ is presentable as a semidirect product $\mathbb
Z _{p} ^{\tau } \rtimes \widetilde P$, for some $\tau \in \mathbb
N$, where $\widetilde P$ is a Demushkin group of rank $\ge 3$ or a
free pro-$p$-group. It also proves that when $\widetilde P$ is of
the former type, it is continuously isomorphic to $\mathcal{G}(K
^{\prime }(p)/K ^{\prime })$, for some local field $K ^{\prime }$
containing a primitive $p$-th root of unity.
\end{abstract}

\maketitle

\section{\bf Introduction and statements of the main results}

Let $P$ be a pro-$p$-group, for some prime number $p$, and let
$r(P)$ be the rank of $P$, i.e. the cardinality of any minimal
system of generators of $P$ as a profinite group. This paper is
devoted to the study of $P$ in case $r(P) \in \mathbb N$ and $P$ is
admissible, i.e. (continuously) isomorphic to the Galois group
$\mathcal{G}(E(p)/E)$ of the maximal $p$-extension $E(p)$ of a field
$E$ (in a separable closure $E _{\rm sep}$ of $E$). Denote for
brevity by $r(p)_{E}$ the rank of $\mathcal{G}(E(p)/E)$. It is known
that if $p = {\rm char}(E)$, then $\mathcal{G}(E(p)/E)$ is a free
pro-$p$-group (cf. \cite{S2}, Ch. II, Proposition~2), so we consider
fields of characteristic different from $p$. Recall that if $r(P) =
1$, then $P$ is admissible if and only if it is isomorphic to the
additive group $\mathbb Z _{p}$ of $p$-adic integers or $p = 2$ and
$P$ is of order $2$ (cf. \cite{Wh}, Theorem~2). Admissible $P$ have
been described, up-to isomorphisms, in the special case where $r(P)
= 2$ and the ground field contains a primitive $p$-th root of unity
(see \cite{Lab2}, page 107, and \cite{EV}). The description relies
on the fact that then $P$ is a free pro-$p$-group or a Demushkin
group unless $p = 2$ and $E$ is a formally real field, in the sense
of Artin-Schreier (cf. \cite{W2}, Lemma~7, \cite{L}, Ch. XI, Sect.
2, and \cite{S1}, Ch. I, 3.3 and 4.5).
\par
\medskip
In this paper, we focus our attention on the case where $r(P) \ge 3$
and the considered ground fields are endowed with $p$-Henselian
valuation. Our starting point is the following result of local
class field theory concerning $r(p)_{F} = r$, for an arbitrary
finite extension $F$ of the field $\mathbb Q _{p}$ of $p$-adic
numbers of degree $N$ (see \cite{Ch} and \cite{S2}, Ch. II,
Theorems~3 and 4):
\par
\medskip
(1.1) (i) $r = N + 2$ in case $F$ contains a primitive $p$-th root
of unity; $r = N + 1$, otherwise.
\par
(ii) The index of $F ^{\ast p}$ in the multiplicative group $F
^{\ast }$ of $F$ is equal to $p ^{r}$.
\par
\medskip\noindent
The structure of $\mathcal{G}(F(p)/F)$ is determined as follows:
\par
\medskip
(1.2) $\mathcal{G}(F(p)/F)$ is a free pro-$p$-group, if $F$ does not
contain a primitive $p$-th root of unity (Shafarevich \cite{Sh}, see
also \cite{S2}, Ch. II, Theorem~3); $\mathcal{G}(F(p)/F)$ is a
Demushkin group, otherwise (Demushkin \cite{D1,D2}, Labute
\cite{Lab2} and Serre \cite{S1}, see also \cite{S2}, Ch. II,
Theorem~4).
\par
\medskip
For convenience of the reader, we recall that an infinite
pro-$p$-group $P$ is said to be a Demushkin group, if the continuous
cohomology group homomorphism $\varphi _{h}\colon \ H ^{1} (P, \mathbb F
_{p}) \to H ^{2}(P, \mathbb F _{p})$ mapping each $g \in H ^{1} (P,
\mathbb F _{p})$ into the cup-product $h \cup g$ is surjective, for
every $h \in H ^{1} (P, \mathbb F _{p}) \setminus \{0\}$, and the
group $H ^{2}(P, \mathbb F _{p})$ is of order $p$ (throughout this
paper, $\mathbb F _{p}$ denotes a field with $p$ elements). The
papers referred to in (1.2) contain a classification, up-to a
continuous isomorphism, of Demushkin pro-$p$-groups of finite ranks,
for each prime $p$. In particular, this classification yields the
following:

\par
\medskip
(1.3) (i) The ranks of finitely-generated Demushkin pro-$p$-groups
are even numbers, provided that $p > 2$;
\par
(ii) For each pair $(d, \theta )$ of positive integers with $2 \mid
d$, there exists a Demushkin group $P _{d,\theta }$, such that $r(P
_{d,\theta }) = d$ and the reduced component of the (continuous)
character group $C(P _{d,\theta })$ is cyclic of order $p ^{\theta
-1}$; when $p
> 2$ or $\theta \neq 2$, $P _{d,\theta }$ is uniquely determined,
up-to an isomorphism;
\par
(iii) For any integer $d \ge 2$, there are pairwise nonisomorphic
Demushkin pro-$2$-groups $D _{n}$, $n \in \mathbb N$, such that $r(D
_{n}) = d$ and the reduced components of $C(D _{n})$ are of order
$2$, for each index $n$.
\par
\medskip
As shown by Pop \cite{Pop}, (2.7), statement (1.2) retains validity,
if $(F, v)$ is a Henselian real-valued field with a residue field
$\widehat F$ of characteristic $p$, a $p$-indivisible value group
$v(F)$ and a finite quotient group $F ^{\ast }/F ^{\ast p}$. These
results relate the admissibility problem for Demushkin groups of
finite ranks with the open question of whether every admissible
Demushkin group $\widetilde P$ of rank $r(\widetilde P) \ge 3$ is
standardly admissible, i.e. $\widetilde P \cong
\mathcal{G}(F(p)/F)$, for some finite extension $F/\mathbb Q _{p}$
(see \cite{E2}, Proposition~8.2). In view of the irreducibility of
the $p ^{n}$-th cyclotomic polynomial over $\mathbb Q _{p}$, for
every $n \in \mathbb N$ (cf. \cite{I}, Ch. 8, Theorem~1), the papers
quoted in (1.2) (ii) give the following necessary conditions for
standard admissibility of $\widetilde P$; in view of (1.1), (1.2)
and (1.3), these conditions are sufficient in case $p > 2$ (see also
\cite{E3}, Theorem~7.3):
\par
\medskip
(1.4) With notations being as in (1.3) (ii), $\widetilde P \cong P
_{d,\theta }$, where $\theta \ge 2$, $d \ge 3$ and $d - 2$ is
divisible by $(p - 1)p ^{(\theta - 2)}$.
\par
\medskip
These results have been extended by Efrat \cite{E1,E2} to the case
of $p$-Henselian fields containing a primitive $p$-th root of
unity (see \cite{MT}, for an earlier result of this kind
concerning the absolute Galois groups of arbitrary Henselian
fields). The purpose of this paper is to extend the scope of (1.2)
along the lines drawn by Efrat (see \cite{E1}, page 216). In order
to simplify the description of the obtained results, consider
first an arbitrary nontrivially valued field $(K, v)$. In what
follows, $O _{v}(K)$ and $\widehat K$ will be the valuation ring
and the residue field of $(K, v)$, respectively, and $K _{h(v)}$
will be a fixed Henselization of $K$ in $K _{\rm sep}$ relative to
$v$ (it is known that $K _{h(v)}$ is uniquely determined by $(K,
v)$, up-to a $K$-isomorphism, see \cite{E3}). Given a prime number
$p \neq {\rm char}(K)$, we denote by $\varepsilon $ some primitive
$p$-th root of unity in $K _{\rm sep}$, and by $G(K)$ the minimal
isolated subgroup of $v(K)$ containing $v(p)$. By definition,
$\Delta (v)$ is the following subgroup of $v(K)$: $\Delta (v) =
G(K)$ in case $\varepsilon \notin K _{h(v)}$; if $\varepsilon \in
K _{h(v)}$, $[K(\varepsilon )\colon K] = m$ and $v ^{\prime }$ is
a valuation of $K(\varepsilon )$ extending $v$, then $\Delta (v)$
is the minimal isolated subgroup of $v(K)$ containing the values
$v ^{\prime }((\varepsilon - c) ^{m})\colon \ c \in O _{v}(K)$, $c
\neq \varepsilon $. It is easy to see that $G(K) = \{0\}$ unless
char$(\widehat K) = p$, and that $\Delta (v)$ does not depend on
the choice of $v ^{\prime }$. With these notation, the main
results of this paper can be stated as follows:

\medskip
\begin{theo}
Let $(K, v)$ be a $p$-Henselian field with {\rm char}$(K) = 0$, {\rm
char}$(\widehat K) = p$ and $r(p)_{K} \in \mathbb N$. Then $\widehat
K$ is perfect and the following conditions hold:
\par
{\rm (i)} $\mathcal{G}(K(p)/K)$ is a free pro-$p$-group if and only
if $\varepsilon \notin K _{h(v)}$ or $\Delta (v) = p\Delta (v)$;
\par
{\rm (ii)} $\mathcal{G}(K(p)/K)$ is isomorphic to a topological
semidirect product $\mathbb Z _{p} ^{\kappa } \rtimes \Psi $, for
some $\kappa \in \mathbb N$ and a free pro-$p$-group $\Psi $, if and
only if $G(K) = pG(K)$, $\varepsilon \in K _{h(v)}$ and the group
$\Delta (v)/p\Delta (v)$ is of order $p ^{\kappa }$.
\end{theo}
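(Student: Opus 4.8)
The plan is to reduce to the Henselian case, separate $\mathcal{G}(K(p)/K)$ into an inertial part controlled by $\Delta (v)$ and an unramified part controlled by $\widehat K$, and then read off (i) and (ii) from the sizes of these two pieces. First I would invoke the $p$-Henselian hypothesis to identify $\mathcal{G}(K(p)/K)$ with $\mathcal{G}(K _{h(v)}(p)/K _{h(v)})$ and pass to the Henselization; since $r(p)_{K}$, the containment $\varepsilon \in K _{h(v)}$, the group $G(K)$ and the group $\Delta (v)$ are all computed inside $K _{h(v)}$, nothing is lost and I may assume $(K, v)$ Henselian.

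For the perfectness of $\widehat K$ I would argue from the finiteness of $r(p)_{K}$: for a Henselian field $K^{\ast }/K^{\ast p}$ surjects onto $\widehat K ^{\ast }/\widehat K ^{\ast p}$, and an imperfect field of characteristic $p$ has infinite $\widehat K ^{\ast }/\widehat K ^{\ast p}$, which would make $r(p)_{K}$ infinite. With $\widehat K$ perfect of characteristic $p$, its only $p$-extensions are of Artin--Schreier type, and the fact recalled in the Introduction that $\mathcal{G}(E(p)/E)$ is free pro-$p$ whenever $p = {\rm char}(E)$ shows that $\mathcal{G}(\widehat K(p)/\widehat K)$ is free pro-$p$.

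Next I would set up the inertia exact sequence
$$1 \lra N \lra \mathcal{G}(K(p)/K) \lra \mathcal{G}(\widehat K(p)/\widehat K) \lra 1,$$
where $N$ is the Galois group of $K(p)$ over its maximal unramified $p$-subextension, and compute $N$. If $\varepsilon \notin K _{h(v)}$, the absence of a primitive $p$-th root of unity blocks every Kummer ramified tower, so $N$ is trivial and $\mathcal{G}(K(p)/K) \cong \mathcal{G}(\widehat K(p)/\widehat K)$ is free; this is the Shafarevich-type case of (1.2). If $\varepsilon \in K _{h(v)}$, the key computation is that $N \cong \mathbb Z _{p}^{\kappa }$, where $p ^{\kappa }$ is the order of $\Delta (v)/p\Delta (v)$: each independent class in $\Delta (v)/p\Delta (v)$ yields a ramified $\mathbb Z _{p}$-tower obtained by adjoining $p ^{n}$-th radicals of a suitable element, and the definition of $\Delta (v)$ through the values $v ^{\prime }((\varepsilon - c)^{m})$ is exactly what measures which part of $v(K)$ is seen by these radicals. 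In particular $N$ is trivial precisely when $\Delta (v) = p\Delta (v)$, which together with the previous case gives the equivalence in (i).

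Finally, for (ii) I would note that, $\mathcal{G}(\widehat K(p)/\widehat K)$ being free pro-$p$ and hence projective in the category of pro-$p$-groups, the above extension splits; writing $\Psi \cong \mathcal{G}(\widehat K(p)/\widehat K)$ for a complement one obtains $\mathcal{G}(K(p)/K) \cong \mathbb Z _{p}^{\kappa } \rtimes \Psi$ with the action induced by the cyclotomic character on the radicals generating $N$. The role of the hypothesis $G(K) = pG(K)$ is to keep $N$ torsion-free and to suppress a Demushkin relation: the value $v(p)$ lies in $G(K) \subseteq \Delta (v)$, and if $G(K) \neq pG(K)$ the class of $v(p)$ couples the inertia generator coming from $p$ to the cyclotomic action and produces a single defining relation, forcing $\mathcal{G}(K(p)/K)$ to be a Demushkin group rather than a semidirect product with free complement. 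I expect the main obstacle to be precisely this point---showing that $G(K) = pG(K)$ removes the coupling, equivalently that it forces $H ^{2}(\mathcal{G}(K(p)/K), \mathbb F _{p})$ to vanish on the relevant classes---which requires the careful analysis of $\Delta (v)$ against $G(K)$ and of the wild ramification filtration encoded in the definition of $\Delta (v)$.
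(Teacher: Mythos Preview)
Your inertia exact sequence $1 \to N \to \mathcal{G}(K(p)/K) \to \mathcal{G}(\widehat K(p)/\widehat K) \to 1$ is set up correctly, but your identification of $N$ is where the argument breaks. You assert that $N$ is trivial when $\varepsilon \notin K_{h(v)}$ and $N \cong \mathbb Z_p^{\kappa}$ when $\varepsilon \in K_{h(v)}$; both fail because ${\rm char}(\widehat K) = p$, so $N$ carries \emph{wild} ramification that Kummer theory does not see. Already for $K = \mathbb Q_p$ with $p$ odd one has $\varepsilon \notin K$, yet $r(p)_K = 2$ while $r(p)_{\mathbb F_p} = 1$, so $N \neq 1$ (the cyclotomic $\mathbb Z_p$-tower is totally wildly ramified and lies in $K(p)$). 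For $K = \mathbb Q_p(\varepsilon)$ one has $r(p)_K = p+1$, $r(p)_{\widehat K} = 1$ and $|\Delta(v)/p\Delta(v)| = p$, so your recipe would give rank $2$. What you have written is essentially the tame decomposition---the paper's Lemma~6.1, which explicitly requires ${\rm char}(\widehat K) \neq p$---transplanted into the wild setting where it no longer holds.

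The paper avoids this by inserting the intermediate residue field $K_{G(K)}$, which has characteristic $0$. Lemma~6.1 is applied to the \emph{coarse} valuation $v_{G(K)}$ (whose residue field is $K_{G(K)}$, of characteristic $\neq p$), and via Lemma~6.2 this yields $\mathcal{G}(K(p)/K) \cong \mathbb Z_p^{\kappa} \rtimes \mathcal{G}(K_{G(K)}(p)/K_{G(K)})$ with $\kappa$ read off from $\Delta(v)/p\Delta(v)$. The real work then goes into the inner factor: Proposition~5.1 shows $\mathcal{G}(K_{G(K)}(p)/K_{G(K)})$ is free when $G(K) = pG(K)$, and Lemma~4.3 handles $G(K) \neq pG(K)$. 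Thus the free complement $\Psi$ in (ii) is $\mathcal{G}(K_{G(K)}(p)/K_{G(K)})$, not $\mathcal{G}(\widehat K(p)/\widehat K)$; both are free pro-$p$, but their ranks differ precisely by the wild contribution you have suppressed. Two smaller gaps: your perfectness argument via $K^{\ast}/K^{\ast p} \twoheadrightarrow \widehat K^{\ast}/\widehat K^{\ast p}$ only bounds $r(p)_K$ when $\varepsilon \in K$, and the paper's Lemma~4.1 needs a separate construction through $K_{\varepsilon}$ and Albert's theorem when $\varepsilon \notin K$; and the opening passage to $K_{h(v)}$ is not automatic from $p$-Henselianity---one must still verify $K(p)\cdot K_{h(v)} = K_{h(v)}(p)$, which the paper does case by case.
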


\medskip
\begin{theo}
Let $(K, v)$ be a $p$-Henselian field, such that {\rm char}$(K) =
0$, {\rm char}$(\widehat K) = p$ and $G(K) \neq pG(K)$. Then
$r(p)_{K} \in \mathbb N$ if and only if $\widehat K$ is finite,
$G(K)$ is cyclic and, in case $\varepsilon \in K _{h(v)}$, the group
$\Delta (v)/p\Delta (v)$ is finite. When this holds, the following
conditions are equivalent:
\par
{\rm (i)} $\mathcal{G}(K(p)/K) \cong \mathbb Z _{p} ^{\kappa -1}
\rtimes \Psi $, where $\kappa \in \mathbb N$ and $\Psi $ is a
standardly admissible Demushkin group with $r(\Psi ) \ge 3$;
\par
{\rm (ii)} $\varepsilon \in K _{h(v)}$ and $\Delta (v)/p\Delta (v)$
is of order $p ^{\kappa }$.
\par\noindent
In particular, $\mathcal{G}(K(p)/K)$ is a Demushkin group if and
only if $\varepsilon \in K _{h(v)}$ and the group $\Delta (v)/G(K)$
is $p$-divisible.
\end{theo}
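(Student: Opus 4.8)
The plan is to deduce the final biconditional from the already-established equivalence (i) $\Leftrightarrow$ (ii), from Theorem~1(i), and from two auxiliary facts: a group-theoretic one pinning down when the semidirect product of (i) is Demushkin, and an elementary one about value groups. Write $P = \mathcal{G}(K(p)/K)$ and $C = \Delta(v)/G(K)$. Under the standing hypotheses, $G(K) \neq pG(K)$ forces $G(K)$ to be an infinite cyclic convex subgroup of $v(K)$, and $G(K) \subseteq \Delta(v)$ (built into the definitions, via $v(p) = (p-1)\,v'(\varepsilon-1)$ and convexity of $\Delta(v)$). First I would dispose of the case $\varepsilon \notin K_{h(v)}$: then $\Delta(v) = G(K)$, so $C = 0$ is vacuously $p$-divisible but the right-hand side fails for want of $\varepsilon \in K_{h(v)}$, while Theorem~1(i) makes $P$ a free pro-$p$-group, hence not Demushkin (its $H^2(\,\cdot\,,\mathbb{F}_p)$ vanishes); both sides are false. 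So I may assume $\varepsilon \in K_{h(v)}$, in which case (ii) holds with $\Delta(v)/p\Delta(v)$ of some order $p^{\kappa}$, and (i) gives $P \cong \mathbb{Z}_p^{\kappa-1} \rtimes \Psi$ with $\Psi$ Demushkin of rank $\ge 3$.

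The group-theoretic step is the claim that $\mathbb{Z}_p^{\kappa-1} \rtimes \Psi$ is Demushkin if and only if $\kappa = 1$. The ``if'' part is immediate, as then $P = \Psi$. For ``only if'', a Demushkin group is a Poincar\'e-duality group of dimension $2$, so has $\mathrm{cd}_p = 2$; but for $N = \mathbb{Z}_p^{\kappa-1}$ one has $H^{\kappa-1}(N,\mathbb{F}_p) \cong \mathbb{F}_p$ as a one-dimensional $\Psi$-module, and since $\Psi$ is itself a $\mathrm{PD}^2$-group a coinvariant computation (a one-dimensional $\mathbb{F}_p$-module over a pro-$p$-group is trivial, so its invariants are nonzero) yields $H^2(\Psi, H^{\kappa-1}(N,\mathbb{F}_p)) \neq 0$. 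By the standard criterion (Serre) for equality in the cohomological dimension of an extension, $\mathrm{cd}_p(P) = (\kappa-1) + 2 = \kappa + 1$, which exceeds $2$ as soon as $\kappa \ge 2$; hence $P$ fails to be Demushkin unless $\kappa = 1$. I expect this to be the main obstacle, since it requires the duality properties of $\Psi$ together with a cohomological-dimension computation for the semidirect product rather than a one-line manipulation.

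It remains to translate $\kappa = 1$ into $p$-divisibility of $C$. Because $G(K)$ is convex in $\Delta(v)$, the quotient $C = \Delta(v)/G(K)$ inherits a total order and is therefore torsion-free, so $\mathrm{Tor}_1^{\mathbb{Z}}(C,\mathbb{Z}/p) = C[p] = 0$; tensoring $0 \to G(K) \to \Delta(v) \to C \to 0$ with $\mathbb{Z}/p$ then yields an exact sequence $0 \to G(K)/pG(K) \to \Delta(v)/p\Delta(v) \to C/pC \to 0$. As $G(K)/pG(K) \cong \mathbb{Z}/p$, this gives $\abs{\Delta(v)/p\Delta(v)} = p\cdot\abs{C/pC}$, so $\kappa = 1$ exactly when $C/pC = 0$, i.e. when $\Delta(v)/G(K)$ is $p$-divisible. (As a byproduct, the injectivity of $G(K)/pG(K) \hookrightarrow \Delta(v)/p\Delta(v)$ shows $\Delta(v) \neq p\Delta(v)$ automatically, which is why in this setting Theorem~1(i) makes freeness equivalent to $\varepsilon \notin K_{h(v)}$.) Combining the two steps closes both directions: $P$ is Demushkin $\Leftrightarrow$ $\varepsilon \in K_{h(v)}$ and $\kappa = 1$ $\Leftrightarrow$ $\varepsilon \in K_{h(v)}$ and $\Delta(v)/G(K)$ is $p$-divisible.
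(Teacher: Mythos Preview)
Your argument is correct. The reduction to the case $\varepsilon\in K_{h(v)}$ via Theorem~1.1(i) is exactly what is needed, and both auxiliary steps --- the cohomological-dimension computation and the Tor exact sequence --- are sound. In particular, your observation that the one-dimensional $\mathbb{F}_p[\Psi]$-module $H^{\kappa-1}(N,\mathbb{F}_p)$ is necessarily trivial (since $\Psi$ is pro-$p$ and $\abs{\mathbb{F}_p^\times}=p-1$) is the right way to see that the corner term $E_2^{2,\kappa-1}$ of the Lyndon--Hochschild--Serre spectral sequence survives, giving $\mathrm{cd}_p(P)=\kappa+1$.

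The paper's route is slightly different on both points. For the group-theoretic step, the paper relies on Remark~3.5 rather than a spectral-sequence computation: it shows that a Demushkin (or free) pro-$p$-group $\Phi$ of rank $\ge 3$ has no nontrivial closed abelian normal subgroup, so a decomposition $\mathbb{Z}_p^{m}\rtimes\Phi$ determines $m$ uniquely; writing a Demushkin $P$ as $\mathbb{Z}_p^{0}\rtimes P$ then forces $\kappa-1=0$. For the value-group step, the paper in effect works directly with $\tau(p)=\dim_{\mathbb{F}_p}\bigl((\Delta(v)/G(K))/p(\Delta(v)/G(K))\bigr)$ via Lemma~6.1(iii) applied to $(K,v_{G(K)})$ and Lemma~6.2, so the identification ``$\tau(p)=0\iff\Delta(v)/G(K)$ is $p$-divisible'' is immediate and no Tor argument is needed. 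Your exact sequence recovers the relation $\kappa=\tau(p)+1$ cleanly from the statement of (ii) alone, which has the advantage of not revisiting the proof of the equivalence (i)$\Leftrightarrow$(ii). Either approach is short; yours is perhaps more self-contained given only the theorem statement, while the paper's avoids spectral sequences in favor of a structural property of Demushkin groups.
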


\medskip
It is known that if $(E, w)$ is a valued field and $\Omega /E$ is a
Galois extension, then the Galois group $\mathcal{G}(\Omega /E)$
acts transitively on the set $w(\Omega /E)$ of valuations of $\Omega
$ extending $w$, whose value groups are included in a fixed
divisible hull of $w(E)$. Therefore, the decomposition groups of
$\mathcal{G}(\Omega /E)$ attached to $w$, i.e. the stabilizers
Stab$(w ^{\prime }) = \{\psi \in \mathcal{G}(\Omega /E)\colon \ w
^{\prime } \circ \psi = w ^{\prime }\}$, where $w ^{\prime }$ runs
across $w(\Omega /E)$, form a conjugacy class in $\mathcal{G}(\Omega
/E)$. Recall that each admissible pro-$p$-group $P$ is isomorphic to
$\mathcal{G}(L(p)/L)$, for some Henselian field $(L, \omega )$ (and
thereby is realizable as a decomposition group). Indeed, it is known
(see, e.g., \cite{Er}, Ch. 4, Sects. 6 and 7) that for each field
$F$ and ordered abelian group $\Gamma $ there exists a Henselian
field $(K, v)$ with $\widehat K \cong F$ and $v(K) = \Gamma $, so
the noted property of $P$ can be obtained from (2.3) (ii) and (2.7).
Note also that, by the Endler-Engler-Schmidt theorem, see \cite{EE},
for any nontrivial valuation $\omega ^{\prime }$ of $L$, which is
independent of $\omega $, the decomposition group of
$\mathcal{G}(L(p)/L)$ attached to $\omega $ is trivial. Our third
main result proves that if $P$ is an admissible Demushkin group and
$3 \le r(P) < \infty $, then its nontrivial proper subgroups are not
realizable as decomposition groups.

\medskip
\begin{theo}
Let $E$ be a field with a nontrivial Krull valuation $w$. Assume
that $v$ is not $p$-Henselian, $\mathcal{G}(E(p)/E)$ is a Demushkin
group, $r(p)_{E} \in \mathbb N$ and $r(p)_{E} \ge 3$. Then $E(p)$ is
included in any Henselization $E _{h(v)} \subseteq E _{\rm sep}$.
\end{theo}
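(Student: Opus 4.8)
The plan is to restate the conclusion as the triviality of a decomposition group and then to combine the Poincar\'e-duality structure of $P:=\mathcal{G}(E(p)/E)$ with Theorems 1.1 and 1.2. Since $E_{h(v)}$ is an immediate extension of $E$, the field $M:=E(p)\cap E_{h(v)}$ is the $p$-Henselization of $(E,v)$: it is immediate over $E$ (so $\widehat M=\widehat E$ and $v(M)=v(E)$), the induced valuation $v|_M$ is $p$-Henselian, and $M(p)=E(p)$, whence the decomposition group attached to $v$ is $D:=\mathcal{G}(E(p)/M)=\mathcal{G}(M(p)/M)$. Now $E(p)\subseteq E_{h(v)}$ holds exactly when $M=E(p)$, i.e. when $D=\{1\}$, while $v$ not being $p$-Henselian means $M\ne E$, i.e. $D\ne P$. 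Thus the assertion is equivalent to: a proper decomposition subgroup of a Demushkin group $P$ with $3\le r(P)<\infty$ is trivial, which is what I would prove.

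First I would record the structural dichotomy. A Demushkin group is a Poincar\'e-duality pro-$p$-group of dimension $2$; hence its open subgroups are again Demushkin, whereas its closed subgroups of infinite index have cohomological dimension $\le 1$ and so are free pro-$p$-groups. In parallel, I would apply Theorems 1.1 and 1.2 to the $p$-Henselian field $(M,v|_M)$, the main case being $\mathrm{char}\,\widehat E=p$ (forcing $\mathrm{char}\,E=0$); by immediacy the invariants $\varepsilon\in E_{h(v)}$, $G(E)$ and $\Delta(v)$ that govern those theorems are computed from $(E,v)$ itself. The theorems present $D$ as free, as Demushkin, or as a semidirect product $\ZZ_p^{\kappa}\rtimes\Psi$ with $\Psi$ free, or $\ZZ_p^{\kappa-1}\rtimes\Psi$ with $\Psi$ Demushkin of rank $\ge 3$. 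The semidirect alternatives with $\kappa\ge 2$ have cohomological dimension $\ge 3$ and so cannot embed in $P$ at all, while the case $\ZZ_p\rtimes\Psi$ with $\Psi$ free has dimension $2$ but is not a Poincar\'e-duality group, so it is neither open (not Demushkin) nor of infinite index (not free). Hence $D$ is either Demushkin and open, or free pro-$p$ and of infinite index.

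I would then eliminate the open alternative. If $D$ is Demushkin, the Demushkin case of Theorem 1.2 forces $\widehat M$ to be finite, $G(M)$ cyclic and $\Delta(v)/G(M)$ $p$-divisible; by immediacy $\widehat E$ is finite and $G(E)$ is an infinite cyclic group containing $v(p)$. Thus $(E,v)$ carries exactly the invariants of a local field, yet $v$ is assumed not $p$-Henselian. A non-$p$-Henselian valuation with finite residue field and value group of this local type makes $E$ behave globally, so that $\mathcal{G}(E(p)/E)$ is infinitely generated; this contradicts $r(p)_E<\infty$. (Equivalently, one invokes the rigidity extracted from Theorem 1.2 that finite $r(p)_E$ together with $G(E)\ne pG(E)$ forces $v$ to be $p$-Henselian.) Hence $D$ is not open, so $D$ is free pro-$p$ of infinite index.

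It remains to show that a free decomposition group is trivial, and this is the main obstacle, since a Demushkin group of rank $\ge 3$ has an abundance of nontrivial free closed subgroups of infinite index and only the valuation-theoretic origin of $D$ can exclude them. First, $P$ Demushkin forces $\varepsilon\in E$ (the nondegenerate cup product $H^1(P,\FF_p)\times H^1(P,\FF_p)\to H^2(P,\FF_p)\cong\FF_p$ identifies $H^2$ with $\mathrm{Br}(E)[p]$, which requires $\mu_p\subseteq E$); so $\varepsilon\in E_{h(v)}$, and by Theorem 1.1(i) freeness of $D$ then yields $\Delta(v)=p\Delta(v)$, whence the inertia subgroup of $D$ is trivial and $D\cong\mathcal{G}(\widehat E(p)/\widehat E)$. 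The task thus reduces to proving that the residue field $\widehat E$ admits no nontrivial $p$-extension, i.e. that the free group $D$ is trivial. The plan for this final and hardest step is to feed the nondegeneracy of the cup product on $H^1(P,\FF_p)$ into the inflation--restriction sequence attached to $M/E$: a nontrivial free quotient $D\twoheadrightarrow\ZZ_p$ would produce, via $\varepsilon$ together with an unramified class in $H^1(P,\FF_p)$, an isotropic configuration incompatible with the perfect pairing of the Poincar\'e-duality group $P$, forcing $D=\{1\}$. I expect the delicate point to be controlling the restriction $H^1(P,\FF_p)\to H^1(D,\FF_p)$ precisely enough to contradict nondegeneracy unless $D$ is trivial; once this is done, $D=\{1\}$ gives $E(p)\subseteq E_{h(v)}$.
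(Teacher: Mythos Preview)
Your strategy of analyzing the decomposition group $D=\mathcal{G}(E(p)/M)$ via Theorems~1.1 and~1.2 is natural, but several steps do not go through as written, and the approach diverges substantially from the paper's, which is a direct norm-theoretic argument (building the tower $E_n$ of (7.1), showing $E_n\subseteq E'$ for every~$n$ via approximation and the structure of $N(E_{n+1}(\varepsilon)/E_n(\varepsilon))\cap E_{n,\varepsilon}$, and reducing the non-Archimedean case to the Archimedean one through (7.2)--(7.3) and Lemma~7.2).

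The first concrete error is the claim that ``$P$ Demushkin forces $\varepsilon\in E$''. This is false: Proposition~6.6 exhibits, for every standardly admissible Demushkin $P$ with $p>2$ and every divisor $m$ of $p-1$, a field $E$ with $\mathcal{G}(E(p)/E)\cong P$ and $[E(\varepsilon):E]=m$. The Demushkin condition is on $H^2(P,\mathbb F_p)$, not on $_p\mathrm{Br}(E)$; the identification you invoke requires $\varepsilon\in E$, so the implication runs the wrong way. Consequently your route to $\Delta(v)=p\Delta(v)$ and to ``$D\cong\mathcal{G}(\widehat E(p)/\widehat E)$'' collapses.

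The second gap is the elimination of the open alternative. Applying Theorem~1.2 to $(M,v|_M)$ yields, in the relevant case, that $\widehat M$ is finite and $G(M)$ is cyclic; by immediacy the same holds for $E$. But the assertion that a non-$p$-Henselian valuation with these invariants forces $r(p)_E=\infty$ (``$E$ behaves globally'') is unjustified --- Theorem~1.2 says nothing about non-$p$-Henselian fields, and there is no general principle of that form. In the paper this step (that $[E':E]=\infty$, hence $D$ is not open) is \emph{not} structural: in the Archimedean case it is obtained from the Grunwald--Wang theorem (two independent $p$-Henselian prolongations on a finite Galois extension cannot coexist), and in general via the reduction (7.2)--(7.3).

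Finally, even granting that $D$ is free of infinite index, your plan for showing $D=\{1\}$ is only a hope. You correctly note that Demushkin groups of rank $\ge 3$ have many nontrivial free closed subgroups; the cup-product nondegeneracy on $H^1(P,\mathbb F_p)$ alone cannot single out $D$ among them. The paper's mechanism for forcing $D$ trivial is valuation-theoretic and explicit: one produces, via the Approximation Theorem and norm transitivity, elements of $E_\varepsilon$ in $N(L(\varepsilon)/E(\varepsilon))\cdot\Sigma(E)$ for every finite $L/E$ in $E(p)$, and then uses $N(E_{n+1}(\varepsilon)/E_n(\varepsilon))\cap E_{n,\varepsilon}\subseteq E_n(\varepsilon)^{\ast p}$ (a consequence of Proposition~3.4 and (3.5)) to push $E_n$ into $E'$. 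Your ``isotropic configuration'' sketch does not supply a substitute for this.
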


\medskip
The main results of this paper enable one to show (see Remarks 7.3
and 5.3) that admissible Demushkin pro-$p$-groups of rank $\ge 3$
will be standardly admissible, if the considered ground fields
contain primitive $p$-th roots of unity, and the following open
problem has an affirmative solution:
\par
\medskip
(1.5) Let $F$ be a field, such that $r(p)_{F} \ge 1$ and $F(p)
\subseteq F _{h(\omega )}$, for each nontrivial Krull valuation
$\omega $ of $F$, and suppose that the transcendency degree of $F$
over its prime subfield is finite. Is $\mathcal{G}(F(p)/F)$ a free
pro-$p$-group?
\par
\medskip
It is known (see \cite{Ta}, page 265, and \cite{S1}, Ch. I, 4.2)
that if $F$ is a field with a primitive $p$-th root of unity, then
$\mathcal{G}(F(p)/F)$ is a free pro-$p$-group if and only if
$r(p)_{F} \ge 1$ and the $p$-component Br$(F) _{p}$ of the Brauer
group Br$(F)$ is trivial. When $F(p) = F _{\rm sep}$ and $p \neq
{\rm char}(F)$, the fulfillment of (1.5) implies that Br$(F) =
\{0\}$ in all presently known cases, since then $F$ turns out to be
pseudo algebraically closed, i.e. each geometrically irreducible
affine variety defined over $F$ has an $F$-rational point (see
\cite{FJ}, Theorem~10.17 and Problem~11.5.9, and \cite{S2}, Ch. II,
3.1).

\medskip
The basic notation, terminology and conventions kept in this paper
are standard and virtually the same as in \cite{Ch3}. Throughout,
$\mathbb P$ denotes the set of prime numbers, Brauer and value
groups are written additively, Galois groups are viewed as profinite
with respect to the Krull topology, and by a profinite group
homomorphism, we mean a continuous one. For any field $E$, $E ^{\ast
}$ denotes its multiplicative group, $E ^{\ast n} = \{a ^{n}\colon \
a \in E ^{\ast }\}$, for each $n \in \mathbb N$, $\mathcal{G}_{E} =
\mathcal{G}(E _{\rm sep}/E)$ stands for the absolute Galois group of
$E$, and for each $p \in \mathbb P$, $_{p} {\rm Br}(E) = \{b _{p} \in
{\rm Br}(E)\colon \ pb _{p} = 0\}$ and $P(E) = \{p \in \mathbb
P\colon \ E(p) \neq E\}$. We denote by $s(E)$ the class of
finite-dimensional simple $E$-algebras, $d(E)$ is the subclass of
division algebras from $s(E)$, and for each $A \in s(E)$, $[A]$ is
the similarity class of $A$ in Br$(E)$. As usual, Br$(E ^{\prime
}/E)$ denotes the relative Brauer group of an arbitrary field
extension $E ^{\prime }/E$. We write $\rho _{E'/E}$ for the scalar
extension map of Br$(E)$ into Br$(E ^{\prime })$, and I$(E ^{\prime
}/E)$ for the set of intermediate fields of $E ^{\prime }/E$.

\medskip
The paper is organized as follows: Section 2 includes preliminaries
on Krull valuations used in the sequel. Section 3 contains
characterizations of free pro-$p$-groups and of Demushkin groups of
finite ranks in the class of Galois groups of maximal
$p$-extensions. The proof of Theorems 1.1 and 1.2 is divided into
three parts presented in Sections 4, 5 and 6. This allows us to
determine the structure of $\mathcal{G}(K(p)/K)$ by a reduction to
the special case where $K$ contains a primitive $p$-th root of unity
(see Remark 6.3). Specifically, it becomes clear (Corollary 6.4)
that a pro-$p$-group $P$ of rank $2$ is isomorphic to
$\mathcal{G}(K(p)/K)$, for some $p$-Henselian field $(K, v)$, if and
only if $P$ is a free pro-$p$-group or a Demushkin group. Theorem
1.3 is deduced from Theorems 1.1 and 1.2 in Section 7, where we also
present a description of the decomposition groups of Demushkin
groups of rank $2$.

\medskip

\section{\bf Preliminaries on Henselian $\Omega $-valuations}

Let $K$ be a field with a nontrivial valuation $v$, $O _{v}(K) = \{a
\in K\colon \ v(a) \ge 0\}$ the valuation ring of $(K, v)$, $M
_{v}(K) = \{\mu \in K\colon \ v(\mu ) > 0\}$ the maximal ideal of $O
_{v}(K)$, $v(K)$ and $\widehat K = O _{v}(K)/M _{v}(K)$ the value
group and the residue field of $(K, v)$, respectively. For each
$\gamma \in v(K)$, $\gamma \ge 0$, we denote by $\nabla _{\gamma
}(K)$ the set $\{\lambda \in K\colon \ v(\lambda - 1) > \gamma \}$.
As usual, the completion of $K$ relative to (the topology induced
by) $v$ is denoted by $K _{v}$, and is considered with its valuation
$\bar v$ continuously extending $v$. Whenever $v(E)$ is Archimedean,
i.e. it is embeddable as an ordered subgroup in the additive group
$\mathbb R$ of real numbers, we identify $v(E)$ with its isomorphic
copy in $\mathbb R$. In what follows, Is$_{v} (K)$ denotes the set
of isolated subgroups of $v(K)$ different from $v(K)$. It is
well-known (cf. \cite{B1}, Ch. VI, Sect. 4.3) that each $H \in {\rm
Is}_{v}(K)$ is a pure subgroup of $v(K)$, the ordering of $v(K)$
induces canonically on $v(K)/H$ a structure of an ordered group, and
one can naturally associate with $v$ and $H$ a valuation $v _{H}$ of
$K$ with $v _{H} (K) = v(K)/H$. Unless specified otherwise, $K _{H}$
will denote the residue field of $(K, v _{H})$, $\eta _{H}$ the
natural projection $O _{v _{H}}(K) \to K _{H}$, and $\hat v _{H}$
the valuation of $K _{H}$ induced canonically by $v$ and $H$. The
valuations $v$, $v _{H}$ and $\hat v _{H}$ are related as follows:
\par
\medskip
(2.1) (i) $\hat v _{H}(K _{H}) = H$, $\widehat K _{H}$ is isomorphic
to $\widehat K$ and $\eta _{H}$ induces a surjective homomorphism of
$O _{v}(K)$ upon $O _{\hat v _{H}}(K _{H})$ (cf. \cite{E3},
Proposition~5.2.1).
\par
(ii) For each $H \in {\rm Is}_{v}(K)$, $v _{H}$ induces on $K$ the
same topology as $v$; the mapping of Is$_{v}(K)$ on the set $V _{v}$
of proper subrings of $K$ including $O _{v}$, by the rule $X \to v
_{X}(K)$, $X \in {\rm Is}_{v}(K)$, is an inclusion-preserving
bijection.
\par
(iii) If $v(K)$ properly includes the union $H(K)$ of the groups $H
\in {\rm Is}_{v}(K)$, then $v(K)/H(K)$ is Archimedean (cf.
\cite{E3}, Theorem~2.5.2).
\par
\medskip
We say that the valuation $v$ is $\Omega $-Henselian, for a given
normal extension $\Omega /K$, if $v$ is uniquely, up-to an
equivalence, extendable to a valuation $v _{L}$ on each $L \in
I(\Omega /K)$. In order that $v$ is $\Omega $-Henselian, it is
necessary and sufficient that the Hensel-Rychlik condition holds
(cf. \cite{E3}, Sect. 18.1):
\par
\medskip
(2.2) Given a polynomial $f(X) \in O _{v}(K) [X]$, which fully
decomposes in $\Omega [X]$, and an element $a \in O _{v}(K)$, such
that $2v(f ^{\prime }(a)) < v(f(a))$, where $f ^{\prime }$ is the
formal derivative of $f$, there is a zero $c \in O _{v}(K)$ of $f$
satisfying the equality $v(c - a) = v(f(a)/f ^{\prime }(a))$.
\par
\medskip
If $v$ is $\Omega $-Henselian and $L \in I(\Omega /K)$, then $v
_{L}$ is $\Omega $-Henselian. In this case, we denote by $\widehat
L$ the residue field of $(L, v _{L})$, put $v(L) = v _{L}(L)$, and
write $L _{v}$ for the completion of $L$ with respect to $v _{L}$.
When $v(K)$ is non-Archimedean, the $\Omega $-Henselian property can
be also characterized as follows:
\par
\medskip
\begin{prop}
Let $(K, v)$ be a valued field, and let $H \in {\rm Is}_{v}(K)$.
Then $v$ is $\Omega $-Henselian if and only if $v _{H}$ is $\Omega
$-Henselian and $\hat v _{H}$ is $\widehat \Omega $-Henselian.
\end{prop}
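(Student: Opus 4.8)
The plan is to reformulate $\Omega$-Henselianity as a single unique-extension condition and then to analyze the extensions of $v$ to $\Omega$ by means of the classical correspondence for composite (decomposed) valuations. First I would record the reduction that, for a normal extension, a valuation of the base field is Henselian relative to it if and only if it admits a unique extension, up to equivalence, to the whole extension field. One implication is immediate, since the top field lies in its own set of intermediate fields; for the other, given an intermediate field $L$ and two extensions $u_1, u_2$ of the base valuation to $L$, one extends each to the top field (such extensions exist by Chevalley), obtains two extensions of the base valuation to the top field, which are equivalent by hypothesis, and restricts back to see that $u_1$ and $u_2$ are equivalent on $L$. Applying this both to $(K, v, \Omega)$ and to $(K_H, \hat v_H, \widehat\Omega)$ reduces the proposition to counting extensions to $\Omega$ and to $\widehat\Omega$ only.

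The key tool is the standard description of the extensions of the composite valuation $v$. Recall from (2.1) that $O_v(K) = \eta_H^{-1}(O_{\hat v_H}(K_H))$, so $v$ is the composite of its coarsening $v_H$ with the valuation $\hat v_H$ on the residue field $K_H$. I would invoke that the extensions of $v$ to $\Omega$, up to equivalence, are in bijection with the pairs $(w, \bar w)$, where $w$ ranges over the extensions of $v_H$ to $\Omega$ and, for each such $w$ with residue field $\Omega_w \supseteq K_H$, $\bar w$ ranges over the extensions of $\hat v_H$ to $\Omega_w$: the extension of $v$ attached to $(w, \bar w)$ is the composite of $w$ with $\bar w$, and conversely every extension $V$ of $v$ coarsens to an extension of $v_H$ and induces an extension of $\hat v_H$ on the corresponding residue field. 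Consequently the number of extensions of $v$ to $\Omega$ equals the sum, over the extensions $w$ of $v_H$, of the numbers of extensions of $\hat v_H$ to $\Omega_w$.

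With this at hand both implications are immediate. If $v$ is $\Omega$-Henselian, the above sum equals $1$; since $\hat v_H$ always admits at least one extension to each $\Omega_w$, there is exactly one $w$, whence $v_H$ is $\Omega$-Henselian, and for that unique $w$, whose residue field is then $\widehat\Omega$, there is exactly one extension of $\hat v_H$, whence $\hat v_H$ is $\widehat\Omega$-Henselian. Conversely, if $v_H$ is $\Omega$-Henselian with unique extension $w$ (so $\widehat\Omega = \Omega_w$) and $\hat v_H$ is $\widehat\Omega$-Henselian, then the sum reduces to a single summand equal to $1$, so $v$ extends uniquely to $\Omega$ and is $\Omega$-Henselian.

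The step I expect to be the main obstacle is the precise set-up of the composite-valuation correspondence in this relative setting: one must match the isolated subgroup $H$ of $v(K)$ with its convex hull in the value group of an extension $V$ of $v$ to $\Omega$, verify that coarsening $V$ by this hull restricts to $v_H$ on $K$ and that the induced residue valuation restricts to $\hat v_H$ on $K_H$ — here (2.1)(i), giving $\hat v_H(K_H)=H$ and $\widehat{K_H}\cong\widehat K$, is exactly what keeps the two levels aligned. Finally one must check that, once $v_H$ is $\Omega$-Henselian, the residue extension $\widehat\Omega/K_H$ is normal, so that the phrase ``$\hat v_H$ is $\widehat\Omega$-Henselian'' is meaningful; this follows by a standard argument from the fact that the unique extension $w$ is stable under $\mathrm{Aut}(\Omega/K)$, which therefore acts on $\Omega_w$ over $K_H$.
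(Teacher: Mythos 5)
Your argument is correct, but it follows a genuinely different route from the paper's. You first reduce $\Omega $-Henselianity to uniqueness of the extension to $\Omega $ alone (a harmless reformulation, valid by Chevalley's extension theorem together with the fact that equivalent valuations of $\Omega $ restrict to equivalent valuations of any $L \in I(\Omega /K)$), and then invoke the bijection between extensions of the composite valuation $v$ to $\Omega $ and pairs consisting of an extension $w$ of the coarsening $v _{H}$ together with an extension of $\hat v _{H}$ to the residue field $\Omega _{w}$; both implications then follow by counting. The paper instead argues directly with the Hensel--Rychlik criterion (2.2): for the ``if'' direction it lifts a simple root of a polynomial first from $\widehat \Omega $ to $K _{H}$ and then from $K _{H}$ to $K$, and for the ``only if'' direction it compares two extensions $w _{1}, w _{2}$ of $v _{H}$ to a finite subextension by showing the associated isolated subgroups $H _{1}, H _{2}$ coincide, playing off the torsion-freeness of $H _{2}/H _{1}$ against the bound on $[H _{j}\colon H]$ coming from Ostrowski's theorem --- which is essentially the same torsion versus torsion-free tension that underlies the uniqueness of the decomposition in your correspondence. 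Your approach is more uniform, since both directions fall out of a single bijection, at the price of setting up the composite-valuation correspondence carefully; you correctly isolate the two points needing care, namely that the convex hull of $H$ in the value group of an extension of $v$ meets $v(K)$ exactly in $H$ (which uses that the value-group extension is torsion over $v(K)$), and that $\widehat \Omega /K _{H}$ is normal so that the statement ``$\hat v _{H}$ is $\widehat \Omega $-Henselian'' is meaningful. On the latter point, note that normality of the residue extension holds for every extension of $v _{H}$ to the normal extension $\Omega $, not only for a unique one, by the standard fact that residue field extensions of normal extensions are normal; so your appeal to the uniqueness of $w$ there is not actually needed.
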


\begin{proof}
Suppose first that $v _{H}$ is $\Omega $-Henselian and $\hat v _{H}$
is $\widehat \Omega $-Henselian. Fix a monic polynomial $f(X) \in O
_{v}(K) [X]$, which fully decomposes over $\Omega $ and has a simple
zero $\beta \in O _{v}(K)$ modulo $M _{v}(K)$. Denote by $\hat f
_{H}$ the reduction of $f$ modulo $M _{v _{H}}(K)$. Then $\beta \in
O _{v _{H}}(K)$, $\hat f _{H}(X) \in O _{\hat v _{H}}(K _{H}) [X]$,
and the residue class $\hat \beta _{H} \in K _{H}$ is a simple zero
of $\hat f _{H}$ modulo $M _{\hat v _{H}}(K _{H})$. Hence, by the
$\widehat \Omega $-Henselian property of $\hat v _{H}$, $\hat f
_{H}$ has a simple zero $\tilde \alpha _{H} \in O _{\hat v _{H}}(K
_{H})$, such that $\hat \beta _{H} - \tilde \alpha _{H} \in M _{\hat
v _{H}}(K _{H})$. Now take a preimage $\alpha _{H}$ of $\tilde
\alpha _{H}$ in $O _{v _{H}}(K)$. The preceding observations
indicate that $f(\alpha _{H}) \in M _{v _{H}}(K)$ and $f ^{\prime
}(\alpha _{H}) \notin M _{v _{H}}(K)$. Since $v _{H}$ is $\Omega
$-Henselian, it follows from (2.2) that $O _{v _{H}}(K)$ contains a
simple zero $\alpha $ of $f$ with the property that $\alpha - \alpha
_{H} \in M _{v _{H}}(K)$. As $O _{v}(K)$ is an integrally closed
ring, it is now easy to see that $\alpha \in O _{v}(K)$ and $\alpha
- \beta \in M _{v}(K)$, which proves that $v$ is $\Omega
$-Henselian.
\par
Conversely, assume that $v$ is $\Omega $-Henselian, fix a finite
field extension $L/K$, denote by $O _{H}(L)$ the integral closure of
$O _{v _{H}}(K)$ in $L$, and let $w _{1}$ and $w _{2}$ be valuations
of $L$ extending $v _{H}$. The $\Omega $-Henselian property of $v$
ensures that $O _{v}(L)$ equals the integral closure of $O _{v}(K)$
in $L$. At the same time, one concludes that $O _{v}(L) \subseteq O
_{H}(L) \subseteq O _{w _{j}}(L)$ and $O _{w _{j}}(L) = O _{v _{L},H
_{j}}(L)$, for some $H _{j} \in {\rm Is}_{v _{L}}(L)$, $j = 1, 2$.
Since the set of valuation subrings of $L$ including $O _{v}(L)$ is
a chain with respect to set-theoretic inclusion, one can assume
without loss of generality that $O _{w _{1}}(L) \subseteq O _{w
_{2}}(L)$, whence $H _{1} \subseteq H _{2}$ (see \cite{B1},Ch. VI,
Sect. 4, Proposition~4). As $H _{j} \in {\rm Is}_{v _{L}}(L)$, $j =
1, 2$, this means that $H _{2}/H _{1}$ is a torsion-free group,
whereas Ostrowski's theorem implies that $H$ is a subgroup of $H
_{j}$ of index dividing $[L\colon K]$, for each index $j$.
Therefore, $H _{2}/H _{1}$ is a homomorphic image of $H _{2}/H$,
whence its order also divides $[L\colon K]$. The obtained results
prove that $H _{1} = H _{2}$ and $v _{H}$ is $\Omega $-Henselian. It
remains to be seen that $\hat v _{H}$ is $\widehat \Omega
$-Henselian. Let $\tilde h(X) = X ^{n} + \sum _{i=0} ^{n-1} \tilde a
_{i}X ^{n-i} \in O _{\hat v _{H}}(K _{H}) [X]$ have a simple zero
$\tilde \eta \in O _{\hat v _{H}}(K _{H})$ modulo $M _{\hat v _{H}}(K
_{H})$, and let $a _{i}$ be a preimage of $\tilde a _{i}$ in $O
_{\hat v _{H}}(K _{H})$, for $i = 1, \dots , n - 1$. Then $h(X) = X
^{n} + \sum _{i=0} ^{n-1}a _{i}X ^{n-i}$ lies in $O _{v}(K) [X]$ and
$O _{v}(K)$ contains any preimage $\eta $ of $\tilde \eta $ in $O
_{v _{H}}(K)$. Moreover, by the $\Omega $-Henselity of $v$, the
coset $\eta + M _{v}(K)$ contains a simple zero $\xi $ of $h$. This
implies $\tilde h(\hat \xi _{H}) = 0$, $\tilde h ^{\prime }(\hat \xi
_{H}) \neq 0$ and $\hat \xi _{H} - \tilde \eta \in M _{\hat v
_{H}}(K _{H})[X]$, which proves that $\hat v _{H}$ is $\widehat
\Omega $-Henselian.
\end{proof}

\medskip
A finite extension $R$ of $K$ in $\Omega $ is said to be inertial,
if $R$ has a unique, up-to an equivalence, valuation $v _{R}$
extending $v$, the residue field $\widehat R$ of $(R, v _{R})$ is
separable over $\widehat K$, and $[R\colon K] = [\widehat R\colon
\widehat K]$. When $v$ is $\Omega $-Henselian, these extensions have
the following frequently used properties (see \cite{JW}, page 135
and Theorems~2.8 and 2.9, for the case where $v$ is Henselian):
\par
\medskip
(2.3) (i) An inertial extension $R$ of $K$ in $\Omega $ is Galois if
and only if $\widehat R/\widehat K$ is Galois. When this holds,
$\mathcal{G}(R/K)$ and $\mathcal{G}(\widehat R/\widehat K)$ are
canonically isomorphic.
\par
(ii) The set of inertial extensions of $K$ in $\Omega $ is closed
under the formation of subextensions and finite compositums. The
compositum $\Omega _{0}$ of these extensions is Galois over $K$ with
$\mathcal{G}(\Omega _{0}/K) \cong \mathcal{G}(\widehat \Omega
/\widehat K)$.
\par
\medskip

\begin{prop}
Let $(K, v)$ be a nontrivially valued field and $(K _{h(v)}, \sigma
)$ a Henselization of $(K, v)$. Then the residue field of $(K
_{h(v)}, \sigma _{H})$ is isomorphic to $K _{H}$, for each $H \in
{\rm Is}_{v}(K)$. Moreover, if $v(p) \in H$, for some $p \in \mathbb
P$, then $K _{h(v)}$ contains a primitive $p$-th root of unity if
and only if so does $K _{H}$.
\end{prop}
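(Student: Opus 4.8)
The plan is to exploit that any Henselization $K_{h(v)}/K$ is an immediate extension, so that $\sigma (K_{h(v)}) = v(K)$ and $\widehat{K_{h(v)}} \cong \widehat K$. First I would note that $H$ then lies in ${\rm Is}_{\sigma }(K_{h(v)})$ and $\sigma _{H}(K_{h(v)}) = v(K)/H = v_{H}(K)$, while the inclusion $O_{v_{H}}(K) \subseteq O_{\sigma _{H}}(K_{h(v)})$ induces a field embedding $\iota \colon K_{H} \hookrightarrow \widetilde K_{H}$, where $\widetilde K_{H}$ denotes the residue field of $(K_{h(v)}, \sigma _{H})$. By (2.1)(i), applied to $(K, v)$ and to $(K_{h(v)}, \sigma )$, the induced valuations $\hat v_{H}$ and $\hat \sigma _{H}$ have value group $H$ and residue field $\cong \widehat K$; since $\iota $ is compatible with them (everything descends from $\sigma |_{K} = v$), the extension $\widetilde K_{H}/K_{H}$ is immediate. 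By the preceding proposition, $\sigma $ Henselian forces $\sigma _{H}$ and $\hat \sigma _{H}$ to be Henselian, so $\widetilde K_{H}$ is Henselian for $\hat \sigma _{H}$; as $K_{h(v)}/K$ is separable algebraic, so is $\widetilde K_{H}/K_{H}$, and I would conclude that $\widetilde K_{H}$ is a Henselization of $(K_{H}, \hat v_{H})$, which is the content of the first assertion.

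For the ``moreover'' part it is in fact enough to retain that $\widetilde K_{H}/K_{H}$ is immediate. Assume first that $\zeta \in K_{H}$ is a primitive $p$-th root of unity. Then $\iota (\zeta )$ is a root of $X^{p} - 1$ modulo $M_{\sigma _{H}}(K_{h(v)})$, and it is a \emph{simple} root because ${\rm char}\,\widetilde K_{H} = {\rm char}\,K = 0$ makes $X^{p} - 1$ separable. Since $\sigma _{H}$ is Henselian, (2.2) lifts this root to some $\xi \in O_{\sigma _{H}}(K_{h(v)}) \subseteq K_{h(v)}$ with $\xi ^{p} = 1$; its residue being a primitive $p$-th root of unity forces $\xi $ itself to be one, so $K_{h(v)}$ contains a primitive $p$-th root of unity.

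Conversely, suppose $\xi \in K_{h(v)}$ is a primitive $p$-th root of unity. Its $\sigma _{H}$-residue $\zeta $ is again a primitive $p$-th root of unity (characteristic $0$), so the intermediate field $K_{H}(\zeta )$ is contained in $\widetilde K_{H}$. Because $\widetilde K_{H}/K_{H}$ is immediate, the finite subextension $K_{H}(\zeta )/K_{H}$ is immediate as well, i.e. its ramification and residue degrees are both $1$. Here I use the hypothesis $v(p) \in H$ in the operative case ${\rm char}\,\widehat K = p$ (equivalently $v(p) > 0$): then $\hat v_{H}(p) = v(p) \in H$, so $[K_{H}(\zeta )\colon K_{H}]$ divides $p - 1$ and is prime to $p$. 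By Ostrowski's theorem the defect of $K_{H}(\zeta )/K_{H}$ is a power of the residue characteristic $p$; combined with ramification and residue degree $1$ and with $p \nmid [K_{H}(\zeta )\colon K_{H}]$, this forces $[K_{H}(\zeta )\colon K_{H}] = 1$, whence $\zeta \in K_{H}$, i.e. $K_{H}$ contains a primitive $p$-th root of unity.

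The main obstacle is this converse direction: one must locate the subextension $K_{H}(\zeta )$ inside the immediate extension $\widetilde K_{H}/K_{H}$ and exclude a nontrivial immediate (defect) extension. This is exactly where Ostrowski's theorem does the decisive work, the point being that the defect is a power of $p$ while $[K_{H}(\zeta )\colon K_{H}] \mid p - 1$ is prime to $p$. The one piece of care needed is the restriction to ${\rm char}\,\widehat K = p$, which is the setting in which the statement will be invoked; it is precisely this that guarantees the degree $[K_{H}(\zeta )\colon K_{H}]$ is prime to the residue characteristic and makes the defect argument conclusive.
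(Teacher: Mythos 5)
Your route is genuinely different from the paper's. The paper fixes a Henselization $\widetilde \Phi $ of $(K _{H}, \hat v _{H})$, lifts it inertially over a Henselization of $(K, v _{H})$ to a field $\Phi $, equips $\Phi $ with a composite valuation $\phi $ making $(\Phi , \phi )$ a Henselization of $(K, v)$ whose $\phi _{H}$-residue field is $\widetilde \Phi $, and then invokes uniqueness of Henselizations (\cite{E3}, Theorem~15.3.5); the ``moreover'' is then read off from the Henselian root-lifting criterion. You instead work with the given $K _{h(v)}$ directly, establish that $\widetilde K _{H}/K _{H}$ is immediate, and prove the ``moreover'' by Hensel lifting in one direction and an Ostrowski defect argument in the other. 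Note that what both arguments actually yield for the first assertion is that the residue field of $(K _{h(v)}, \sigma _{H})$ is a Henselization of $(K _{H}, \hat v _{H})$ --- your reading --- rather than literally $K _{H}$ as the statement says; that discrepancy is in the paper, not in your proposal.

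Two concrete problems remain. First, your derivation of the first assertion does not close: ``immediate, separable algebraic and Henselian over $(K _{H}, \hat v _{H})$'' does not characterize the Henselization. Precisely when the residue characteristic is positive --- the case the paper needs --- a Henselian field can admit proper immediate separable algebraic (defect) extensions, and any such extension of $K _{H,h(\hat v _{H})}$ also has all three of your properties; you need the minimality or universal property of the Henselization (a decomposition-field argument, or the explicit construction the paper carries out), and this is missing. The gap does not infect the ``moreover'', which, as you say, uses only immediacy. Second, the characteristic bookkeeping: you justify the separability of $X ^{p} - 1$ over $\widetilde K _{H}$, and the primitivity of the residue of $\xi $, by ``${\rm char}\,\widetilde K _{H} = {\rm char}\,K = 0$''. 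Neither equality is available: no characteristic hypothesis is made on $K$, and ${\rm char}\,K _{H}$ is a residue characteristic that need not equal ${\rm char}\,K$. The fact you need is ${\rm char}\,K _{H} \neq p$, and that is exactly what $v(p) \in H$ provides (it says $v _{H}(p) = 0$, so $p$ is a unit in $O _{v _{H}}(K)$); you invoke $v(p) \in H$ only later and for a different purpose. With that repair your forward direction is fine, and your Ostrowski argument for the converse is correct when ${\rm char}\,\widehat K \in \{0, p\}$, since only then is the defect (a power of ${\rm char}\,\widehat K$) automatically prime to $p - 1$; your observation that the converse genuinely requires this restriction is a fair one, and it is a point the paper's one-line deduction of the ``moreover'' from the first assertion does not confront.
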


\medskip
\begin{proof}
Fix Henselizations $(K _{h(v _{H})}, \omega _{H})\colon \ K _{h(v
_{H})} \subseteq K _{\rm sep}$, and $(\widetilde \Phi , \tilde v
_{H})\colon \ \widetilde {\Phi } \subseteq K _{H,{\rm sep}}$, of
$(K, v _{H})$ and $(K _{H}, \hat v _{H})$, respectively, and denote
by $\Sigma (H)$ the compositum of the inertial extensions of $K
_{h(v _{H})}$ in $K _{\rm sep}$ relative to $\sigma _{H}$. Also, let
$\Phi $ be the preimage of $\widetilde {\Phi }$ under the bijection
$I(\Sigma (H))/K _{h(v _{H})}) \to I(\widehat K _{\rm sep}/\widehat
K)$, canonically induced by the natural homomorphism of $O _{\omega
_{H}}(\Sigma (H))$ into $\widehat K _{\rm sep}$, and let $\varphi
_{H}$ be the valuation of $\Phi $ extending $\omega _{H}$. It
follows from Proposition 2.1 and \cite{E3}, Theorem~15.3.5, that $(K
_{h(v _{H})}, \omega _{H})$ can be chosen so that $K _{h(v _{H})}
\subseteq K _{h(v)}$ and $\omega _{H}$ is induced by $\sigma _{H}$.
It is easily verified that $\widetilde \Phi $ is the residue field
of $(\Phi , \varphi _{H})$ and there exists a valuation $\phi $ of
$\Phi $, such that $\phi _{H} = \varphi _{H}$ and $\hat \phi _{H} =
\tilde v _{H}$. Moreover, it follows from Proposition 2.1, the
definition of $\widetilde \Phi $ and the observation concerning $(K
_{v _{H}}, \omega _{H})$ that $(\Phi , \phi )$ is a Henselization of
$(K, v)$. In view of \cite{E3}, Theorem~15.3.5, this proves the
former assertion of Proposition 2.2. The rest of our proof relies on
the well-known fact that a field $F$ with a Henselian valuation $f$
contains a primitive $p$-th root of unity, for a given $p \in
\mathbb P$, $p \neq {\rm char}(\widehat F)$, if and only if
$\widehat F$ contains such a root. When $v(p) \in H$, this applies
to $(\Phi , \varphi _{H})$, so the latter assertion of Proposition
2.2 can be viewed as a consequence of the former one.
\end{proof}

\medskip
The following lemma plays a major role in the study of
$\mathcal{G}(K(p)/K)$ when $(K, v)$ is $p$-Henselian (i.e.
$K(p)$-Henselian, see also (2.4)). For convenience of the reader, we
prove the lemma here (referring to \cite{LR}, for a much more general
result in the case where $K$ is dense in its Henselizations).

\medskip
\begin{lemm}
Let $(K, v)$ be a valued field and $\widetilde F$ an extension of
$\widehat K$ in $\widehat K(p)$ of degree $p \in \mathbb P$. Then
$\widetilde F$ has an inertial lift in $K(p)$ over $K$, i.e. there
exists an inertial extension $F$ of $K$ in $K(p)$, such that
$\widehat F \cong \widetilde F$ over $\widehat K$.
\end{lemm}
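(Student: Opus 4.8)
The plan is to reduce to the case of a cyclic residue extension, lift a defining equation, and then force the lift inside $K(p)$. Since $\widetilde F \subseteq \widehat K(p)$ and $[\widetilde F\colon \widehat K] = p$, the subgroup $\mathcal{G}(\widehat K(p)/\widetilde F)$ has index $p$ in the pro-$p$-group $\mathcal{G}(\widehat K(p)/\widehat K)$, hence is normal; thus $\widetilde F/\widehat K$ is cyclic of degree $p$. I would write $\widetilde F = \widehat K(\widetilde\theta )$ with $\widetilde\theta $ a root of a monic separable irreducible $\bar g \in \widehat K[X]$ of degree $p$, and lift $\bar g$ to a monic $g \in O _{v}(K)[X]$ by lifting its coefficients through the projection $O _{v}(K) \to \widehat K$.

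Next I would check that such a lift is inertial. A nontrivial monic factorization of $g$ over $K$, or over $K _{h(v)}$, would consist of factors in $O _{v}(K)[X]$ (the valuation ring being integrally closed) and would reduce to a nontrivial factorization of $\bar g$; hence $g$ is irreducible over both $K$ and $K _{h(v)}$. Consequently $F _{0} := K[X]/(g)$ has degree $p$, and $v$ extends uniquely to $F _{0}$ because $g$ stays irreducible over $K _{h(v)}$. Since ${\rm disc}(\bar g)$ is a unit in $\widehat K$, so is ${\rm disc}(g)$; thus $g$ is separable, $F _{0} \subseteq K _{\rm sep}$, and the residue of a root of $g$ generates a copy of $\widetilde F$, so the fundamental inequality forces $e = 1$ and $\widehat{F _{0}} \cong \widetilde F$ over $\widehat K$, i.e. $F _{0}/K$ is inertial. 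Over $K _{h(v)}$ this already gives the desired lift: by Proposition 2.2 the residue field of $K _{h(v)}$ is $\widehat K$, so (2.3) applied to the Henselian field $K _{h(v)}$ shows that $F _{1} := F _{0}K _{h(v)}$ is inertial with $\mathcal{G}(F _{1}/K _{h(v)}) \cong \mathcal{G}(\widetilde F/\widehat K) \cong \mathbb Z/p$, whence $F _{1} \subseteq K _{h(v)}(p)$.

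The main obstacle is to place the lift inside $K(p)$ rather than only inside $K _{h(v)}(p)$. Here $F _{0} \subseteq K(p)$ holds if and only if $F _{0}/K$ is Galois, i.e. cyclic of degree $p$ (a degree-$p$ subextension of the pro-$p$ extension $K(p)/K$ corresponds to an index-$p$, hence normal, subgroup); but an arbitrary lift need not be Galois over $K$ — e.g. a generic monic lift of an irreducible cubic over $\mathbb F _{p}$ can acquire Galois group $S _{3}$ over $K$ — so its Galois closure over $K$ may fail to be a $p$-extension. I would therefore choose the defining equation to be manifestly cyclic whenever possible: if ${\rm char}(K) = p$, an Artin--Schreier lift $\theta ^{p} - \theta = a$ with $a \in O _{v}(K)$ reducing to an Artin--Schreier generator of $\widetilde F$; if ${\rm char}(K) \neq p$, ${\rm char}(\widehat K) \neq p$ and $\varepsilon \in K$, a Kummer lift $\theta ^{p} = a$ with $a \in O _{v}(K) ^{\ast }$ a unit reducing to a Kummer generator. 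In each case $F := K(\theta )$ is cyclic of degree $p$, hence lies in $K(p)$, and is inertial by the computation above. The hard case, decisive for the main theorems, is ${\rm char}(K) = 0$ with ${\rm char}(\widehat K) = p$ (and likewise ${\rm char}(\widehat K) \neq p$ with $\varepsilon \notin K$), where neither construction is available over $K$: here I would form the cyclic inertial $F _{1} \subseteq K _{h(v)}(p)$ and then descend it to a cyclic degree-$p$ subextension of $K(p)$. The delicate point, where the hypothesis $\widetilde F \subseteq \widehat K(p)$ and Proposition 2.2 (immediacy of $K _{h(v)}/K$, and the transfer of $\varepsilon $ between $\widehat K$ and $K _{h(v)}$) must be used, is that the unramified cyclic character of the decomposition group $Z _{w} \cong \mathcal{G}(K _{h(v)}(p)/K _{h(v)})$ cutting out $F _{1}$ has to be extended to a character of the whole group $\mathcal{G}(K(p)/K)$ — equivalently, $\widetilde F$ must be realized as the residue of the maximal unramified $p$-subextension of $K(p)/K$; in the subcase $\varepsilon \notin K$ I would first reduce to the Kummer case over $K(\varepsilon )$, whose degree over $K$ is prime to $p$, and then descend.
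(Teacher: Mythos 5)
Your reduction to the cyclic case, the lifting of the minimal polynomial, and the verification that the resulting $F_{0}$ is inertial of degree $p$ (irreducibility over $K_{h(v)}$ via Gauss's lemma and reduction, then the fundamental inequality forcing $e=1$, $f=p$) are all sound, and you correctly isolate the real difficulty: an inertial lift lies in $K(p)$ if and only if it is cyclic over $K$, which is automatic over the Henselization but not over $K$ itself. The problem is that at exactly this point the proposal stops proving and starts describing. For ${\rm char}(K)=0$, ${\rm char}(\widehat K)=p$ (and for $\varepsilon\notin K$) you say you would ``descend'' the cyclic inertial extension $F_{1}/K_{h(v)}$ to a cyclic degree-$p$ subextension of $K(p)/K$, rephrasing this as extending a character from the decomposition group $\mathcal{G}(K_{h(v)}(p)/K_{h(v)})$ to all of $\mathcal{G}(K(p)/K)$. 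But restriction $H^{1}(\mathcal{G}(K(p)/K),\mathbb F_{p})\to H^{1}(\mathcal{G}(K(p)/K_{h(v)}\cap K(p)),\mathbb F_{p})$ is not surjective for a general closed subgroup, and no reason is given why the particular unramified character extends (or why some global character with the same kernel on the decomposition group exists). This descent is the entire content of the lemma in the hard case, and it is precisely where the paper invokes an external theorem: the Grunwald--Wang theorem in the valuation-theoretic setting of Lorenz--Roquette, which applies when $v$ has height $1$ (so that $K$ is dense in $K_{h(v)}$).

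The proposal also ignores the rank of $v$ entirely, whereas the paper's argument is organized around it: Grunwald--Wang handles height $1$; for finite height $n\ge 2$ one passes to the maximal $H\in{\rm Is}_{v}(K)$, applies induction to $(K_{H},\hat v_{H})$ of height $n-1$, and observes that an inertial lift relative to $v_{H}$ of an inertial lift relative to $\hat v_{H}$ is inertial relative to $v$; and for infinite height one performs a separate reduction, cutting $K$ down to a subfield $\Phi$ of finite transcendence degree $d$ over the prime field (carrying along the coefficients, roots, and approximating data of the defining polynomial) so that the induced valuation has height $\le 1+d$, and then pushing the lift back up via $F=F_{0}K$. None of the second and third steps is even hinted at in your proposal, and the first is replaced by an unproved assertion. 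So while your setup is correct and your diagnosis of where the difficulty lies is accurate, the proof has a genuine gap at its decisive step.
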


\begin{proof}
If char$(K) = p$, then our assertion is an easy consequence of the
Artin-Schreier theory, so we assume further that char$(K) = p
^{\prime } \neq p$. Suppose first that $v$ is of height $n \in
\mathbb N$. When $n = 1$, our assertion is a special case of
Grunwald-Wang's theorem (cf. \cite{LR}). Proceeding by induction on
$n$, we prove the statement of the lemma, under the hypothesis that
$n \ge 2$ and it holds for valued fields of heights $< n$. Let $H$
be the maximal group from Is$_{v}(K)$. Then the valuation $\hat v
_{H}$ of $K _{H}$ is of height $n - 1$ and, by the inductive
hypothesis, $\widetilde F$ has an inertial lift $\widetilde F _{H}
\subseteq K _{H}(p)$ over $K _{H}$ relative to $\hat v _{H}$. Note
also that if $F \in I(K(p)/K)$ is an inertial lift of $\widetilde F
_{H}$ over $K$ relative to $v _{H}$, then it is an inertial lift of
$\widetilde F$ over $K$ with respect to $v$ as well.
\par
In order to complete the proof of Lemma 2.3 it remains to be seen
that it reduces to the special case in which $v$ is of finite
height. Let $\mathbb F$ be the prime subfield of $K$. Clearly, it
suffices to show that one may consider only the case where
$K/\mathbb F$ has finite transcendency degree. Fix a Henselization
$(K _{h(v)}, \sigma )$ of $(K, v)$ (with $K _{h(v)} \in I(K _{\rm
sep}/K)$) and an inertial lift $H(\widetilde F) \in I(K _{\rm sep}/K
_{h(v)})$ of $\widetilde F$ over $K _{h(v)}$. Let $f(X) = X ^{p} +
\sum _{i=1} ^{p} c _{i}X ^{p-i}$ be the minimal polynomial over $K
_{h(v)}$ of some primitive element $\eta _{1}$ of $H(\widetilde F)/K
_{h(v)}$ lying in $O _{\sigma }(H(\widetilde F))$. Denote by $S$ the
set $\{\eta _{i}\colon \ i = 1, \dots , p\}$ of zeroes of $f(X)$ in
$K _{\rm sep}$, and by $T$ the set of $K _{h(v)}$-coordinates of the
zeroes $\eta _{i}\colon \ i = 2, \dots , p$, with respect to the $K
_{h(f)}$-basis $\eta _{1} ^{j-1}$, $j = 1, \dots , p$, of
$H(\widetilde F)$. Our choice of $\eta _{1}$ ensures the inclusion
$Y _{f} = \{c _{i}\colon \ i = 1, \dots , p\} \subset O _{\sigma }(K
_{h(v)})$. As the valued extension $(K _{h(v)}, \sigma )/(K, v)$ is
immediate, $O _{v}(K)$ contains a system of representatives $Y _{f}
^{\prime } = \{c _{i} ^{\prime }\colon \ i = 1, \dots , p\}$ of the
elements of $Y _{f}$ modulo the ideal $M _{\sigma }(K _{h(v)})$. Let
$S ^{\prime }$ and $T ^{\prime }$ be the sets of coefficients of the
minimal (monic) polynomials over $K$ of the elements of $Y _{f}$ and
$T$, respectively, and let $\Lambda _{0}$ be the extension of
$\mathbb F$ generated by the union $Y _{f} \cup Y _{f} ^{\prime }
\cup S \cup S ^{\prime } \cup T \cup T ^{\prime }$. Assume that
$\Lambda $ is the algebraic closure of $\Lambda _{0}$ in $K _{\rm
sep}$, $\Phi = K \cap \Lambda $, $\Phi ^{\prime } = K _{h(v)} \cap
\Lambda $, and $\psi $ and $\psi ^{\prime }$ are the valuations
induced by $\sigma $ on $\Phi $ and $\Phi ^{\prime }$, respectively.
It is easily verified that the polynomial $g(X) = X ^{p} + \sum
_{i=1} ^{p} c _{i} ^{\prime }X ^{p-i}$ lies in $O _{\psi }(\Phi )
[X]$ and the root field $\widetilde F _{0}$, over $\widehat \Phi $,
of its reduced polynomial $\hat g$ modulo $M _{\psi }(\Phi )$ is a
cyclic extension of $\widehat \Phi $ of degree $p$. Observing that
$\psi $ is of height $\le 1 + d$, where $d$ is the transcendency
degree of $\Phi /\mathbb F$, one concludes that $\widetilde F _{0}$
has an inertial lift $F _{0} \in I(\Phi (p)/\Phi )$ over $\Phi $.
Since $\hat g$ remains irreducible over $\widehat K$, the compositum
$F = F _{0}K$ is an inertial lift of $\widetilde F$ over $K$, so
Lemma 2.3 is proved.
\end{proof}

\par
\medskip
When $(K, v)$ is a $p$-Henselian field, for a given $p \in \mathbb
P$, (2.3) and Lemma 2.3, combined with Galois theory and the
subnormality of proper subgroups of finite $p$-groups (cf. \cite{L},
Ch. I, Sect. 6), imply the following assertions:
\par
\medskip
(2.4) (i) Each finite extension $\widetilde F$ of $\widehat K$ in
$\widehat K(p)$, possesses a uniquely determined, up-to a
$K$-isomorphism, inertial lift over $K$ relative to $v$;
\par
(ii) The residue field of $(K(p), v _{K(p)})$ is $\widehat
K$-isomorphic to $\widehat K(p)$.
\par
\medskip
Statements (2.4) and the following assertion reduce the study of a
number of algebraic properties of maximal $p$-extensions of
$p$-Henselian fields to the special case of Henselian ground fields:
\par
\medskip
(2.5) Let $(K, v)$ be a $p$-Henselian field, $(K _{h(v)}, \sigma )$
a Henselization of $(K, v)$, and $U$ the compositum of inertial
extensions of $K$ in $K(p)$ relative to $v$. Then $UK _{h(v)}$ is $K
_{h(v)}$-isomorphic to the tensor product $U \otimes _{K} K
_{h(v)}$, and equals the compositum of inertial extensions of $K
_{h(v)}$ in $K _{h(v)}(p)$ relative to $\sigma $.
\par
\medskip\noindent
Statement (2.5) is implied by (2.4) and the fact that $(K _{h(v)},
\sigma )$ is immediate over $(K, v)$. In the sequel, we will also
need the following characterization of finite extensions of $K _{v}$
in $K _{v}(p)$, in case $v$ is $p$-Henselian with $v(K)$ Archimedean
(cf. \cite{B1}, Ch. VI, Sect. 8.2, and \cite{JW}, page 135):
\par
\medskip
(2.6) (i) Every finite extension $L$ of $K _{v}$ in $K _{v,{\rm
sep}}$ is $K _{v}$-isomorphic to $\widetilde L \otimes _{K} K _{v}$
and $\widetilde L _{v}$, where $\widetilde L$ is the separable
closure of $K$ in $L$. The extension $L/K _{v}$ is Galois if and
only if so is $\widetilde L/K$; when this holds, $\mathcal{G}(L/K
_{v})$ and $\mathcal{G}(\widetilde L/K)$ are canonically
isomorphic.
\par
(ii) $K _{\rm sep} \otimes _{K} K _{v}$ is a field and there are
canonical isomorphisms $K _{\rm sep} \otimes _{K} K _{v} \cong K
_{v,{\rm sep}}$ and $\mathcal{G}_{K} \cong \mathcal{G}_{K _{v}}$.
\par
\medskip\noindent
For example, when char$(K) = 0$, $\widehat K$ is finite,
char$(\widehat K) = p$ and the minimal group $G(K) \in {\rm
Is}_{v}(K)$ containing $v(p)$ is cyclic, (2.6) allows us to
determine the structure of $\mathcal{G}(K(p)/K)$ in accordance with
(1.1) and (1.2). As noted in the Introduction, the concluding result
of this Section enables one to prove that admissible pro-$p$-groups
are isomorphic to decomposition groups; this result can be deduced
from Galois theory and the main result of \cite{MT}:
\par
\medskip
(2.7) For each Henselian field $(K, v)$, there is $R \in I(K _{\rm
sep}/K)$, such that $\mathcal{G}_{R} \cong \mathcal{G}_{\widehat
K}$, $v(R)$ is divisible and finite extensions of $R$ in $K _{\rm
sep}$ are inertial.

\medskip

\section{\bf Free pro-$p$-groups and Demushkin groups in the class of
Galois groups of maximal $p$-extensions}

The purpose of this Section is to characterize the groups pointed
out in its title. Our argument relies on the following two lemmas.

\medskip
\begin{lemm}
Let $A$ be an abelian torsion $p$-group, $\mu $ a positive integer
dividing $p - 1$, $\varphi $ an automorphism of $A$ of order $\mu $,
and $\varepsilon _{\mu }$ a primitive $\mu $-th root of unity in the
ring $\mathbb Z _{p}$ of $p$-adic integers. Then $\varphi $ and the
mapping $\varphi ^{u} \to \varepsilon _{\mu } ^{u}$, $u = 0, \dots ,
\mu - 1$, induce canonically on $A$ a structure of a $\mathbb Z
_{p}$-module. This module decomposes into a direct sum $A = \oplus
_{u=0} ^{\mu -1} A _{u}$, where $A _{u} = \{a _{u} \in A\colon \
\varphi (a _{u}) = \varepsilon ^{u}a _{u}\}$, for each index $u$.
\end{lemm}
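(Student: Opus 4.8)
The plan is to realize $A$ as a module over the group algebra $\mathbb{Z}_p[\langle \varphi \rangle]$ and then split it by the idempotents attached to the $\mu$ characters $\varphi \mapsto \varepsilon_\mu^u$. First I would recall that an abelian torsion $p$-group carries a canonical $\mathbb{Z}_p$-module structure: an element $a$ of order $p^n$ is acted upon by $z \in \mathbb{Z}_p$ through the image of $z$ in $\mathbb{Z}_p/p^n\mathbb{Z}_p \cong \mathbb{Z}/p^n\mathbb{Z}$, and one checks this is independent of the chosen bound $n$ and additive in $z$. Since any group automorphism of $A$ commutes with multiplication by integers, hence, by density of $\mathbb{Z}$ in $\mathbb{Z}_p$ and continuity, with the whole $\mathbb{Z}_p$-action, $\varphi$ is a $\mathbb{Z}_p$-linear automorphism. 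As $\varphi^\mu = \mathrm{id}$, this turns $A$ into a module over $\mathbb{Z}_p[\langle\varphi\rangle]$, which is the canonical structure referred to in the statement.

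The crucial observation is that $\mu \mid p-1$ forces $\gcd(\mu, p) = 1$, so $\mu$ is a unit in $\mathbb{Z}_p$; together with the presence of $\varepsilon_\mu$ this lets one introduce the standard orthogonal idempotents. Explicitly, I would set
\[
e_u = \frac{1}{\mu}\sum_{j=0}^{\mu-1}\varepsilon_\mu^{-uj}\varphi^j \in \mathbb{Z}_p[\langle\varphi\rangle], \qquad u = 0, \dots, \mu-1,
\]
and verify by direct geometric-sum computations that $\sum_{u=0}^{\mu-1} e_u = 1$, that $e_u e_{u'} = \delta_{u u'} e_u$, and that $\varphi e_u = \varepsilon_\mu^u e_u$. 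Each identity reduces to the elementary fact that $\sum_{u=0}^{\mu-1}\varepsilon_\mu^{uk}$ equals $\mu$ when $\mu \mid k$ and $0$ otherwise, which is available precisely because $\varepsilon_\mu$ is a primitive $\mu$-th root of unity and $\mu$ is invertible in $\mathbb{Z}_p$.

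From this complete system of orthogonal idempotents one gets at once the $\mathbb{Z}_p$-module decomposition $A = \bigoplus_{u=0}^{\mu-1} e_u A$. It then remains to identify $e_u A$ with the eigenspace $A_u = \{a \in A : \varphi(a) = \varepsilon_\mu^u a\}$. The inclusion $e_u A \subseteq A_u$ is immediate from $\varphi e_u = \varepsilon_\mu^u e_u$; for the reverse inclusion I would note that if $\varphi(a) = \varepsilon_\mu^u a$, then $\varphi^j(a) = \varepsilon_\mu^{uj} a$, whence $e_u a = \frac{1}{\mu}\sum_{j=0}^{\mu-1} \varepsilon_\mu^{-uj}\varepsilon_\mu^{uj} a = a$, so that $a \in e_u A$. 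This yields $A = \bigoplus_{u=0}^{\mu-1} A_u$, as claimed.

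I expect the only genuinely delicate point to be the foundational one: making sure the $\mathbb{Z}_p$-action is well defined on a torsion, not necessarily bounded, group and that $\varphi$ is $\mathbb{Z}_p$-linear, so that expressions such as $\frac{1}{\mu}\varepsilon_\mu^{-uj}\varphi^j(a)$ actually make sense. Everything after that is the classical Maschke-type splitting of a group algebra over a ring in which the group order is invertible and the required roots of unity are present; the hypothesis $\mu \mid p-1$ is exactly what guarantees both the invertibility of $\mu$ and the existence of $\varepsilon_\mu$ in $\mathbb{Z}_p$.
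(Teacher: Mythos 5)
Your proof is correct and follows essentially the same route as the paper: the paper's elements $a_u = \sum_{u'}\varepsilon_\mu^{(\mu-u)u'}\varphi^{u'}(a)$ are exactly $\mu\, e_u(a)$ in your notation, and its appeal to the invertibility of the matrix $(\varepsilon_\mu^{(1-i)(j-1)})$ in $M_\mu(\mathbb{Z}_p)$ is the same character-orthogonality computation that underlies your idempotent identities. The only cosmetic difference is that you normalize by $1/\mu$ at the outset, packaging the averaging argument as a Maschke-type system of orthogonal idempotents rather than as a Vandermonde inversion.
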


\begin{proof}
The scalar multiplication $\mathbb Z _{p} \times A \to A$ is
uniquely defined by the group operation in $A$ and the rule $z.a =
0\colon \ a \in A$, $p ^{k}a = 0$, $z \in p ^{k}\mathbb Z _{p}$. For
each $n \in \mathbb N$, denote by $s _{n}$ the integer satisfying
the conditions $\varepsilon _{\mu } - s _{n} \in p ^{n}\mathbb Z
_{p}$ and $0 \le s _{n} \le p ^{n} - 1$. It is easily seen that
$\varepsilon _{\mu } ^{u}a = s _{n}a$ whenever $a \in A$, $n \in
\mathbb N$ and $p ^{n}a = 0$. Note also that the element $a _{u} =
\sum _{u'=0} ^{\mu -1} \varepsilon _{\mu } ^{(\mu -u)u'}\varphi
^{u'} (a)$ lies in $A _{u}$, for $u = 0, \dots , \mu - 1$. Since the
matrix $(z _{ij}) = (\varepsilon _{\mu } ^{(1-i)(j-1)})$, $1 \le i,
j \le \mu $, is invertible in the ring $M _{\mu }(\mathbb Z _{p})$,
this implies that the $\mathbb Z _{p}$-submodule of $A$ generated by
the elements $a _{u}$, $u = 0, \dots , \mu - 1$, contains $\varphi
^{u'-1}(a)$, for $u ^{\prime } = 0, \dots , \mu - 1$, which
completes the proof of Lemma 3.1.
\end{proof}

\medskip
\begin{lemm}
Let $E$ be a field with $r(p)_{E} \in \mathbb N$, for some $p \in
P(E)$, $p \neq {\rm char}(E)$, and let $L$ be an extension of $E$ in
$E(p)$ of degree $p$. Assume that $\varepsilon $ is a primitive
$p$-th root of unity in $E _{\rm sep}$, $\varphi $ is a generator of
$\mathcal{G}(E(\varepsilon )/E)$, and $E _{\varepsilon } = \{a \in
E(\varepsilon ) ^{\ast }\colon \ \varphi (a)a ^{-s} \in
E(\varepsilon ) ^{\ast p}\}$, where $s$ is an integer satisfying the
equality $\varepsilon ^{s} = \varphi (\varepsilon )$. Put $N(L/E)
_{\varepsilon } = E _{\varepsilon } \cap N(L(\varepsilon
)/E(\varepsilon ))$ and denote by $p ^{t}$ the index of
$E(\varepsilon ) ^{\ast p}$ in $N(L/E) _{\varepsilon }$. Then:
\par
{\rm (i)} Each $\lambda \in N(L/E) _{\varepsilon }$ is presentable
as a product $\lambda = N _{E(\varepsilon )} ^{L(\varepsilon )}
(\tilde \lambda )\lambda _{0} ^{p}$, for some $\tilde \lambda \in L
_{\varepsilon }$, $\lambda _{0} \in E(\varepsilon ) ^{\ast p}$;
\par
{\rm (ii)} $r(p)_{L} = r(p)_{E} + (p - 1)(t - 1)$;
\par
\smallskip
{\rm (iii)} $E _{\varepsilon } \subseteq N(L(\varepsilon
)/E(\varepsilon ))$ if and only if $r(p)_{L} = 1 + p(r(p)_{E} - 1)$.
\end{lemm}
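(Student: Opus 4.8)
The plan is to transport everything to the fields $E' = E(\varepsilon)$ and $L' = L(\varepsilon)$, which contain a primitive $p$-th root of unity, and to run a module computation there; all dimensions below are over $\mathbb{F}_p$. Set $\Gamma = \mathcal{G}(E'/E) = \langle\varphi\rangle$, of order $m \mid p-1$. Since $L\subseteq E(p)$ has degree $p$ over $E$, the subgroup $\mathcal{G}(E(p)/L)$ has index $p$ in the pro-$p$-group $\mathcal{G}(E(p)/E)$, hence is normal, so $L/E$ and $L'/E'$ are cyclic of degree $p$; write $C = \mathcal{G}(L'/E') = \langle\tau\rangle$, and take the generator of $\mathcal{G}(L'/L)\cong\Gamma$ restricting to $\varphi$, so that the same integer $s$ serves for $L$. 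By Kummer theory over $E'$ and $L'$, inflation--restriction for $\Gamma$ (whose order is prime to $p$), and the eigenspace decomposition of Lemma 3.1 applied to $\bar E' := E'^{\ast}/E'^{\ast p}$ and $\bar L' := L'^{\ast}/L'^{\ast p}$, one gets $r(p)_E = \dim(E_\varepsilon/E'^{\ast p})$ and $r(p)_L = \dim(L_\varepsilon/L'^{\ast p})$. Thus $W := E_\varepsilon/E'^{\ast p}$ and $V := L_\varepsilon/L'^{\ast p}$ are precisely the $s$-eigenspaces of $\varphi$ on $\bar E'$ and $\bar L'$, and $t$ is the dimension of the image of $N(L/E)_\varepsilon$ in $W$.

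For (i), write $\lambda = N(\mu)$ with $\mu\in L'^{\ast}$ and apply the explicit eigenprojection of Lemma 3.1 to $\bar\mu\in\bar L'$, extracting its $s$-eigencomponent $\tilde\lambda\in L_\varepsilon$. Since the norm $N = N^{L'}_{E'}$ commutes with $\varphi$, the induced map $N_\ast\colon\bar L'\to\bar E'$ sends each $\varphi$-eigenspace into the eigenspace of the same eigenvalue; as $\bar\lambda$ lies in $W$ already, comparison of eigencomponents in $\bar\lambda = N_\ast(\bar\mu)$ gives $N(\tilde\lambda)\equiv\lambda\pmod{E'^{\ast p}}$, which is (i).

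Part (ii) is the heart. I would study $V$ as an $\mathbb{F}_p[C]$-module via the restriction $j_s\colon W\to V$ and corestriction $N_s\colon V\to W$, using $N_s j_s = 0$, $j_s N_s = (\tau-1)^{p-1}$ (the norm element of $C$, since $1+\tau+\cdots+\tau^{p-1} = (\tau-1)^{p-1}$ over $\mathbb{F}_p$), $\ker j_s = \langle\bar a\rangle$ with $L' = E'(\alpha)$, $\alpha^p = a\in E_\varepsilon$, and $\bar a\in{\rm im}\,N_s$ (because $a = N(\alpha)$, via (i)). Then ${\rm im}\,(\tau-1)^{p-1} = j_s({\rm im}\,N_s)$ has dimension $t-1$, so the free summand $\mathbb{F}_p[C]$ occurs in $V$ with multiplicity $n_p = t-1$. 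The decisive point is to control the remaining, non-free Jordan blocks: by downward induction on $i$ I would prove $V^C\cap(\tau-1)^iV\subseteq\langle\bar\varepsilon\rangle + (\tau-1)^{p-1}V$. Indeed, if $\bar u = (\tau-1)^i\bar y\in V^C$ with $i\le p-2$, then $(\tau-1)^{p-1}\bar y = 0$, so $N_s(\bar y)\in\ker j_s = \langle\bar a\rangle$, i.e. $N(y)\equiv a^c\pmod{E'^{\ast p}}$; writing $y = \alpha^c e\,(\tau(z)/z)$ with $e\in E'^{\ast}$ and using $\tau(\alpha)/\alpha = \varepsilon$ yields $(\tau-1)\bar y\equiv c\,\bar\varepsilon\pmod{(\tau-1)^2V}$, and one descends. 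This forces $V$ to have no block of size strictly between $2$ and $p$, and exactly $1-\delta$ blocks of size $2$, where $\delta = 1$ if $\varepsilon\in N(L'/E')$ and $\delta = 0$ otherwise (the size-$2$ block is the one on $\bar\alpha$, whose socle $\bar\varepsilon$ lands in $(\tau-1)^{p-1}V$ exactly when $\delta = 1$). A separate computation of $\dim V^C = (r(p)_E-1)+\delta$ — from the five-term sequence of $1\to\mathcal{G}(E(p)/L)\to\mathcal{G}(E(p)/E)\to C\to 1$, or from the snake lemma on $1\to L'^{\ast p}\to L'^{\ast}\to\bar L'\to1$, the term $\delta$ being $1$ exactly when $\varepsilon$ is a norm — then pins down the trivial blocks. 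Assembling $n_p = t-1$, the block sizes, and $\dim V^C$, the occurrences of $\delta$ cancel and $\dim V = p(t-1)+2(1-\delta)+(\dim V^C-(t-1)-(1-\delta)) = r(p)_E+(p-1)(t-1)$, giving (ii).

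Finally, (iii) is immediate from (ii) and the first step: $E_\varepsilon\subseteq N(L'/E')$ is equivalent to $N(L/E)_\varepsilon = E_\varepsilon$, i.e. to $t = r(p)_E$, and substituting into (ii) gives $r(p)_L = r(p)_E+(p-1)(r(p)_E-1) = 1+p(r(p)_E-1)$, while conversely this value of $r(p)_L$ forces $t = r(p)_E$. The main obstacle is the structural lemma on the Jordan blocks of $V$: the formal identities (Hilbert 90 and the two transfer relations) only deliver $n_p = t-1$ and are otherwise circular for $\dim V$, so the descent argument bounding the non-free block sizes — the one genuinely arithmetic input beyond $\dim V^C$ — is exactly what makes the rank count rigid.
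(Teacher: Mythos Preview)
Your approach is correct and genuinely different from the paper's. Both proofs rest on the same raw ingredients---Kummer theory, the eigenspace decomposition of Lemma~3.1, Hilbert~90, and the identification $r(p)_L=\dim_{\mathbb F_p}(L_\varepsilon/L(\varepsilon)^{*p})$---but they organize them differently. The paper argues by \emph{constructing an explicit basis}: it fixes generators $a_1,\dots,a_{r(p)_E}$ of $E_\varepsilon/E(\varepsilon)^{*p}$ adapted to $N(L(\varepsilon)/E(\varepsilon))$, lifts $a_2,\dots,a_t$ to norm-preimages $\xi_i\in L_\varepsilon$, and then shows (by repeated Hilbert~90) that the elements $\xi_1,\ \xi_{i,j}:=\bar\psi(\xi_{i,j-1})\xi_{i,j-1}^{-1}$, and the non-norm $a_u$ form a basis of $L_\varepsilon/L(\varepsilon)^{*p}$; the case $p=2$ is then handled by a short separate case analysis. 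Your proof instead determines the \emph{Jordan type} of $V$ as an $\mathbb F_p[C]$-module: the descent argument $V^C\cap(\tau-1)^iV\subseteq\langle\bar\varepsilon\rangle+(\tau-1)^{p-1}V$ is exactly the module-theoretic shadow of the paper's repeated Hilbert~90, and your cohomological computation of $\dim V^C$ replaces the paper's direct spanning check. Your route is cleaner conceptually and makes the role of the two invariants ($t$ and $\delta$) transparent; the paper's route is more elementary and self-contained.

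One small gap: your claim $\bar a\in\mathrm{im}\,N_s$ via ``$a=N(\alpha)$'' fails when $p=2$, since then $N(\alpha)=-a$; thus $\bar a\in\mathrm{im}\,N_s$ holds only when $-1\in N(L/E)$. In the complementary case one gets $n_2=t$ (not $t-1$) and $\delta=0$, but the final count $\dim V=\dim V^C+n_2=(r(2)_E-1)+t$ still matches the asserted formula. So the statement survives, but your argument as written needs the same kind of brief $p=2$ case split that the paper performs.
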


\begin{proof}
Fix a generator $\psi $ of $\mathcal{G}(L/E)$, denote by $\bar \psi
$ the $E(\varepsilon )$-automorphism of $L(\varepsilon )$ extending
$\psi $, set $[E(\varepsilon )\colon E] = m$, and for each $\lambda
\in L(\varepsilon )$, let $\rho (\lambda ) = \prod _{i=0} ^{m-1}
\bar \varphi ^{i} (\lambda ) ^{l ^{i}}$, where $\bar \varphi $ is
the $L$-automorphism of $L(\varepsilon )$ extending $\varphi $, and
$l$ is an integer with $0 < l \le p - 1$ and $sl \equiv 1 ({\rm mod}
\ p)$. It is easily verified that $\rho (a)a ^{-m} \in E(\varepsilon
) ^{\ast p}$, for each $a \in E _{\varepsilon }$. Note also that
$L(\varepsilon )/E$ is cyclic, whence $N _{E(\varepsilon )}
^{L(\varepsilon )} (\bar \varphi (u)) = \varphi (N _{E(\varphi )}
^{L(\varepsilon )} (u))$, for each $u \in L(\varepsilon )$. This
proves Lemma 3.2 (i).
\par
Lemma 3.2 (iii) is implied by Lemma 3.2 (ii), so we turn to the
proof of Lemma 3.2 (ii). It is easily obtained from Kummer theory and \cite{Wh}, Theorem~2,
that $t = 0$ if and only if $p = 2$, $E$ is a Pythagorean field
(i.e. formally real with $E ^{\ast 2}$ closed under addition) and $L
= E(\sqrt{-1})$. When this holds, it can be deduced from Hilbert's
Theorem 90 (cf. \cite{L}, Ch. VIII, Sect. 6) that $L ^{\ast } = K
^{\ast }L ^{\ast 2}$ and $L ^{\ast 2} \cap K ^{\ast } = K ^{\ast 2}
\cup -1.K ^{\ast 2}$. This implies that $r(2)_{L} = r(2)_{E} - 1$,
as claimed by Lemma 3.2. Henceforth, we assume that $t > 0$. Suppose
first that $r(p)_{E} = 1$. Then it follows from \cite{Wh},
Theorem~2, that $\mathcal{G}(E(p)/E) \cong \mathbb Z _{p}$ unless $p
= 2$ and $E$ is Pythagorean with a unique ordering (i.e. with $E
^{\ast } = E ^{\ast 2} \cup -1.E ^{\ast 2}$). This, combined with
Albert's theorem (cf. \cite{A1}, Ch. IX, Sect. 6) or \cite{Ch3},
Lemma~3.5, proves our assertion. Henceforth, we assume that
$r(p)_{E} \ge 2$. It follows from Kummer theory and Albert's theorem
that $E _{\varepsilon }/E(\varepsilon ) ^{\ast p}$ has dimension
$r(p)_{E}$ as an $\mathbb F _{p}$-vector space. More precisely, it
is easily verified that there exist $a _{1}, \dots , a _{r(p)_{E}}
\in E _{\varepsilon }$, such that the cosets $a _{j}E(\varepsilon )
^{\ast p}$, $j = 1, \dots , r(p)_{E}$, form a basis of $E
_{\varepsilon }/E(\varepsilon ) ^{\ast p}$ and the following
conditions hold:
\par
\medskip
(3.1) (i) $L(\varepsilon )$ is generated over $E(\varepsilon
)$ by a $p$-th root $\xi _{1}$ of $a _{1}$; moreover, $a _{1}$
can be chosen so that $\xi _{1} \in L _{\varepsilon }$;
\par
(ii) There is an index $\bar t\ge 2$, such that $a _{i} \in
N(L(\varepsilon )/E(\varepsilon ))$, $i = 2, \dots , \bar t$, and
$a _{i} \notin N(L(\varepsilon )/E(\varepsilon ))$, $i > t$;
\par
(iii) $a _{t+1} = -a _{1}$, provided that $p = 2$, $-1 \notin E
^{\ast 2}$ and $a _{1} \notin N(L/E)$.
\par
\medskip\noindent
In view of Lemma 3.2 (i), $L _{\varepsilon }$ contains elements $\xi
_{1}, \dots , \xi _{t} \in L _{\varepsilon }$, such that $\xi _{1}
^{p}a _{1} ^{-1} \in E(\varepsilon ) ^{\ast p}$ and $N
_{E(\varepsilon )} ^{L(\varepsilon )} (\xi _{i})a _{i} ^{-1} \in
E(\varepsilon ) ^{\ast p}$, for $i = 2, \dots , \bar t$. For each
index $i \ge 2$, put $\xi _{i,0} = \xi _{i}$, and $\xi _{i,j} = \bar
\psi (\xi _{i,(j-1)})\xi _{i,(j-1)} ^{-1}$, for $j = 2, \dots , (p -
1)$. Using repeatedly Hilbert's Theorem 90 (as in the proof of
implication (cc)$\to $(c) of \cite{Ch3}, (5.3)), one concludes that
$L _{\varepsilon }/L(\varepsilon ) ^{\ast p}$ contains as an
$\mathbb F _{p}$-basis the following set $S$ of cosets:
\par
\medskip
(3.2) (i) If $p > 2$ or $-1 \notin E ^{\ast 2}$, then $S$ consists
of $\xi _{1}L(\varepsilon ) ^{\ast p}$, $\xi _{i,j}L(\varepsilon )
^{\ast p}$, $2 \le i \le \bar t$, $0 \le j \le (p - 1)$, and $a
_{u}L(\varepsilon ) ^{\ast p}$, $u > t$;
\par
(ii) When $p = 2$ and $a _{1} \notin N(L/E)$, $S$ is formed by $\xi
_{1}L(\varepsilon ) ^{\ast 2}$, $\xi _{i,j}L(\varepsilon ) ^{\ast
2}$, $2 \le i \le \bar t - 1$, $0 \le j \le 1$, and $a
_{u}L(\varepsilon ) ^{\ast 2}$, $u \ge \bar t$;
\par
(iii) If $p = 2$, $-1 \notin E ^{\ast 2}$ and there is $\xi _{1}
^{\prime } \in L$ of norm $N_{E}^{L}(\xi _{1} ^{\prime }) = a _{1}$,
then $S$ equals $\xi _{1} ^{\prime }L(\varepsilon ) ^{\ast
2}$, $\xi _{i,j}L(\varepsilon ) ^{\ast 2}$, $2 \le i \le \bar t$, $0
\le j \le 1$, and $a _{u}L(\varepsilon ) ^{\ast 2}$, $u > \bar t$.
\par
\medskip
Note finally that, by \cite{A1}, Ch. IX, Theorem~6, and Kummer
theory, $r(p)_{L}$ equals the dimension of $L _{\varepsilon
}/L(\varepsilon ) ^{\ast p}$ as an $\mathbb F _{p}$-vector space.
This implies that the index $\bar t$ in (3.1) is equal to $t$ in
cases (i) and (iii) of (3.2), and in case (3.2) (ii), $\bar t = t +
1$. Therefore, Lemma 3.2 (ii) can be deduced from (3.2).
\end{proof}

\begin{coro}
Let $E$ be a field with $r(p)_{E} \in \mathbb N$, for some $p \in
P(E)$. Then $\mathcal{G}(E(p)/E)$ is a free pro-$p$-group if and
only if $r(p)_{L} = 1 + [L\colon E]$, for every finite extension $L$
of $E$ in $E(p)$.
\end{coro}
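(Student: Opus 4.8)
The plan is to translate the assertion into pro-$p$-group theory and to recognize it as the Schreier-index characterization of freeness. Set $G=\mathcal G(E(p)/E)$, a pro-$p$-group of finite rank $r(G)=r(p)_{E}$. By Galois theory the finite extensions $L$ of $E$ in $E(p)$ are exactly the fixed fields of the open subgroups $H\le G$, with $[L\colon E]=[G\colon H]$; moreover $L(p)=E(p)$, since $L/E$ is a finite $p$-extension, so $r(p)_{L}=r(\mathcal G(E(p)/L))=r(H)$. Thus the statement becomes: $G$ is free if and only if $r(H)=1+[G\colon H](r(G)-1)$ for every open $H\le G$. The forward implication is then standard: an open subgroup $H$ of a finitely generated free pro-$p$-group is itself free, of rank given by Schreier's formula, so $r(p)_{L}=1+[L\colon E](r(p)_{E}-1)$ for every finite $L\in I(E(p)/E)$.

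For the converse I would fix a minimal presentation $1\to R\to F\xrightarrow{\pi}G\to 1$ with $F$ free of rank $r(G)$ and $R\subseteq\Phi(F)$, where $\Phi(W)=W^{p}[W,W]$ denotes the Frattini subgroup. For open $H\le G$ put $F_{H}=\pi^{-1}(H)$; Schreier's formula applied to the free group $F$ shows that $F_{H}$ is free of rank $1+[G\colon H](r(G)-1)$. Since $H=F_{H}/R$ and $\Phi(H)=\Phi(F_{H})R/R$, one has $r(H)={\rm dim}_{\mathbb{F}_{p}}F_{H}/(\Phi(F_{H})R)$, that is $r(H)={\rm rank}(F_{H})-{\rm dim}_{\mathbb{F}_{p}}\big(\Phi(F_{H})R/\Phi(F_{H})\big)$. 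Hence $r(H)\le 1+[G\colon H](r(G)-1)$ always, with equality precisely when $R\subseteq\Phi(F_{H})$.

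To conclude I would feed the hypothesis into this criterion along the iterated Frattini subgroups $\Phi^{(k)}(F)$ (with $\Phi^{(1)}(F)=\Phi(F)$ and $\Phi^{(k+1)}(F)=\Phi(\Phi^{(k)}(F))$). Taking $H=G$, so $F_{H}=F$, gives $R\subseteq\Phi(F)$. As $F$ is finitely generated, $\Phi(F)$ is open and contains $R$, hence is the preimage of the open subgroup $\Phi(F)/R$ of $G$; the hypothesis for the latter yields $R\subseteq\Phi(\Phi(F))=\Phi^{(2)}(F)$. Iterating, $R\subseteq\Phi^{(k)}(F)$ for every $k$. Finally $\bigcap_{k}\Phi^{(k)}(F)=\{1\}$, because for any pro-$p$-group the lower $p$-central series separates the points and $\Phi^{(k)}(F)$ lies in its $(k+1)$-st term; therefore $R=\{1\}$ and $G\cong F$ is free.

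The main obstacle is this converse, namely converting the numerical rank equalities into the vanishing of the relation subgroup $R$; its crux is the Frattini-iteration, which rests on the triviality of the intersection of the iterated Frattini subgroups of the finitely generated free pro-$p$-group $F$. Staying closer to the machinery already developed here, one could instead induct over a tower of degree-$p$ subextensions: the hypothesis at each degree-$p$ step gives, via Lemma 3.2 (iii), the norm equality $E_{\varepsilon}\subseteq N(L(\varepsilon)/E(\varepsilon))$, and one would try to globalize these data (together with the corresponding equalities over the intermediate fields) into the triviality of the $p$-part of ${\rm Br}(E(\varepsilon))$, whence freeness by the criterion recalled in the Introduction. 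In that route the delicate points would be the descent between $E$ and $E(\varepsilon)$ when $\varepsilon\notin E$ and the reduction of an arbitrary finite extension to a chain of degree-$p$ ones.
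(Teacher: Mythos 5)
Your proposal is correct and follows essentially the same route as the paper: both reduce, via the Galois correspondence and the fact that $L(p)=E(p)$ for finite $L\in I(E(p)/E)$, to the characterization of finitely generated free pro-$p$-groups by the Schreier index formula for their open subgroups, which the paper simply cites (together with Lemma 3.2 (iii)) while you prove it directly by the standard minimal-presentation and iterated-Frattini argument, correctly handled down to the triviality of $\bigcap_{k}\Phi^{(k)}(F)$. You also read the displayed formula as $r(p)_{L}=1+[L\colon E](r(p)_{E}-1)$, which is clearly the intended statement (the printed ``$1+[L\colon E]$'' is a typo, as comparison with Lemma 3.2 (iii) confirms).
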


\begin{proof}
This follows from Lemma 3.2 (iii), Galois theory and the
characterization of free pro-$p$-groups of finite rank by the
indices of their open subgroups (cf. \cite{S1}, Ch. I, 4.2).
\end{proof}

\medskip
Assume that $E$ is a field with $r(p)_{E} \le \infty $, for some $p
\in P(E)$, $p \neq {\rm char}(E)$, take $\varepsilon $, $\varphi $,
$s$ and $l$ as in Lemma 3.2, and put $m = [E(\varepsilon )\colon
E]$. In view of Lemma 3.1, then the action of
$\mathcal{G}(E(\varepsilon )/E)$ on $E(\varepsilon )$ canonically
induces on $E(\varepsilon )$, $E(\varepsilon ) ^{\ast }$ and
Br$(E(\varepsilon )) _{p}$ structures of modules over the group ring
$\mathbb Z[\mathcal{G}(E(\varepsilon )/E)]$. Note further that the
$\mathbb Z _{p}$-submodules Br$(E(\varepsilon )) _{p,j}$ $j = 0,
\dots , m - 1$, of Br$(E(\varepsilon )) _{p}$ are $\mathbb
Z[\mathcal{G}(E(\varepsilon )/E)]$-submodules too. Denote by $Y$ any
of the groups $E(\varepsilon ) ^{\ast }/E(\varepsilon ) ^{\ast p}$,
$_{p}{\rm Br}(E(\varepsilon ))$ and $H _{p}(E(\varepsilon )) = {\rm
Br}(E(p)(\varepsilon )/E(\varepsilon )) \cap _{p}{\rm
Br}(E(\varepsilon ))$. The preceding observations show that $Y$ can
be viewed in a natural manner as a module over the group ring
$\mathbb F _{p} [\mathcal{G}(E(\varepsilon )/E)]$, which satisfies
the following:
\par
\medskip
(3.3) The set $Y _{j} = \{y _{j} \in Y\colon \ \varphi (y _{j}) = s
^{j}y _{j}\}$ is an $\mathbb F _{p}[\mathcal{G}(E(\varepsilon
)/E)]$-submodule of $Y$, for $j = 0, 1, \dots , m - 1$; the sum of
submodules $Y _{0}, \dots , Y _{m-1}$, is direct and equal to $Y$.
\par
\medskip\noindent
In view of the Merkur'ev-Suslin theorem \cite{MS}, (16.1), this
ensures that
\par
\medskip
(3.4) $_{p}{\rm Br}(E(\varepsilon ))$ is generated by the similarity
classes of symbol division $E(\varepsilon )$-algebras $A
_{\varepsilon }(E(\varepsilon ); a, b)$ of Schur index $p$, where
$a$ and $b$ lie in he union of the sets $E _{\varepsilon
,j} = \{a _{j} \in E(\varepsilon ) ^{\ast }\colon \ \varphi (a
_{j})a _{j} ^{-s ^{j}} \in E(\varepsilon ) ^{\ast p}\}$, $j = 0, 1,
\dots , m - 1$.
\par
\medskip\noindent
It is easily obtained from Kummer theory and elementary properties
of cyclic $E(\varepsilon )$-algebras (cf. \cite{L}, Ch. VIII, Sect.
6, and \cite{P}, Sect. 15.1, Corollary~a) that (3.4) can be
supplemented as follows:
\par
\medskip
(3.5) If $a \in E _{\varepsilon ,j'}$ and $b \in E _{\varepsilon
,j''}$, then $[A _{\varepsilon }(E(\varepsilon ); a, b)] \in
_{p}{\rm Br}(E(\varepsilon )) _{\bar j}$, where $\bar j$ is
the remainder of $j ^{\prime } + j ^{\prime \prime } - 1$ modulo
$m$.
\par
\medskip\noindent
Denote for brevity $H _{p}(E(\varepsilon )) _{1}$ by $H _{p,1}(E)$.
Using (3.4) and (3.5) (see also \cite{P}, Sect. 15.1, Proposition~b,
and for more details, \cite{Ch6}, Sect. 3), one concludes that $\mathcal{G}(E(p)/E)$ is a free pro-$p$-group if and only if any of
the following three equivalent conditions holds:
\par
\medskip
(3.6) (i) $H _{p,1}(E) = \{0\}$;
\par
(ii) $E _{\varepsilon } \subseteq N(L(\varepsilon )/E(\varepsilon
))$, for each finite extension $L$ of $E$ in $E(p)$;
\par
(iii) $L _{\varepsilon } \subseteq N(L ^{\prime }(\varepsilon
)/L(\varepsilon ))$ whenever $L ^{\prime }$ is a finite extension of
$E$ in $E(p)$, $L \in I(L ^{\prime }/E)$ and $L$ is a maximal
subfield of $L ^{\prime }$.
\par
\medskip
It is known that the class of Demushkin groups is closed under
taking open subgroups (see, e.g., \cite{S1}, Ch. I, 4.5). Our next
result characterizes Demushkin groups among finitely-generated
Galois groups of maximal $p$-extensions (for a proof, see
\cite{Ch6}); characterizations of Demushkin groups in the class of
finitely generated one-relator pro-$p$-groups can be found in
\cite{DL}.

\smallskip
\begin{prop}
Let $E$ be a field such that $r(p)_{E} \in \mathbb N$, for some $p \in
P(E)$, $p \neq {\rm char}(E)$, and let $\varepsilon $ be a primitive
$p$-th root of unity in $E _{\rm sep}$. Then the following
conditions are equivalent:
\par
{\rm (i)} $\mathcal{G}(E(p)/E)$ is a Demushkin group;
\par
{\rm (ii)} $H _{p,1}(E)$ is of order $p$ and $H _{p,1}(E) \subseteq
{\rm Br}(L(\varepsilon )/E(\varepsilon ))$, provided $L \in
I(E(p)/E)$ and $[L\colon E] = p$;
\par
{\rm (iii)} For each $\alpha \in E _{\varepsilon } \setminus
E(\varepsilon ) ^{\ast p}$ and $\Delta _{p} \in d(E(\varepsilon ))$
with $[\Delta _{p}] \in H _{p,1}(E)$, $\Delta _{p}$ is
$E(\varepsilon )$-isomorphic to $A _{\varepsilon } (E(\varepsilon );
\alpha , \beta )$, for some $\beta \in E _{\varepsilon }$);
\par
{\rm (iv)} $r(p)_{R} = 2 + [R\colon E](r(p)_{E} - 2)$ in case $R
\in I(E(p)/E)$ and $[R\colon E] \in \mathbb N$.
\end{prop}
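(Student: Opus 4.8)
The plan is to translate the two defining properties of a Demushkin group---that $H^2(\mathcal G(E(p)/E),\mathbb F_p)$ has order $p$, and that the cup-product map is surjective against every nonzero class---into the Brauer-theoretic language already set up in (3.3)--(3.6), and then to read off the rank formula (iv) by an Euler-characteristic computation. Writing $G=\mathcal G(E(p)/E)$, the first step is to record the cohomological dictionary. Since $m=[E(\varepsilon)\colon E]$ divides $p-1$ and is therefore prime to $p$, restriction identifies $H ^{i}(G,\mathbb F _{p})$ with the $s$-eigencomponent, i.e. the summand $Y _{1}$ in the decomposition (3.3), of the corresponding cohomology over $E(\varepsilon )$. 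Concretely, Kummer theory yields $H ^{1}(G,\mathbb F _{p})\cong E _{\varepsilon }/E(\varepsilon ) ^{\ast p}$, whose $\mathbb F _{p}$-dimension is $r(p) _{E}$ by the computation in the proof of Lemma 3.2, while $H ^{2}(G,\mathbb F _{p})\cong H _{p,1}(E)$; under these identifications the cup product becomes the symbol pairing $(a,b)\mapsto [A _{\varepsilon }(E(\varepsilon );a,b)]$, which lands in the index-$1$ component by (3.5) and generates $H _{p,1}(E)$ by the Merkur'ev--Suslin theorem (3.4).

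With this dictionary, the equivalence (i)$\Leftrightarrow$(ii) is essentially a restatement. The condition that $H ^{2}(G,\mathbb F _{p})$ have order $p$ becomes ``$H _{p,1}(E)$ is of order $p$''. For a fixed $\alpha \in E _{\varepsilon }\setminus E(\varepsilon ) ^{\ast p}$, corresponding to a degree-$p$ extension $L$ with $L(\varepsilon )=E(\varepsilon )(\sqrt[p]{\alpha })$, the image of cup-product with $\alpha $ is exactly the set of classes of $H _{p,1}(E)$ expressible as symbols $(\alpha ,\beta )$, which by the description of the relative Brauer group of a cyclic degree-$p$ extension equals $H _{p,1}(E)\cap \mathrm{Br}(L(\varepsilon )/E(\varepsilon ))$. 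Hence surjectivity of this cup product for every nonzero $\alpha $ is precisely the containment $H _{p,1}(E)\subseteq \mathrm{Br}(L(\varepsilon )/E(\varepsilon ))$ for every such $L$, which is (ii). The implication (ii)$\Rightarrow$(iii) is then immediate: once $H _{p,1}(E)$ has order $p$, its single nonzero class is split by every $L(\varepsilon )$, so the index-$p$ division algebra $\Delta _{p}$ representing it is $E(\varepsilon )$-isomorphic to the corresponding symbol $A _{\varepsilon }(E(\varepsilon );\alpha ,\beta )$ for each $\alpha $.

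For the equivalence with (iv) I would argue by Euler characteristics. If $G$ is Demushkin then, since open subgroups of Demushkin groups are again Demushkin (\cite{S1}, Ch.~I, 4.5), every open $U\le G$ has $H ^{2}(U,\mathbb F _{p})$ of dimension one, so $\chi (U)=2-r(U)$; multiplicativity $\chi (U)=[G\colon U]\chi (G)$ then gives $r(p) _{R}=2+[R\colon E](r(p) _{E}-2)$, which is (iv). For the converse I would feed (iv) back through Lemma 3.2: applied to the degree-$p$ extensions $L$ it forces the norm index $t$ of Lemma 3.2 to equal $r(p) _{E}-1$ uniformly in $L$, which is the numerical signature distinguishing the Demushkin value $r(p) _{E}-2$ from the free value $r(p) _{E}-1$ occurring in the Corollary and in (3.6); via (3.4)--(3.6) this uniform norm condition is exactly the containment of (ii).

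The step I expect to be the main obstacle is the converse direction---recovering the cohomological structure of a Demushkin group from the purely numerical formula (iv), together with the dual task of extracting the order-$p$ statement from (iii). The delicate point is that (iv) must be exploited not over $E$ alone but over every intermediate field $R\in I(E(p)/E)$ simultaneously, so that Lemma 3.2 pins the norm-group indices to their Demushkin values at all levels; the Merkur'ev--Suslin description of $H _{p,1}(E)$ must then upgrade this uniform norm data to both the order-$p$ statement and the full non-degeneracy of the symbol pairing, rather than merely to the containment. Additional care is needed when $p=2$, where formally real (Pythagorean) ground fields have to be excluded exactly as in the $t=0$ analysis carried out in the proof of Lemma 3.2.
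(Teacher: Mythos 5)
A preliminary remark on the comparison: the paper does not prove Proposition 3.4 in this text at all --- it defers the proof to the preprint \cite{Ch6} and records only that (i)$\Rightarrow$(iv) follows from Koch's Proposition~5.4, Galois theory and ${\rm cd}=2$ for Demushkin groups. That is exactly the Euler--characteristic argument you give, and your cohomological dictionary ($H^{1}(\mathcal{G}(E(p)/E),\mathbb F_{p})\leftrightarrow E_{\varepsilon}/E(\varepsilon)^{\ast p}$ of dimension $r(p)_{E}$, $H^{2}\leftrightarrow H_{p,1}(E)$, cup product $\leftrightarrow$ symbol pairing landing in the index-$1$ eigencomponent) is the same eigencomponent formalism the paper sets up in (3.3)--(3.6). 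Granting that dictionary, your arguments for (i)$\Leftrightarrow$(ii), (ii)$\Rightarrow$(iii) and (i)$\Rightarrow$(iv) are sound and are surely close to the intended ones.

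The genuine gap is that your cycle of implications is never closed: you give no actual argument for (iii)$\Rightarrow$(i) or for (iv)$\Rightarrow$(i)/(ii), and these are precisely the directions carrying the content of the proposition. Concretely, from (iv) and Lemma 3.2 (ii) you correctly extract $t=r(p)_{E}-1$ for every degree-$p$ extension $L/E$, i.e.\ that each cup-product map $\cup\,\alpha\colon H^{1}\to H^{2}$ has one-dimensional image; but this does not yet yield $\dim_{\mathbb F_{p}}H_{p,1}(E)=1$, nor that these one-dimensional images coincide. Merkur'ev--Suslin gives that symbols span $H_{p,1}(E)$, but symbols with different first slots could a priori span a space of dimension greater than one, and the naive attempt to force the images of $\cup\,\alpha$ and $\cup\,\alpha'$ to agree by comparing both with $\cup\,(\alpha\alpha')$ only produces proportionality relations that are consistent with independent images. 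Likewise (iii) is a pure surjectivity statement and does not formally imply the order-$p$ condition on $H^{2}$. You explicitly flag both of these as ``the main obstacle,'' but you do not supply the additional input (uniform exploitation of the hypothesis over all $R\in I(E(p)/E)$, plus a genuine non-degeneracy argument for the symbol pairing) that would resolve them; as written, the proposal establishes (i)$\Rightarrow$(ii)$\Rightarrow$(iii), (i)$\Rightarrow$(iv) and a numerical consequence of (iv), but not the equivalence of the four conditions.
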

\par
\medskip
Note that the implication (i)$\to $(iv) in Proposition 3.4 follows
from \cite{Ko1}, Proposition~5.4, Galois theory and the fact that
every Demushkin group $P$ is of cohomological dimension cd$(P) = 2$
(cf. \cite{S1}, Ch. I, 4.5).
\par
\medskip
\begin{rema}
Let $P _{1}$ and $P _{2}$ be pro-$p$-groups, for a given $p \in
\mathbb P$, such that $P _{j} \cong \mathbb Z _{p} ^{m _{j}} \rtimes
\Phi _{1}$, $j = 1, 2$, where $m _{j}$ is an integer $\ge 0$ and
$\Phi _{j}$ is a free pro-$p$-group with $r(\Phi _{j}) \ge 2$ or a
Demushkin group with $r(\Phi _{j}) \ge 3$, for each index $j$. It
follows from (1.1), Corollary 3.3 and Proposition 3.4 (iii) that if
$R _{j}$ is a closed proper subgroup of $\Phi _{j}$ and $r(R _{j}) =
2$, then the index of $R _{j}$ in $\Phi _{j}$ is infinite. Hence, by
\cite{Lab1}, Theorem~2 (ii), $R _{j}$ is a free pro-$p$-group. This,
combined with Corollary 3.3 and Galois cohomology (see \cite{S1},
Ch. I, 4.1), implies that if $A _{j} \le \Phi _{j}$, $A _{j} \neq
\{0\}$ and $A _{j}$ is abelian and closed in $\Phi _{j}$, then $A
_{j} \cong \mathbb Z _{p}$. Since the set of closed normal subgroups
of $\Phi _{j}$ is closed under taking centralizers, and the
automorphism group Aut$(\mathbb Z _{p})$ is isomorphic to the direct
sum $\mathbb Z/(p - 1)\mathbb Z \oplus \mathbb Z _{p}$, it is also
clear that $A _{j}$ is not normal in $\Phi _{j}$. These observations
indicate that if $P _{1} \cong P _{2}$, then $\Phi _{1} \cong \Phi
_{2}$ and $m _{1} = m _{2}$.
\end{rema}

\medskip
\section{\bf Proofs of Theorems 1.1 and 1.2 in the case of $v(K) =
G(K)$}

The main purpose of this Section is to prove Theorems 1.1 and 1.2 in
the special case where $v(K) = G(K) \neq pv(K)$. Let $(K, v)$ be a
$p$-Henselian field with char$(\widehat K) = p$. First we show that
$\widehat K$ is perfect, provided that $r(p)_{K} \in \mathbb N$. This
result is presented by the following lemma (proved in \cite{Pop},
(2.7), and \cite{E1}, Proposition~3.1, under heavier assumptions
like the one that the group $K ^{\ast }/K ^{\ast p}$ is finite).
This lemma does not require that $v(K) \neq pv(K)$.

\medskip
\begin{lemm}
Assume that $(K, v)$ is a $p$-Henselian field, such that $\widehat
K$ is imperfect and {\rm char}$(\widehat K) = p$. Then $r(p)_{K} =
\infty $. Moreover, if {\rm char}$(K) = p$ or $v(p) \in pv(K)$, then
there exists $\Lambda \in I(K(p)/K)$, such that $[\Lambda
\colon K] = [\widehat \Lambda \colon \widehat K] = p$ and $\widehat
\Lambda $ is purely inseparable over $\widehat K$.
\end{lemm}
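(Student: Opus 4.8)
The plan is to prove the two assertions of Lemma 4.1 separately, in both cases reducing the problem to the construction of many distinct degree-$p$ extensions inside $K(p)$ by exploiting the imperfectness of $\widehat K$. For the first assertion, I would argue that an imperfect residue field of characteristic $p$ forces $r(p)_{K} = \infty$. Since $\widehat K$ is imperfect, there is an element $\bar b \in \widehat K \setminus \widehat K ^{p}$; more strongly, the quotient $\widehat K ^{\ast }/\widehat K ^{\ast p}$ (together with the additive Artin-Schreier data when ${\rm char}(K) = p$) is infinite, because $\widehat K$ being imperfect yields infinitely many $\mathbb F _{p}$-independent classes. The key step is then to lift this infinitude up to $K$: using Lemma 2.3, each separable degree-$p$ extension $\widetilde F$ of $\widehat K$ inside $\widehat K(p)$ has an inertial lift $F \in I(K(p)/K)$, and distinct $\widetilde F$ produce $K$-linearly disjoint lifts. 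By (2.4)(ii) the residue field of $(K(p), v _{K(p)})$ is $\widehat K$-isomorphic to $\widehat K(p)$, so $\mathcal{G}(K(p)/K)$ surjects continuously onto $\mathcal{G}(\widehat K(p)/\widehat K)$; since an imperfect field of characteristic $p$ has infinite rank $r(p)_{\widehat K}$, the same holds for $K$.

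For the moreover clause, I would treat the two hypotheses ${\rm char}(K) = p$ and $v(p) \in pv(K)$ together, the point being that under either one the wild (purely inseparable) part of the residue extension becomes accessible. When ${\rm char}(K) = p$, pick $\bar b \in \widehat K \setminus \widehat K ^{p}$ and a representative $b \in O _{v}(K)$; the polynomial $X ^{p} - b$ is irreducible over $K$ (its reduction $X ^{p} - \bar b$ is irreducible over $\widehat K$ since $\bar b \notin \widehat K ^{p}$), and the extension $\Lambda = K(b ^{1/p})$ is the one sought: it has degree $p$, lies in $K(p)$ since $\mathcal{G}(K(p)/K)$ is free pro-$p$ in characteristic $p$, and its residue field $\widehat \Lambda = \widehat K(\bar b ^{1/p})$ is purely inseparable of degree $p$ over $\widehat K$ with $[\Lambda \colon K] = [\widehat \Lambda \colon \widehat K] = p$. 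The case $v(p) \in pv(K)$ is the delicate one: here ${\rm char}(K) = 0$ but ${\rm char}(\widehat K) = p$, and I would again start from $\bar b \in \widehat K \setminus \widehat K ^{p}$ with lift $b$, but now form the Kummer-type extension of $X ^{p} - (1 + \pi b)$ (or an Artin-Schreier-Kummer hybrid) where $v(p) = pv(\pi)$ is used to arrange that the extension is defated and immediately purely inseparable on residue fields.

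The main obstacle I expect is precisely the mixed-characteristic case $v(p) \in pv(K)$: in characteristic $0$ a radical extension $K(b ^{1/p})$ is typically ramified rather than residually purely inseparable, and one must use the hypothesis $v(p) \in pv(K)$ to manufacture a degree-$p$ extension whose residue extension is exactly the Artin-Schreier-type purely inseparable extension $\widehat K(\bar b ^{1/p})/\widehat K$. The technique is to choose the uniformizing adjustment so that $v(p)/p \in v(K)$ lets the binomial expansion of the $p$-th power map align the $p$-adic valuation of $p$ with a first-order perturbation, turning the Kummer equation into one whose reduction modulo $M _{v}(K)$ is the inseparable polynomial $X ^{p} - \bar b$; verifying irreducibility of the lifted polynomial over $K$ and that $\Lambda \in I(K(p)/K)$ (rather than merely $K _{\rm sep}$) then requires the $p$-Henselian hypothesis together with the defect formula $[\Lambda \colon K] = [\widehat \Lambda \colon \widehat K] = p$, confirming there is no residue-degree-ramification tradeoff. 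Once this single wild degree-$p$ extension is produced, the claim $\Lambda \subseteq K(p)$ follows since $[\Lambda \colon K] = p$ makes $\Lambda/K$ automatically a $p$-extension.
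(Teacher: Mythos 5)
Your proof of the first assertion rests on a false premise. You reduce $r(p)_{K} = \infty $ to the claim that an imperfect field of characteristic $p$ has $r(p)_{\widehat K} = \infty $, and then pull this back through the surjection $\mathcal{G}(K(p)/K) \to \mathcal{G}(\widehat K(p)/\widehat K)$ coming from inertial lifts. But imperfectness of $\widehat K$ says nothing about $r(p)_{\widehat K}$: in characteristic $p$ the group $\mathcal{G}(\widehat K(p)/\widehat K)$ is governed by Artin--Schreier theory, i.e. by $\widehat K/\wp (\widehat K)$ with $\wp (x) = x ^{p} - x$, not by $\widehat K ^{\ast }/\widehat K ^{\ast p}$ ($p$-th roots in characteristic $p$ are purely inseparable and produce no separable extensions). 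A separable closure of $\mathbb F _{p}(t)$ is imperfect yet satisfies $\widehat K(p) = \widehat K$, so $r(p)_{\widehat K} = 0$; for such a residue field your argument proves nothing. The inertial-lift mechanism can only ever yield $r(p)_{\widehat K}$ independent extensions. The actual source of infinitude in the paper is the opposite phenomenon: imperfectness forces infinitely many \emph{non-inertial} degree-$p$ extensions, namely separable $\Lambda /K$ whose residue extensions are purely inseparable. In characteristic $p$ these are the root fields of the Artin--Schreier polynomials $X ^{p} - X - \pi ^{-p}ab$, $b \in B$, with $\pi \in M _{v}(K)$ nonzero and $B$ lifting a basis of $\widehat K ^{p}$ over the prime field; the negative value of $\pi ^{-p}ab$ makes the residue equation degenerate to the inseparable $Y ^{p} = \widehat{ab}$ while the extension upstairs stays cyclic, and the independence of the cosets $ab\pi ^{-p} + \wp (K)$ gives $r(p)_{K} = \infty $.

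The same confusion undermines your construction of $\Lambda $ in the \emph{moreover} clause when ${\rm char}(K) = p$: the field $K(b ^{1/p})$ with $b \notin K ^{p}$ is purely inseparable over $K$, hence is not contained in $K _{\rm sep}$, let alone in $K(p)$; the freeness of $\mathcal{G}(K(p)/K)$ is irrelevant. What is required is a \emph{separable} degree-$p$ extension with purely inseparable residue extension, which is exactly what the Artin--Schreier polynomial above supplies. In the mixed-characteristic case your diagnosis of the difficulty is accurate, but the element $1 + \pi b$ with $v(p) = pv(\pi )$ does not do the job: a Newton polygon computation shows the root of $X ^{p} - (1 + \pi b)$ lies at value $v(p)/p ^{2}$, so the extension is in general totally ramified with trivial residue extension. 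The paper instead perturbs by $(\varepsilon - 1) ^{p}\pi ^{-p}a$ (which has positive value $v(\varepsilon - 1)$), forms the product $\rho (a) = \prod _{j} \varphi ^{j}(1 + (\varepsilon - 1) ^{p}\pi ^{-p}a) ^{l ^{j}}$ to land in $K _{\varepsilon }$, and uses Albert's theorem to descend the Kummer extension from $K(\varepsilon )$ to a degree-$p$ subextension of $K(p)/K$; the rescaled polynomial then reduces to $X ^{p} - ma$ modulo the maximal ideal. Both this descent step (indispensable when $\varepsilon \notin K$, since $K(c ^{1/p})$ need not lie in $K(p)$) and the preliminary reduction (4.1) to the Archimedean Henselian case are absent from your sketch.
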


\begin{proof}
Let $\widetilde {\mathbb F}$ be the prime subfield of $\widehat K$,
$\widetilde B$ a basis of the field $\widehat K ^{p} = \{\hat u
^{p}\colon \ \hat u \in \widehat K\}$ as a vector space over
$\widetilde {\mathbb F}$, and $B$ a (full) system of preimages of
the elements of $\widetilde B$ in $O _{v}(K)$. The condition on
$\widehat K$ guarantees that $\widetilde B$ is infinite. Suppose
first that char$(K) = p$, put $\rho (K) = \{\alpha ^{p} - \alpha
\colon \ \alpha \in K\}$, denote by $\mathbb F$ the prime subfield
of $K$, and fix a nonzero element $\pi \in M _{v}(K)$. Clearly, the
natural homomorphism of $O _{v}(K)$ upon $\widehat K$ induces an
isomorphism $\mathbb F \cong \widetilde {\mathbb F}$. Note also that
$\rho (K)$ is an additive subgroup of $K$, and by the Artin-Schreier
theorem (cf. \cite{L}, Ch. VIII, Sect. 6), the group $K/\rho (K)$
can be canonically viewed as an $\mathbb F$-vector space of
dimension $r(p)_{K}$. It follows from the $p$-Henselian property of
$K$ and the Artin-Schreier theorem that, for each $a \in O _{v}(K)$
with $v(a) = 0$ and a residue class $\hat a \notin K ^{p}$, the root
field $\Lambda _{a}$ of the polynomial $X ^{p} - X - \pi ^{-p}a$ is
a cyclic extension of $K$, such that $[\Lambda _{a}\colon K] =
[\widehat \Lambda _{a}\colon \widehat K] = p$ and $\widehat \Lambda
_{a}/\widehat K$ is purely inseparable. This implies that the cosets
$ab\pi ^{-p} + \rho (K)$, $b \in B$, are linearly independent over
$\mathbb F$, which proves Lemma 4.1 in case char$(K) = p$.
\par
Assume now that char$(K) = 0$, $\varepsilon $ is a primitive $p$-th
root of unity in $K _{\rm sep}$, and $[K(\varepsilon )\colon K] = m$.
Our first objective is to obtain the following reduction:
\par
\medskip
(4.1) It suffices for the proof of Lemma 4.1 to consider the special
case where $v(K)$ is Archimedean, $v(p) \in pv(K)$ and $v$ is
Henselian.
\par
\medskip
\noindent It is well-known that $\prod _{j=1} ^{p-1} (1 -
\varepsilon ^{j}) = p$ and $\prod _{j'=0} ^{p-1} (\varepsilon _{1} -
\varepsilon  ^{j'}) = \varepsilon - 1$, where $\varepsilon _{1} \in
K _{\rm  sep}$ is a $p$-th root of $\varepsilon $. As char$(\widehat
K) = p$, this implies that $pv ^{\prime }(\varepsilon _{1} - 1) = v
^{\prime }(\varepsilon - 1)$ and $(p - 1)v ^{\prime }(\varepsilon -
1) = v(p)$, for every valuation $v ^{\prime }$ of $K(\varepsilon
_{1})$ extending $v$. Since cyclotomic field extensions are abelian
and $m \mid (p - 1)$ (cf. \cite{L}, Ch. VIII, Sect. 3), these
results enable one to deduce from Galois theory and Ostrowski's
theorem that $K(p) \cap K(\varepsilon _{2})$ possesses a subfield
$L$, such that $[L\colon K] = p$ and $v(p) \in pv(L)$. Hence, by
Lemma 3.2, one may assume for the proof of Lemma 4.1 that $v(p) \in
pv(K)$. Replacing $(K, v)$ by $(K _{G(K)}, \hat v _{G(K)})$ and
applying (2.1) and Proposition 2.1, one sees that our considerations
further reduce to the special case where $G(K) = v(K)$. In this
case, it is easily deduced from Zorn's lemma that Is$_{v}(E)$ has a
maximal element $\Omega $ with respect to set-theoretic inclusion.
Observing that char$(K _{\Omega }) = p$, one obtains from (2.1) that
$\widehat K$ is perfect, provided that $K _{\Omega }$ is of the same
kind. Thus it turns out that it is sufficient to prove Lemma 4.1 in
the special case where $v(K)$ is Archimedean. Using the
Grunwald-Wang theorem, one arrives at the conclusion that every
cyclic extension $L$ of $K _{v}$ of degree $p$ is $K$-isomorphic to
$\widetilde L \otimes _{K} K _{v}$, for some $\widetilde L \in
I(K(p)/K)$ with $[\widetilde L\colon K] = p$. In view of Galois
theory, the subnormality of proper subgroups of finite $p$-groups,
and the $p$-Henselian property of $v$, this implies that $K _{v}(p)$
is $K$-isomorphic to $K(p) \otimes _{K} K _{v}$, whence
$\mathcal{G}(K _{v}(p)/K _{v}) \cong \mathcal{G}(K(p)/K)$.
Therefore, one may assume for the proof of Lemma 4.1 that $K = K
_{v}$. As $v(K) \le \mathbb R$, this ensures that $v$ is Henselian
(see, e.g., \cite{L}, Ch. XII), which yields (4.1).
\par
It remains for us to prove Lemma 4.1 in the case pointed out by
(4.1). For each $a \in O _{k}$ with $v(a) = 0$ and $\hat a \notin
\widehat K ^{p}$, put $\rho (a) = \prod _{j=0} ^{m-1} \varphi ^{j}
(1 + (\varepsilon - 1) ^{p}\pi ^{-p}a) ^{l ^{j}}$, where $\pi $ is
an element of $K ^{\ast }$ of value $v(\pi ) = p ^{-1}v(p)$. It is
easily verified that $\rho (a) \in K _{\varepsilon }$. Since $m \mid
(p - 1)$ and $(p - 1)v ^{\prime }(\varepsilon - 1) = v(p)$, one also
obtains by direct calculations that $v ^{\prime }(\rho (a) - 1) = v
^{\prime }(\varepsilon - 1)$. Similarly, it is proved that the
polynomial $h _{a}(X) = \pi ^{p}(\varepsilon - 1) ^{-p}.g
_{a}((\varepsilon - 1)\pi ^{-1}X)$, where $g _{a}(X) = (X + 1) ^{p}
- \rho (a)$, lies in $O _{v'}(K(\varepsilon ))[X]$ and is congruent
to $X ^{p} - ma$ modulo $M _{v'}(K(\varepsilon ))[X]$. As $v(m ^{p}
- m) \ge v(p)$ and $\tilde a \notin \widehat K$, these calculations
show that $h _{a}$ and $g _{a}$ are irreducible over $K(\varepsilon
)$, whence $\rho (a) \notin K(\varepsilon ) ^{\ast p}$. In view of
the definition of $B$, they also lead to the following conclusion:
\par
\medskip
(4.2) The co-sets $\rho (ab)K(\varepsilon ) ^{\ast p}$, $b \in B$,
are linearly independent over $\mathbb F _{p}$.
\par
\medskip\noindent
Hence, by Albert's theorem, the extension $L _{a} ^{\prime }$ of
$K(\varepsilon )$ in $K _{\rm sep}$ obtained by adjunction of a
$p$-th root of $\rho (a)$, equals $L _{a}(\varepsilon )$, for some
$L _{a} \in I(K(p)/K)$ with $[L _{a}\colon K] = p$ and $\hat a
\notin \widehat L _{a} ^{\ast p}$. Furthermore, it follows from
(4.2) that the fields $L _{ab}$, $b \in B$, are pairwise distinct,
so the equality $r(p)_{K} = \infty $ becomes an immediate
consequence of Galois theory. Lemma 4.1 is proved.
\end{proof}

\medskip
\begin{rema}
Suppose that $(K, v)$ is a valued field with char$(K) = p$ and $v(K)
\neq pv(K)$, $\rho (K) = \{\alpha ^{p} - \alpha \colon \ \alpha \in
K\}$, and $\pi _{p}$ is an element of $K ^{\ast }$, such that $v(\pi
_{p}) > 0$ and $v(\pi _{p}) \notin pv(K)$. Clearly, the co-sets $\pi
_{p} ^{-(1+p\nu )} + \rho (K)$, $\nu \in \mathbb N$, are linearly
independent over the prime subfield of $K$. Therefore, $K/\rho (K)$
is infinite, so it follows from the Artin-Schreier theorem and
Galois theory that $r(p)_{K} = \infty $ and the polynomials $f
_{n}(X) = X ^{p} - X - \pi _{p} ^{-(1+pn)}$, $n \in \mathbb N$, are
irreducible over $K$. It also turns out that the root field $L _{n}
\in I(K _{\rm sep}/K)$ of $f _{n}$ is a totally ramified extension
of $K$ in $K(p)$ of degree $p$, for each index $n$, and $L _{n'}
\neq L _{n''}$, $n ^{\prime } \neq n ^{\prime \prime }$.
\end{rema}

The main result of this Section is contained in the following lemma.

\medskip
\begin{lemm}
Let $(K, v)$ be a $p$-Henselian field, such that {\rm char}$(K) =
0$, {\rm char}$(\widehat K) = p$ and $v(K) = G(K) \neq pG(K)$. Then
$r(p)_{K} \in \mathbb N$ if and only if $G(K)$ is cyclic and $\widehat K$
is finite. When this holds, $\mathcal{G}(K(p)/K)$ is a (standardly
admissible) Demushkin group or a free pro-$p$-group depending on
whether or not $K _{h(v)}$ contains a primitive $p$-th root of
unity.
\end{lemm}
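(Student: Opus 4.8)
The plan is to reduce the assertion to the case of a complete, discretely valued field of residue characteristic $p$ — that is, a finite extension of $\mathbb Q _{p}$ — where the structure of $\mathcal{G}(K(p)/K)$ is given by (1.1) and (1.2). The key preliminary observations are that $r(p)_{K}$ and $\mathcal{G}(K(p)/K)$ depend only on the field $K$, so that $v$ may be coarsened at will, and that, by Lemma 4.1, the hypothesis $r(p)_{K} \in \mathbb N$ already forces $\widehat K$ to be perfect. The heart of the matter is the Archimedean (rank-one) case; the general case is then obtained from it by passing to the maximal isolated subgroup of $v(K)$.

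For the sufficiency direction, assume $G(K) = v(K)$ is cyclic, hence $v(K) \cong \mathbb Z$, and $\widehat K$ is finite. Then $v$ has rank one, its completion $K _{v}$ is complete and discretely valued of characteristic $0$ with finite residue field $\widehat K$ of characteristic $p$, so $K _{v}$ is a finite extension of $\mathbb Q _{p}$. Using (2.6) together with the Grunwald-Wang descent already employed in the proof of Lemma 4.1, I would obtain $\mathcal{G}(K(p)/K) \cong \mathcal{G}(K _{v}(p)/K _{v})$. Then (1.1) gives $r(p)_{K} \in \mathbb N$, and (1.2) shows that $\mathcal{G}(K(p)/K)$ is a standardly admissible Demushkin group or a free pro-$p$-group according as $\varepsilon \in K _{v}$ or $\varepsilon \notin K _{v}$; since $\varepsilon$ is separable algebraic over $K$, one has $\varepsilon \in K _{v} \cap K _{\rm sep} = K _{h(v)}$, so this dichotomy is the asserted one (cf.\ Proposition 2.2).

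For the necessity direction, suppose $r(p)_{K} \in \mathbb N$, so $\widehat K$ is perfect, and rule out the two bad possibilities. If $\widehat K$ were infinite, then, its multiplicative group being $p$-divisible, all new degree-$p$ extensions would arise from the principal units $1 + M _{v}(K)$: taking $v(\pi ) = p ^{-1}v(p)$ and letting $a \in O _{v}(K)$ range over representatives of an $\mathbb F _{p}$-basis of $\widehat K$, the elements $1 + (\varepsilon - 1) ^{p}\pi ^{-p}a$ produce infinitely many $\mathbb F _{p}$-independent classes exactly as in the proof of Lemma 4.1, forcing $r(p)_{K} = \infty $. If $v(K)$ were non-cyclic, I would pass to the maximal isolated subgroup $\Omega $ of $v(K)$; since $v(K) = G(K)$, we have $v(p) \notin \Omega $, the quotient $v(K)/\Omega $ is Archimedean, and $v _{\Omega }$ is $p$-Henselian by Proposition 2.1. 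When $\Omega \neq \{0\}$ the residue field $L = K _{\Omega }$ is an infinite field of characteristic $p$, and (2.4) supplies a surjection $\mathcal{G}(K(p)/K) \twoheadrightarrow \mathcal{G}(L(p)/L)$; combined with Remark 4.2 in case $\Omega \neq p\Omega $, and with the rank-one case applied to $(K, v _{\Omega })$ (where $v _{\Omega }(K) \neq pv _{\Omega }(K)$ and $L$ is infinite) in case $\Omega = p\Omega $, this again yields $r(p)_{K} = \infty $. When $\Omega = \{0\}$, so $v(K)$ is Archimedean but not discrete, a cofinal family of wild ramification levels produces infinitely many totally ramified degree-$p$ extensions, a mixed-characteristic analogue of Remark 4.2, and once more $r(p)_{K} = \infty $. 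Hence $\widehat K$ is finite and $G(K)$ is cyclic, and the final dichotomy is the one established above.

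The main obstacle I anticipate lies in the necessity direction: verifying the $\mathbb F _{p}$-independence of the explicit families of degree-$p$ extensions in mixed characteristic, which requires the combination of Albert's theorem, Kummer theory and Hilbert's Theorem 90 used in Lemmas 3.2 and 4.1. A secondary but delicate point is the bookkeeping under coarsening: one must confirm, via Proposition 2.1 and (2.1), that $p$-Henselity passes to $v _{\Omega }$ and that the relevant value-group conditions survive — in particular the case $\Omega = p\Omega $, where the hypothesis $v(K) \neq pv(K)$ must be seen to descend to $v _{\Omega }(K) \neq pv _{\Omega }(K)$ — so that the reduction to the rank-one complete case is legitimate.
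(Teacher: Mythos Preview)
Your proposal is correct and follows essentially the same strategy as the paper: reduce to the Archimedean case via the maximal proper isolated subgroup, pass to the completion via Grunwald--Wang and (2.6), invoke (1.1)--(1.2) for sufficiency, and for necessity produce infinitely many independent degree-$p$ extensions by explicit principal-unit constructions (the paper's (4.3)) together with Remark~4.2 on the characteristic-$p$ residue field. Your case split $\Omega \ne p\Omega$ versus $\Omega = p\Omega$ is a harmless reorganization of the paper's argument (which first shows $v_{\overline H}(K)\ne pv_{\overline H}(K)$ by contradiction, then $\overline H=\{0\}$); note only that your sketch for ``$\widehat K$ infinite'' assumes $v(p)\in pv(K)$, whereas the paper's (4.3)(i) handles $v(p)\notin pv(K)$ with a slightly different choice of $\alpha_n$.
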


\begin{proof}
Suppose first that $v(E)$ is Archimedean. Using Grunwald-Wang's
theorem as in the proof of (4.1), one obtains that
$\mathcal{G}(K(p)/K) \cong \mathcal{G}(K _{v}(p)/K _{v})$, which
reduces our considerations to the special case where $v$ is
Henselian. Then the latter assertion of the lemma and the
sufficiency part of the former one follow from (2.6), (1.1) and
(1.2). We show that $r(p)_{K} = \infty $, provided $v(K)$ is
noncyclic or $\widehat K$ is infinite. Our argument relies on the
fact that, in the former case, $pv(K)$ is a dense subgroup of
$v(K)$. Fix a primitive $p$-th root of unity $\varepsilon \in K
_{\rm sep}$, and take $m$, $\varphi $, $s$ and $l$ as in Lemma 3.2.
Consider the sequence $\tilde \alpha _{n} = \prod _{i=0} ^{m-1}
\varphi ^{i}(1 + (\varepsilon - 1)\alpha _{n}) ^{l ^{i}}$, $n \in
\mathbb N$, satisfying the following:
\par
\medskip
(4.3) (i) If $\widehat K$ is infinite and $v(p) \notin pv(K)$, then
$\alpha _{n}$, $n \in \mathbb N$, is a system of representatives in $O
_{v}(K)$ of a subset of $\widehat K$, which is linearly independent
over the prime subfield of $\widehat K$;
\par
(ii) If $v(p) \in pv(K)$ and $\widehat K$ is infinite, then $\alpha
_{n} = \pi ^{-1}\alpha _{n} ^{\prime }$, for each $n \in \mathbb N$,
where $\pi \in K$ is chosen so that $0 < v(\pi ) \le p ^{-1}v(p)$
and $v(\pi ) \notin pv(K)$, and the sequence $\alpha _{n} ^{\prime
}$, $n \in \mathbb N$, is defined as $\alpha _{n}$, $n \in \mathbb
N$, in case (i);
\par
(iii) When $v(K)$ is noncyclic, $\alpha _{n} \in K$, $0 < v(\alpha
_{n}) \le p ^{-1}v(p)$ and $v(\alpha _{n}) \notin pv(K)$, for every
index $n$; also, $v(\alpha _{n})$, $n \in \mathbb N$, are pairwise
distinct.
\par
\medskip\noindent
It follows from (4.3) that $\tilde \alpha _{n} \in K _{\varepsilon
}$, $v _{K(\varepsilon )}(\tilde \alpha _{n}) = 0$ and $v
_{K(\varepsilon )}(\tilde \alpha _{n} - 1) \notin pv(K(\varepsilon
))$, for each $n \in \mathbb N$. Furthermore, (4.3) ensures that the
co-sets $\tilde \alpha _{n}K(\varepsilon ) ^{\ast p}$, $n \in
\mathbb N$, are linearly independent over $\mathbb F _{p}$. These
observations, combined with Albert's theorem and Ostrowski's
theorem, prove that $K$ admits infinitely many totally ramified
extensions of degree $p$ (in $K(p)$). Hence, by Galois theory,
$r(p)_{K} = \infty $, as claimed.
\par
It remains to be proved that $v(K)$ is Archimedean, provided
$r(p)_{K} \in \mathbb N$. The equality $v(K) = G(K)$ means that
$v(p) \notin H$, for any $H \in {\rm Is}_{v}(K)$. This implies that
Is$_{v}(K)$ satisfies the conditions of Zorn's lemma, whence it
contains a minimal element, say, $\overline H$, with respect to
inclusion. We prove that $v _{\overline H}(K) \neq pv _{\overline
H}(K)$ by assuming the opposite. As $v _{\overline H}(K) =
v(K)/\overline H$ and $v(K) \neq pv(K)$, this requires that
$\overline H \neq p\overline H$. Observing also that char$(K
_{\overline H}) = p$, one obtains from Proposition 2.1 and Remark
4.2 that I$(K _{\overline H}(p)/K _{\overline H})$ has a subset
$\{\widetilde K _{n}\colon \ n \in \mathbb N\}$ of totally
ramified extensions of $K _{\overline H}$ (relative to $\hat v
_{\overline H}$) of degree $p$. By (2.4) (i), the inertial lifts $K
_{n}$ of $\widetilde K _{n}$, $n \in \mathbb N$, over $K$ relative
to $v _{\overline H}$, form an infinite subset of $I(K(p)/K)$. In
view of Galois theory, however, our conclusion contradicts the
assumption that $r(p)_{K} \in \mathbb N$, so it follows that
$\overline H = p\overline H$ and $v _{\overline H}(K) \neq pv
_{\overline H}(K)$. It remains to be seen that $\overline H =
\{0\}$. Suppose that $\overline H \neq \{0\}$. Then $\hat v
_{\overline H}$ must be nontrivial, which implies that $K _{H}$ is
infinite. Since $v _{\overline H}$ is $p$-Henselian, char$(K
_{\overline H}) = p$ and $v _{\overline H}(K) \le \mathbb R$, this
leads, by the already proved special case of Lemma 4.3, to the
conclusion that $r(p)_{K} = \infty $. The obtained results shows that
$\overline H = \{0\}$, i.e. $v(K)$ is Archimedean, which completes
our proof.
\end{proof}

\medskip
\section{\bf $p$-divisible value groups}

\medskip
In this Section we prove Theorem 1.1 (i) in the case where $v(K) =
pv(K)$. The corresponding result can be stated as follows:

\medskip
\begin{prop}
Let $(K, v)$ be a $p$-Henselian field, such that {\rm
char}$(\widehat K) = p$ and $v(K) = pv(K)$. Suppose further that $p
\in P(K)$ and $r(p)_{K} \in \mathbb N$. Then $\widehat K$ is perfect
and $\mathcal{G}(K(p)/K)$ is a free pro-$p$-group.
\end{prop}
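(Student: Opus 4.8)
The plan is to prove the two assertions in turn. Perfectness of $\widehat K$ is immediate from Lemma 4.1: its hypotheses (namely that $v$ is $p$-Henselian and ${\rm char}(\widehat K) = p$) are in force, it does not require $v(K) \neq pv(K)$, and it asserts that an imperfect $\widehat K$ forces $r(p)_{K} = \infty$; since $r(p)_{K} \in \mathbb N$ by assumption, $\widehat K$ must be perfect. For the freeness of $\mathcal{G}(K(p)/K)$ I would dispose of the equal-characteristic case at once: if ${\rm char}(K) = p$, then $\mathcal{G}(K(p)/K)$ is free by the Artin--Schreier result recalled in the Introduction. So I assume ${\rm char}(K) = 0$, fix a primitive $p$-th root of unity $\varepsilon$ (which exists as $p \in P(K)$ and $p \neq {\rm char}(K)$), and aim to establish freeness through the cohomological criterion.

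The backbone is (3.6): $\mathcal{G}(K(p)/K)$ is free if and only if $H _{p,1}(K) = \{0\}$, equivalently $K _{\varepsilon } \subseteq N(L(\varepsilon )/K(\varepsilon ))$ for every finite extension $L$ of $K$ in $K(p)$; and by (3.6)(iii) it suffices to verify this inclusion one step at a time, i.e. for degree-$p$ extensions, at every stage of a tower. I would first record the structural consequence of the hypotheses. Since $v(K) = pv(K)$, the valuation is surjective modulo $p$-th powers, so $K ^{\ast } = O _{v}(K) ^{\ast }K ^{\ast p}$; and since $\widehat K$ is perfect, every residue is a $p$-th power, giving $O _{v}(K) ^{\ast } = (1 + M _{v}(K))O _{v}(K) ^{\ast p}$. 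Hence $K ^{\ast } = (1 + M _{v}(K))K ^{\ast p}$, so $K ^{\ast }/K ^{\ast p}$ is generated by principal units. The identical computation applies to $K(\varepsilon )$, whose value group is again $p$-divisible and whose residue field is again perfect, because $[K(\varepsilon )\colon K]$ divides $p - 1$ and is thus prime to $p$; the same is true of any $L \in I(K(p)/K)$ and of $L(\varepsilon )$, so the core situation is stable along the tower.

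Next I would note that $v(K) = pv(K)$ forbids ramification in degree $p$: if $L/K$ has degree $p$ and some $\gamma \in v(L) \setminus v(K)$ existed, then $p\gamma \in v(K) = pv(K)$ would force $p\gamma = p\kappa$ with $\kappa \in v(K)$, whence $\gamma = \kappa \in v(K)$, a contradiction; thus $v(L) = v(K)$ for every degree-$p$ extension $L$ of $K$ in $K(p)$. Combining this with the previous paragraph, the whole norm condition reduces to a single claim: over a $p$-Henselian field $F$ with ${\rm char}(\widehat F) = p$, $\widehat F$ perfect, $v(F) = pv(F)$ and $\varepsilon \in F$, every principal unit of $F$ is a norm from any degree-$p$ extension of $F$ in $F(p)$. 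Granting this, apply it with $F = L(\varepsilon )$ at each step: since $L _{\varepsilon } \subseteq L(\varepsilon ) ^{\ast } = (1 + M)L(\varepsilon ) ^{\ast p}$ and $L(\varepsilon ) ^{\ast p} \subseteq N(L ^{\prime }(\varepsilon )/L(\varepsilon ))$ (norms of base elements are their $p$-th powers), the inclusion $1 + M \subseteq N(L ^{\prime }(\varepsilon )/L(\varepsilon ))$ yields $L _{\varepsilon } \subseteq N(L ^{\prime }(\varepsilon )/L(\varepsilon ))$, which is exactly (3.6)(iii); feeding this into (3.6) (or into Corollary 3.3 via Lemma 3.2(iii)) gives freeness.

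The hard part will be precisely this claim that principal units are norms in an unramified degree-$p$ extension of residue characteristic $p$; I emphasize that one cannot hope to reduce to the inertial case, since in this deeply-ramified situation defect (immediate) degree-$p$ extensions genuinely occur, and it is they that make the group free rather than Demushkin. I would attack the claim by the filtration $\{\nabla _{\gamma }\}$ and successive approximation. Applying the Hensel--Rychlik condition (2.2) over $F$ to $X ^{p} - (1 + m)$ with approximate root $1$ shows $1 + m \in F ^{\ast p}$ as soon as $v(m) > 2v(p)$, so only a bounded shell of values of $m$ matters and $r(p)_{K} < \infty$ renders the relevant $\mathbb F _{p}$-space finite-dimensional. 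On each graded piece of $1 + M$ the norm map degenerates to an additive map built from the residue field, and here the two hypotheses cooperate: perfectness of $\widehat F$ makes the relevant additive/Frobenius maps surjective, while $p$-divisibility of $v(F)$ guarantees that the valuations arising in $\mu ^{p}$ (those of the cross term in $p$ and of the leading term) fall on the same grading as $1 + M$, so that surjectivity on graded pieces can be lifted, via the $p$-Henselian property, to surjectivity onto all of $1 + M$. Carrying out this graded norm computation with a non-discrete $p$-divisible value group in the wildly ramified setting is the one delicate point; everything else is assembly through (3.6).
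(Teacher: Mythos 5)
Your reductions are sound and in fact track the paper's own proof quite closely: perfectness via Lemma 4.1 (which indeed does not need $v(K) \neq pv(K)$), disposal of the case ${\rm char}(K) = p$, passage to $v(K)$ Archimedean and Henselian, the observation that $K(\varepsilon )^{\ast } = (1 + M _{v'}(K(\varepsilon )))K(\varepsilon ) ^{\ast p}$ because the value group is $p$-divisible and the residue field perfect (this is (5.2) of the paper), the absence of ramification in degree $p$, and the reformulation of freeness via the norm criterion (3.6)/Corollary 3.3. But the entire mathematical content of the proposition is concentrated in the one claim you defer -- that principal units are norms from every degree-$p$ extension -- and your sketch of it does not work as stated. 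In the presence of immediate (defect) extensions, which you correctly note are unavoidable here, the norm map on the unit filtration does \emph{not} ``degenerate on each graded piece to an additive map built from the residue field'': that description is valid for inertial and tamely ramified extensions, but for a defect extension of degree $p$ in residue characteristic $p$ there is no such graded model, and perfectness of $\widehat K$ plus $p$-divisibility of $v(K)$ do not by themselves produce surjectivity level by level. So the ``one delicate point'' you flag is not a routine verification; it is the theorem.

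The paper's route around this is genuinely different from what you propose and worth noting. It does not compute the norm group of an arbitrary degree-$p$ extension directly. Instead it first proves the approximation statement (5.3)(i): every class in $K _{\varepsilon }/K(\varepsilon ) ^{\ast p}$ has representatives $\alpha _{n}$ with $v'(\alpha _{n} - 1)$ arbitrarily close to $v(p)$. This is done by an extremal argument -- choosing $d$ with the supremum $c(d)$ maximal among those $< v(p)$, expanding the product $p _{d'}(X) = \prod _{i} (1 + \varphi ^{s ^{i}}(\pi ')X) ^{l ^{i}}$, and using linear algebra over $\widehat K$ on the coefficients to manufacture a better representative, contradicting maximality -- and it requires a separate treatment when $\widehat K$ is too small (passing to inertial extensions $K _{n}$ and descending via torsion-freeness of $\mathcal{G}(K(p)/K)$ and Serre's theorem on cohomological dimension). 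Only then does the paper deduce (5.3)(ii), and not by analyzing $N(L(\varepsilon )/K(\varepsilon ))$ for the given $L$, but by constructing explicit auxiliary extensions $Y _{\delta }$ (root fields of $X ^{p} - 1 - (\varepsilon -1) ^{p}\pi _{\delta } ^{-p}t _{\delta }$) whose norm groups are shown via Hensel--Rychlik to contain $\nabla _{\delta '}(K(\varepsilon ))$, and invoking Albert's symbol-algebra criterion to convert this into the vanishing of all the relevant symbols. Without (5.3)(i), the auxiliary extensions $Y _{\delta }$, and the Albert/symbol step, your argument has no mechanism for handling the defect extensions, so the proof is incomplete at its crucial point.
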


\medskip
Proposition 5.1 generalizes \cite{E1}, Proposition~3.4, which covers
the case where $K$ contains a primitive $p$-th root of unity. Note
that the assumption on $r(p)_{K}$ is essential. Indeed, it follows
from \cite{TY}, Theorem~4.1, and Ostrowski's theorem that there
exists a Henselian field $(F, w)$, such that char$(\widehat
F) = p$, $\widehat F$ is algebraically closed, $F$ contains a
primitive $p$-th root of unity, and Br$(F) _{p} \neq \{0\}$. Since
$_{p}{\rm Br}(F) \cong H ^{2}(\mathcal{G}(F(p)/F), \mathbb F _{p})$
\cite{Ta}, page 265, this means that $\mathcal{G}(F(p)/F)$ is not a
free pro-$p$-group (see \cite{S2}, Ch. I, 4.2).

\medskip
{\it Proof of Proposition 5.1.} As noted in the Introduction, it is
known that $\mathcal{G}(E(p)/E)$ is a free pro-$p$-group, for every
field $E$ of characteristic $p$. Therefore, one may assume for the
proof that char$(K) = 0$, whence the prime subfield of $K$ may be
identified with the field $\mathbb Q$ of rational numbers. First we
show that our proof reduces to the special case where $v(K)$ is
Archimedean. Observe that $H = pH$ and $v _{H}(K) = pv _{H}(K)$, for
each $H \in {\rm Is}_{v}(K)$. This follows from the equality $v(K) =
pv(K)$ and the fact that $H$ is a pure subgroup of $v(K)$. Hence, by
Ostrowski's theorem, finite extensions of $K$ in $K(p)$ are inertial
relative to $G(K)$. It is therefore clear from (2.3) (ii) that
$\mathcal{G}(K(p)/K) \cong \mathcal{G}(K _{G(K)}(p)/K _{G(K)})$.
Thus the proof of Proposition 5.1 reduces to the special case of
$v(K) = G(K)$. This implies that $v(K) \neq H(K)$, where $H(K)$ is
defined as in (2.1) (iii), so the preceding observations indicate
that it suffices for our proof to consider the special case in which
$v(K)$ is Archimedean. Applying now the Grunwald-Wang theorem
(repeatedly, as in the proof of Lemma 4.3), one obtains that
$\mathcal{G}(K(p)/K) \cong \mathcal{G}(K _{h(v)}(p)/K _{h(v)})$,
which allows us to assume further that $v$ is Henselian and $v(K)
\le \mathbb R$.
\par
For the rest of the proof, fix $\varepsilon $, $\varphi $, $s$ and
$K _{\varepsilon }$ as in Lemma 3.2, take $m$ and $l$ as in its
proof, and put $v ^{\prime } = v _{K(\varepsilon )}$. The following
statement is easily deduced from Albert's theorem, (2.6) and the
$p$-Henselian property of $v$:
\par
\medskip
(5.1) An element $\theta \in K _{\varepsilon }$ lies in
$K(\varepsilon ) ^{\ast p}$ if and only if $\theta \in K(\varepsilon
) _{v'} ^{\ast p}$. In particular, this holds when $v ^{\prime }
(\theta - 1) > r$ and $r \in v(K)$ is sufficiently large.
\par
\medskip\noindent
Since $\widehat K$ is perfect, $v(K)$ is Archimedean and $v(K) =
pv(K)$, one deduces the following statement from the density of
$K(\varepsilon )$ in $K(\varepsilon )_{v'}$:
\par
\medskip
(5.2) For each $\alpha \in K _{\varepsilon }$, there exist elements
$\alpha _{1} \in K _{\varepsilon }$ and $\tilde \alpha \in
K(\varepsilon ) ^{\ast }$, such that $\alpha _{1}\tilde \alpha ^{p}
= \alpha $ and $v ^{\prime }(\alpha _{1} - 1) > 0$.
\par
\medskip\noindent
Our objective is to specify (5.2) as follows:
\par
\medskip
(5.3) (i) For each $\alpha \in K _{\varepsilon } \setminus
K(\varepsilon ) ^{\ast p}$, there exists a sequence $\alpha _{n} \in
K _{\varepsilon }$, $n \in \mathbb N$, such that $\alpha _{n}\alpha
^{-1} \in K(\varepsilon ) ^{p}$ and $v _{1}(\alpha _{n} - 1) >
v(p)-n ^{-1}$.
\par
(ii) $K _{\varepsilon } \subseteq N(L(\varepsilon )/K(\varepsilon
))$, for every extension $L$ of $K$ in $K(p)$ of degree $p$.
\par
\medskip
First we show that (5.3) implies $\mathcal{G}(K(p)/K)$ is a free
pro-$p$-group. In view of Corollary 3.3, it is sufficient to
establish that $r(p)_{M} = 1 + p ^{k}(r(p) - 1)$ whenever $k \in
\mathbb N$, $M \in I(K(p)/K)$ and $[M\colon K] = p ^{k}$. Given such
$k$ and $M$, there exists $M _{1} \in I(M/K)$ with $[M _{1}\colon K]
= p$. Using the fact that $M _{1}(p) = K(p)$, applying Lemma 3.2 to
$M _{1}/K$, and (5.3) (i) to $M _{\varepsilon }$ and $v _{M}$, and
proceeding by induction on $k$, one obtains that $r(p)_{M} = 1 + p
^{k}(r(p)_{K} - 1)$, as required. To prove that (5.3) (i)$\to $(5.3)
(ii) we need the following fact:
\par
\medskip
(5.4) An element $z \in K(\varepsilon )$ lies in $K(\varepsilon )
_{v'} ^{\ast p}$ whenever $v ^{\prime }(z - 1) > pv ^{\prime
}(\varepsilon - 1)$.
\par
\medskip\noindent
Statement (5.4) is implied by (2.2), applied to the polynomial
$\tilde h(X) = h((\varepsilon - 1) ^{-1}X)$, where $h(X) = (X + 1)
^{p} - z$. Fix an element $\delta \in v(K)$, $\delta > 0$, and
consider the polynomials $f _{\delta }(X) = X ^{p} - 1 -
(\varepsilon - 1) ^{p}\pi _{\delta } ^{-p}t _{\delta }$, $h _{\delta
}(X) = f _{\delta }(X + 1)$, and $\tilde h _{\delta }(X) = h
_{\delta }(\pi _{\delta }(\varepsilon - 1) ^{-1}X)$, where $\pi
_{\delta } \in K(\varepsilon ) ^{\ast }$, $t _{\delta } \in K
_{\varepsilon } \in \nabla _{0}(K(\varepsilon ))$ and $0 < v
^{\prime }(\pi _{\delta }) < (1/p)v ^{\prime }(\varepsilon - 1) +
\delta /p$. Clearly, $f _{\delta }$, $h _{\delta }$ and $\tilde h
_{\delta }$ share a common root field $Y _{\delta }$ over
$K(\varepsilon )$, which equals $\widetilde Y _{\delta
}K(\varepsilon )$, for some $\widetilde Y _{\delta } \in I(K(p)/K)$
with $[\widetilde Y\colon K] = p$. In addition, it is verified by
direct calculations that if $\tau _{\delta } \in K _{\rm sep}$ is a
zero of $\tilde h$, then $v ^{\prime }(\tilde h ^{\prime }(\tau
_{\delta })) = v ^{\prime }(\pi _{\delta } ^{p-1})$. This, combined
with (2.2), (5.1) and (5.4), shows that $N(Y _{\delta
}/K(\varepsilon ))$ includes the set $K(\varepsilon ) \cap \nabla
_{\delta '}(K(\varepsilon ))$, where $\delta ^{\prime } = (2p - 2)p
^{-1}(v ^{\prime }(\varepsilon - 1) + \delta )$. In view of Albert's
theorem, the obtained result, applied to any sufficiently small
$\delta $, yields (5.3) (i)$\to $(5.3) (ii).
\par
It remains for us to prove (5.3) (i). First we consider the special
case in which $\widehat K$ contains more than $1 + \sum _{j=0}
^{m-1} l ^{j} \colon = \bar l$ elements. Fix a sequence $\hat \alpha
_{\nu }$, $\nu = 1, \dots \bar l$, of pairwise distinct elements of
$\widehat K ^{\ast }$, and let $\alpha _{\nu }$ be a preimage of
$\hat \alpha _{\nu }$ in $O _{v}(K)$, for each index $\nu $. Assume
that (5.3) (i) is false, i.e. there exists $d \in K _{\varepsilon
}$, for which the supremum $c(d)$ of values $v ^{\prime }(d ^{\prime
} - 1)$, $d ^{\prime } \in dK(\varepsilon ) ^{\ast p}$, is less than
$v(p)$. Take $d$ so that $c(d)$ is maximal with respect to this
property, and put $\bar \delta = p ^{-1-\bar l}(v(p) - c(d))$, and
take $d ^{\prime } \in K _{\varepsilon }$ so that $v ^{\prime }(d
^{\prime } - 1) > c(d) - \bar \delta $. Denote by $p _{d'}$ the
polynomial $p _{d'}(X) = \prod _{i=0} ^{m-1} (1 + \varphi ^{s
^{i}}(\pi ^{\prime })X) ^{l ^{i}}$, where $\pi ^{\prime } = d
^{\prime } - 1$, and put $p _{d'}(X) = 1 + \sum _{\nu =1} ^{\bar l}a
_{\nu }X ^{\nu }$, $s _{d'} = \{a _{\nu }\colon \ 1 \le \nu \le \bar
l, v ^{\prime }(a _{\nu }) < v(p)\}$. It is easily verified that
$f(\lambda ) \in K _{\varepsilon }$, for every $\lambda \in M
_{v}(K)$. Now fix an element $\pi \in M _{v}(K)$ so that $c(d) <
v(\pi ^{\prime }\pi ^{p})$ and $v(\pi ^{p}) < \bar \delta $. It
follows from the choice of $d$ and $\pi $ that there exist $b _{1},
\dots , b _{\bar l} \in K(\varepsilon )$ satisfying the following
conditions, for every index $\nu $:
\par
\medskip\noindent
(5.5) $\ b _{\nu }f((\alpha _{\nu }\pi ) ^{p}) ^{-1} \in
K(\varepsilon ) ^{\ast p}$ and $v ^{\prime }(b _{\nu } - 1) \ge v(p)
- \bar \delta $.
\par
\medskip\noindent
More precisely, one can find elements $\tilde b _{1}, \dots , \tilde
b _{\bar l} \in \nabla _{0}(K(\varepsilon ))$ so that $\tilde b
_{\nu } ^{p} = b _{\nu }f((\alpha _{\nu }\pi ) ^{p}) ^{-1}$ and $b
_{\nu }$ satisfies (5.5), for $\nu = 1, \dots , \bar l$. This means
that $K(\varepsilon )$ contains elements $\pi _{1}, \dots , \pi
_{\bar l}$, such that $v ^{\prime }(f((\alpha _{\nu }\pi ) ^{p}) - 1
- \pi _{\nu } ^{p}) \ge v(p) - \bar \delta $, $\nu = 1, \dots , \bar
l$. As $f((\alpha _{\nu }\pi ) ^{p}) - 1 = \sum _{\mu =1} ^{\bar l}
\alpha _{\nu } ^{p\mu }(\pi ^{p}a _{\mu }) ^{\mu }$, for each index
$\nu $, the obtained result and the choice of the sequence $a _{\nu
}$, $\nu = 1, \dots , \bar l$, enables one to deduce from basic
linear algebra that there exist $\tilde \pi _{1}, \dots , \tilde \pi
_{\bar l} \in K(\varepsilon )$, for which $v ^{\prime }(a _{\mu }\pi
^{p\mu } - \tilde \pi _{\mu } ^{p}) \ge v(p) - \bar \delta $, $\mu =
1, \dots , \bar l$. Thus it turns out that $v ^{\prime }(a _{\mu } -
(\tilde \pi _{\mu }.\pi ^{-\mu }) ^{p}) \ge v(p) - \bar \delta -
p\mu v(\pi )$, for every index $\mu $. Hence, there exists $\tilde
\pi \in K(\varepsilon )$, such that $v ^{\prime }(f(1) - 1 - \tilde
\pi ^{p}) = v ^{\prime }(-1 + f(1)(1 + \tilde \pi ) ^{-p}) \ge v(p)
- \bar \delta - p\bar lv(\pi )$. Moreover, it follows from the
choice of $\pi $ that $v(p) - \bar \delta - p\bar lv(\pi ) > c(d)$.
Since, however, $f(1)d ^{-m} \in K(\varepsilon ) ^{\ast p}$, the
assumption on $c(d)$ and our calculations require that $c(d) \ge
v(p) - \bar \delta - p\bar lv(\pi )$. The obtained contradiction
proves (5.3) (i) when $\widehat K$ contains more than $\bar l$
elements.
\par
Suppose finally that $\widehat K$ is of order $\le \bar l$. Then $K$
has an inertial extension $K _{n}$ in $K(p)$ of degree $p ^{n}$, for
each $n \in \mathbb N$. As $\widehat K$ is perfect and $v(K) =
pv(K)$, $K ^{\ast } = K ^{\ast p ^{n}}\nabla _{0}(K)$, which implies
$N(K _{n}/K) = K ^{\ast }$. When $n$ is sufficiently large,
$\widehat K _{n}$ contains more than $\bar l$ elements, so one can
apply (5.3) (i) to $K _{n}(\varepsilon )$ and the prolongation of $v
_{K _{n}}$ on $K _{n}(\varepsilon )$. Since char$(\widehat K) = p$,
$K$ is a nonreal field, by \cite{La}, Theorem~3.16, so it follows
from Galois theory and \cite{Wh}, Theorem~2, that $G(K(p)/K)$ is a
torsion-free group. In view of Galois cohomology (see \cite{S3} and
\cite{S2}, Ch. I, 4.2), these observations show that (5.3) (i) holds
in general, which completes the proof of Proposition 5.1.

\par
\medskip
\begin{coro}
Let $(K, v)$ be a $p$-Henselian field, for a given $p \in \mathbb
P$, and suppose that $v(K) = pv(K)$, $\mathcal{G}(K(p)/K)$ is a
Demushkin group and $r(p)_{K} \ge 3$. Then $p \neq {\rm
char}(\widehat K)$.
\end{coro}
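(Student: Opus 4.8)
The plan is to argue by contradiction, reducing everything to Proposition 5.1. Suppose, to the contrary, that $p = {\rm char}(\widehat K)$. The strategy is to verify that under this supposition all the hypotheses of Proposition 5.1 are met, which forces $\mathcal{G}(K(p)/K)$ to be a free pro-$p$-group, in direct conflict with the assumed Demushkin property.

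First I would confirm the hypotheses of Proposition 5.1. By assumption, $(K, v)$ is $p$-Henselian and $v(K) = pv(K)$, while the present supposition gives ${\rm char}(\widehat K) = p$. Since $\mathcal{G}(K(p)/K)$ is a Demushkin group, it is by definition infinite, so $K(p) \neq K$ and hence $p \in P(K)$. Finally, the inequality $r(p)_{K} \ge 3$ is to be read within the finite-rank framework of this section, so that $r(p)_{K} \in \mathbb N$. Consequently, all the hypotheses of Proposition 5.1 hold, and it yields that $\widehat K$ is perfect and that $\mathcal{G}(K(p)/K)$ is a free pro-$p$-group.

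To close the argument, I would invoke the incompatibility of the two resulting descriptions of $\mathcal{G}(K(p)/K)$. A free pro-$p$-group $P$ has cohomological dimension ${\rm cd}(P) \le 1$ (see \cite{S2}, Ch. I, 4.2), whence $H^{2}(P, \mathbb F _{p}) = \{0\}$; on the other hand, the definition of a Demushkin group requires $H^{2}(\mathcal{G}(K(p)/K), \mathbb F _{p})$ to be of order $p$, and in particular nonzero. This contradiction shows that the supposition ${\rm char}(\widehat K) = p$ is untenable, so $p \neq {\rm char}(\widehat K)$, as claimed. Since the deduction is a direct application of Proposition 5.1 followed by the standard cohomological dichotomy between free and Demushkin pro-$p$-groups, no substantive obstacle arises; the only points requiring attention are the verifications that $p \in P(K)$, guaranteed by the Demushkin group being infinite, and that $r(p)_{K}$ is finite, so that Proposition 5.1 genuinely applies.
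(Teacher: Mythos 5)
Your proof is correct and follows essentially the same route as the paper: the paper likewise notes that a Demushkin group has ${\rm cd} = 2$ and hence cannot be free pro-$p$, and then concludes from Proposition 5.1 that ${\rm char}(\widehat K) \neq p$. Your use of $H^{2} \neq \{0\}$ in place of ${\rm cd} = 2$ to separate the two cases, and your explicit check of the hypotheses of Proposition 5.1 (including the implicit finiteness of $r(p)_{K}$), are just more detailed renderings of the same argument.
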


\begin{proof}
As cd$(\mathcal{G}(K(p)/K)) = 2$, $\mathcal{G}(K(p)/K)$ is not a
free pro-$p$-group (cf. \cite{S2}, Ch. I, 4.2 and 4.5), our
conclusion follows from Proposition 5.1.
\end{proof}

\smallskip
\begin{rema}
Let $(K, v)$ be a $p$-Henselian field, for some $p \in \mathbb P$,
and $V(K)$ the set of valuation subrings of $K$ included in $O
_{v}(K)$ and corresponding to $p$-Henselian valuations of $K$. Then
it can be deduced from Zorn's lemma that $K$ has a valuation $w$,
such that $O _{w}(K)$ is a minimal element of $V(K)$ with respect to
inclusion. Hence, by Proposition 2.1, the residue field $\widetilde
K _{[w]}$ of $(K, w)$ does not possess $p$-Henselian valuations.
Note also that, in the setting of Corollary 5.2,
$\mathcal{G}(K(p)/K) \cong \mathcal{G}(\widetilde K
_{[w]}(p)/\widetilde K _{[w]})$.
\end{rema}

\medskip
Theorem 1.2 and \cite{TY}, Proposition~2.2 and Theorem~3.1, ensure
that if $(K, v)$ is $p$-Henselian with $G(K) \neq pG(K)$,
char$(\widehat K) = p$ and $r(p)_{K} \in \mathbb N$, then finite
extensions of $K$ in $K(p)$ are defectless. As shown in \cite{Ch1},
Sect. 6, this is not necessarily true without the assumption on
$r(p)_{K}$ even when $\mathcal{G}(K(p)/K)$ is a Demushkin group. The
proof relies on a lemma, which is essentially equivalent to the
concluding result of this Section.

\medskip
\begin{prop}
In the setting of Proposition 5.1, suppose that $K$ contains a
primitive $p$-th root of unity $\varepsilon $, $v(K) \le \mathbb R$,
$F$ is an extension of $K$ in $K(p)$ of degree $p$, and $\psi $ is a
generator of $\mathcal{G}(F/K)$. Then there exist $\lambda _{n} \in
F$, $n \in \mathbb N$, such that $0 < v _{F}(\lambda _{n}) < v
_{L}(\psi (\lambda _{n}) - \lambda _{n}) < 1/n$, for each index $n$.
\end{prop}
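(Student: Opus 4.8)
The plan is to write down the $\lambda _{n}$ explicitly via Kummer theory and to control the value $v _{F}(\psi (\lambda _{n})-\lambda _{n})$ by pinning down how well a Kummer generator of $F$ can be approximated by elements of $K$. First I would fix such a generator. Since $\varepsilon \in K$ and $[F\colon K]=p$, the group $\mathcal{G}(F/K)$ is cyclic (an open subgroup of index $p$ of a pro-$p$-group is normal), so $F=K(\beta )$ with $\beta ^{p}=b\in K ^{\ast }\setminus K ^{\ast p}$ and $\psi (\beta )=\varepsilon ^{k}\beta $ for some $k$ prime to $p$. Using $v(K)=pv(K)$, I replace $b$ by $bc ^{-p}$ for a suitable $c\in K ^{\ast }$ so that $v _{F}(\beta )=0$. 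Recall from the proof of Lemma 4.1 that $(p-1)v(\varepsilon -1)=v(p)$, whence $v(\varepsilon -1)=v(p)/(p-1)>0$; since $\varepsilon ^{k}-1=(\varepsilon -1)(1+\dots +\varepsilon ^{k-1})$ and the second factor is a unit, this gives $v _{F}(\psi (\beta )-\beta )=v _{F}((\varepsilon ^{k}-1)\beta )=v(\varepsilon -1)$.

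The crucial step is to compute the breadth $\kappa =\sup \{v _{F}(\beta -\gamma )\colon \gamma \in K\}$. Because $v$ is $p$-Henselian, it extends uniquely to $F$, so $v _{F}\circ \sigma =v _{F}$ for every $\sigma \in \mathcal{G}(F/K)$, and the conjugates of $\beta $ are the $\varepsilon ^{j}\beta $. This yields two facts for every $\gamma \in K$: from $N _{F/K}(\gamma -\beta )=\gamma ^{p}-b$ I get $v(b-\gamma ^{p})=p\,v _{F}(\beta -\gamma )$; and writing $(1-\varepsilon ^{j})\beta =(\beta -\gamma )-(\varepsilon ^{j}\beta -\gamma )$, where the two conjugate terms have equal value, I get $v _{F}(\beta -\gamma )\le v((\varepsilon ^{j}-1)\beta )=v(\varepsilon -1)$. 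Hence $\kappa \le v(\varepsilon -1)$. For the reverse inequality I would apply (5.3)(i) to $\alpha =b$ (note $K _{\varepsilon }=K ^{\ast }$ since $\varepsilon \in K$): it produces $\gamma \in K$ with $v(b-\gamma ^{p})$ as close to $v(p)$ from below as desired, so by the first fact $v _{F}(\beta -\gamma )$ approaches $v(\varepsilon -1)$ from below. Therefore $\kappa =v(\varepsilon -1)$.

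With the breadth identified, the construction is routine. The group $v(K)$ is nontrivial, $p$-divisible and Archimedean, hence dense in $\mathbb R$. Given $n$, I choose $\rho _{n}\in v(K)$ with $v(\varepsilon -1)-1/n<\rho _{n}<v(\varepsilon -1)$; as $\rho _{n}<\kappa $, there is $\gamma ^{\prime }\in K$ with $v _{F}(\beta -\gamma ^{\prime })>\rho _{n}$, and picking $\pi _{n}\in K$ with $v(\pi _{n})=\rho _{n}$ and setting $\gamma _{n}=\gamma ^{\prime }+\pi _{n}$ gives $v _{F}(\beta -\gamma _{n})=\rho _{n}$. I then select $c _{n}\in K ^{\ast }$ with $-\rho _{n}<v(c _{n})<1/n-v(\varepsilon -1)$ (a nonempty interval, since $\rho _{n}>v(\varepsilon -1)-1/n$) and put $\lambda _{n}=c _{n}(\beta -\gamma _{n})$. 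Because $\psi $ fixes $c _{n}$ and $\gamma _{n}$, one has $\psi (\lambda _{n})-\lambda _{n}=c _{n}(\varepsilon ^{k}-1)\beta $, so $v _{F}(\lambda _{n})=v(c _{n})+\rho _{n}$ and $v _{F}(\psi (\lambda _{n})-\lambda _{n})=v(c _{n})+v(\varepsilon -1)$; the chosen inequalities on $v(c _{n})$ and $\rho _{n}$ give exactly $0<v _{F}(\lambda _{n})<v _{F}(\psi (\lambda _{n})-\lambda _{n})<1/n$.

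The main obstacle is the identification $\kappa =v(\varepsilon -1)$. The upper bound $\kappa \le v(\varepsilon -1)$ is a soft consequence of $p$-Henselity, but the matching lower bound is precisely the approximation statement (5.3)(i), which is the substantive ingredient carried over from the proof of Proposition 5.1. A small but essential technical point is that the middle inequality must be \emph{strict}, which is why I realize $\rho _{n}$ strictly below $v(\varepsilon -1)$ through the perturbation $\gamma _{n}=\gamma ^{\prime }+\pi _{n}$ rather than using the approximants of (5.3)(i) directly, where the borderline value $\rho _{n}=v(\varepsilon -1)$ is not excluded a priori.
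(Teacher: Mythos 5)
Your overall strategy---reduce to estimating $\kappa =\sup \{v _{F}(\beta -\gamma )\colon \gamma \in K\}$ for a normalized Kummer generator $\beta $ of $F/K$ and then rescale $\beta -\gamma _{n}$ into the window $(0,1/n)$---is the right one; it is essentially what the sources cited in the paper's own (very short) proof do, the paper itself only recording the reduction to the immediate case and referring to Pop (2.7) and \cite{E1}, Lemma~3.3. The upper bound $\kappa \le v(\varepsilon -1)$ and the norm identity $v(b-\gamma ^{p})=p\,v _{F}(\beta -\gamma )$ are correct. The gap is in the lower bound, which is exactly where the substance lies. Statement (5.3) (i) produces $\gamma $ with $v(b-\gamma ^{p})>v(p)-1/n$; by your own norm identity this gives $v _{F}(\beta -\gamma )>(v(p)-1/n)/p$, so $v _{F}(\beta -\gamma )$ approaches $v(p)/p$, \emph{not} $v(\varepsilon -1)=v(p)/(p-1)$. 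These differ by the fixed positive amount $v(p)/(p(p-1))$. Since in your construction $v _{F}(\psi (\lambda _{n})-\lambda _{n})-v _{F}(\lambda _{n})=v(\varepsilon -1)-\rho _{n}$, both values can lie in $(0,1/n)$ only if $\rho _{n}>v(\varepsilon -1)-1/n$; hence once $1/n<v(p)/(p(p-1))$ the interval you prescribe for $v(c _{n})$ is empty and the argument breaks down. Moreover, since every $\lambda \in F\setminus K$ is a polynomial in $\beta -\gamma $, one cannot evade this by a cleverer choice of $\lambda _{n}$: the proposition is essentially equivalent to $\kappa =v(\varepsilon -1)$ in the immediate case, so that equality must genuinely be proved.

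What you need is that $\sup \{v(bc ^{-p}-1)\colon c\in K ^{\ast }\}$ reaches the threshold $p\,v(\varepsilon -1)$ of (5.4), not merely $v(p)$ as (5.3) (i) asserts. This stronger approximation statement is true here but does not follow formally from (5.3) (i); establishing it is the actual content of the cited lemmas, where the finiteness of $r(p)_{K}$, the perfectness of $\widehat K$ and the $p$-divisibility of $v(K)$ are used again (for instance by running a maximality-and-contradiction argument of the kind used for (5.3) (i), but up to the level $p\,v(\varepsilon -1)$ rather than $v(p)$). A second, smaller omission: when $F/K$ is inertial (which the hypotheses do not exclude), the approximation of $\beta $ by elements of $K$ terminates and $\kappa $ need not be approached from below, so your construction does not apply; the paper disposes of that case separately and trivially by taking a unit $\xi $ with $v(\xi )=v(\psi (\xi )-\xi )=0$ and putting $\lambda _{n}=a\xi +a ^{\prime }$ with $0<v(a ^{\prime })<v(a)<1/n$, and your proof should make the same case distinction before specializing to the immediate case.
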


\begin{proof}
It is clear from the $p$-Henselity of $v$ that if $F/K$ is
inertial, then $F/K$ has a primitive element $\xi $, such that
$v(\xi ) = v(\psi (\xi ) - \xi ) = 0$. Since $\widehat K$ is
perfect and $v(K) = pv(K)$, this allows us to assume further that
$F/K$ is immediate. Then our assertion is deduced by the method of
proving \cite{Pop}, (2.7), and \cite{Ch6}, (3.1) (see also
\cite{E1}, Lemma~3.3).
\end{proof}

\medskip

\section{\bf $p$-Henselian valuations with $p$-indivisible value
groups}

The purpose of this Section is to complete the proofs of Theorems
1.1 and 1.2. As a major step in this direction, we prove the
following lemma.

\medskip
\begin{lemm}
Let $(K, v)$ be a $p$-Henselian field with {\rm char}$(\widehat K)
\neq p$ and $v(K) \neq pv(K)$, for a given $p \in \mathbb P$, and
let $\varepsilon \in K _{\rm sep}$ be a primitive $p$-th root of
unity. Then:
\par
{\rm (i)} $\mathcal{G}(K(p)/K) \cong \mathcal{G}(\widehat
K(p)/\widehat K)$, provided that $\varepsilon \notin K _{h(v)}$;
\par
{\rm (ii)} $\mathcal{G}(K(p)/K) \cong \mathcal{G}(K _{\Delta
(v)}(p)/K _{\Delta (v)})$, if $\varepsilon \in K _{h(v)}$ and $v(K)
\neq \Delta (v)$;
\par
{\rm (iii)} If $\varepsilon \in K _{h(v)}$, then
$\mathcal{G}(K(p)/K) \cong \mathbb Z _{p} ^{\tau (p)} \rtimes
\mathcal{G}(\widehat K(p)/\widehat K)$, where $\tau (p)$ is the
dimension of $\Delta (v)/p\Delta (v)$ as an $\mathbb F _{p}$-vector
space and $\mathbb Z _{p} ^{\tau (p)}$ is a topological group
product of isomorphic copies of $\mathbb Z _{p}$, indexed by a set
of cardinality $\tau (p)$; in particular, if $\Delta (v) = p\Delta
(v)$, then $\mathcal{G}(K(p)/K) \cong \mathcal{G}(\widehat
K(p)/\widehat K)$.
\end{lemm}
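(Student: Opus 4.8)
The plan is to analyze $\mathcal{G}(K(p)/K)$ through its inertia filtration relative to $v$, using that $\mathrm{char}(\widehat K)\neq p$ forces $v(p)=0$, hence $G(K)=\{0\}$ and all ramification occurring in $p$-extensions is tame. Let $T$ be the compositum of the inertial extensions of $K$ in $K(p)$; by (2.4)(i) and (2.3)(ii), $T/K$ is Galois with $\mathcal{G}(T/K)\cong\mathcal{G}(\widehat K(p)/\widehat K)$ (the separable $p$-closure of $\widehat K$ equals $\widehat K(p)$ since $\mathrm{char}(\widehat K)\neq p$). Writing $I=\mathcal{G}(K(p)/T)$ for the inertia subgroup, one obtains the exact sequence $1\to I\to\mathcal{G}(K(p)/K)\to\mathcal{G}(\widehat K(p)/\widehat K)\to 1$, and the whole problem reduces to (a) identifying $I$ and (b) splitting this sequence.

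For (i), when $\varepsilon\notin K_{h(v)}$ I would show there are no nontrivial ramified $p$-extensions, so $I=\{1\}$ and $K(p)=T$. Indeed, any degree-$p$ subextension $L/K$ of $K(p)$ is cyclic; were it ramified, then after base change to the Henselization $L_{h}/K_{h(v)}$ would be a totally tamely ramified cyclic degree-$p$ extension, forcing $\mu_p\subseteq K_{h(v)}$ and hence $\varepsilon\in K_{h(v)}$, contrary to hypothesis. Thus every such $L/K$ is inertial, $K(p)=T$, and $\mathcal{G}(K(p)/K)\cong\mathcal{G}(\widehat K(p)/\widehat K)$; since here $\Delta(v)=G(K)=\{0\}$, this is precisely the case $\tau(p)=0$ of (iii).

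The core is (iii) with $\varepsilon\in K_{h(v)}$, where I would determine $I$ by computing $v(K(p))$. As $K(p)/T$ is tamely totally ramified and $\widehat T=\widehat K(p)\supseteq\mu_p$, tame inertia theory furnishes a perfect pairing giving $I\cong\mathrm{Hom}\bigl(v(K(p))/v(K),\mu_{p^\infty}\bigr)$. By Lemma 3.2 the ramified degree-$p$ descents to $K$ correspond to classes $a\in K_\varepsilon=\{a\in K(\varepsilon)^{\ast}:\varphi(a)a^{-s}\in K(\varepsilon)^{\ast p}\}$ with $v'(a)\notin p\,v'(K(\varepsilon))$; the explicit eigenvectors $b_c=\prod_{j}\varphi^{j}(\varepsilon-c)^{l^{j}}$ (for $c\in O_v(K)$ with $\bar c=\bar\varepsilon$, and $l$ as in Lemma 3.2) lie in $K_\varepsilon$, and a direct computation over the Henselization yields $v'(b_c)=v'(\varepsilon-c)$, so the distances $v'(\varepsilon-c)$ — whose isolated hull is $\Delta(v)$ — are realized as ramification values. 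Granting the converse (that no value outside $\Delta(v)$ occurs), $v(K(p))$ is the $p$-divisible hull of $\Delta(v)$ over $v(K)$, whence $I\cong\mathbb Z_p^{\tau(p)}$ with $\tau(p)$ the $\mathbb F_p$-dimension of $\Delta(v)/p\Delta(v)$. The sequence splits because, for the maximal ramified subextension $R$, one has $R\cap T=K$ and $RT=K(p)$, so $\mathrm{Gal}(K(p)/R)$ is inertial with $\mathrm{Gal}(K(p)/R)\cong\mathcal{G}(\widehat K(p)/\widehat K)$ and provides a section; the induced conjugation action on $I$ is the cyclotomic one arising from the action of $\mathcal{G}(\widehat K(p)/\widehat K)$ on $\mu_{p^\infty}\subseteq K(p)$ (cf. Lemma 3.1), yielding $\mathcal{G}(K(p)/K)\cong\mathbb Z_p^{\tau(p)}\rtimes\mathcal{G}(\widehat K(p)/\widehat K)$.

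Finally I would deduce (ii) from (iii). Applying (iii) to the $p$-Henselian field $(K_{\Delta(v)},\hat v_{\Delta(v)})$ — whose residue field is again $\widehat K$ by (2.1)(i), whose value group is $\Delta(v)$, and for which $\varepsilon$ reduces into the base so that its own invariant equals $\Delta(v)$ — gives $\mathcal{G}(K_{\Delta(v)}(p)/K_{\Delta(v)})\cong\mathbb Z_p^{\tau(p)}\rtimes\mathcal{G}(\widehat K(p)/\widehat K)$ with the same $\tau(p)$; the uniqueness of such semidirect decompositions (Remark 3.5) then identifies it with $\mathcal{G}(K(p)/K)$, proving (ii) and, when $\Delta(v)=p\Delta(v)$, the reduction to the inertial case. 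The main obstacle is the converse used in computing $I$: proving that every $a\in K_\varepsilon$ satisfies $v'(a)\in\Delta(v)+p\,v'(K(\varepsilon))$, i.e. that the ramification of $p$-extensions of $K$ is confined to $\Delta(v)$. This is exactly where the distance definition of $\Delta(v)$ via $v'((\varepsilon-c)^{m})$ must be matched with the descent constraint $\varphi(a)a^{-s}\in K(\varepsilon)^{\ast p}$, and I expect to settle it by a Krasner-type bound: an eigenvector with ramification value outside $\Delta(v)$ would force $K$ to approximate $\varepsilon$ beyond the isolated subgroup $\Delta(v)$, contradicting its minimality.
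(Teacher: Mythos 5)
Your overall framework---the inertia exact sequence $1\to I\to\mathcal{G}(K(p)/K)\to\mathcal{G}(\widehat K(p)/\widehat K)\to 1$, the tame pairing identifying $I$ with $\mathrm{Hom}(v(K(p))/v(K),\mu_{p^{\infty}})$, the eigenvector elements of $K_{\varepsilon}$ realizing ramification values, and the Zorn's-lemma complement giving the splitting---matches the paper's treatment of part (iii) in the special case $\Delta(v)=v(K)$, and your argument for (i) is a reasonable reconstruction of what the paper imports from \cite{Ch2}, Lemma~1.1 (a). The genuine gap is exactly the step you flag yourself: the claim that every $a\in K_{\varepsilon}$ satisfies $v'(a)\in\Delta(v)+p\,v'(K(\varepsilon))$, i.e. that ramification is confined to $\Delta(v)$. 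This cannot be extracted from the descent condition $\varphi(a)a^{-s}\in K(\varepsilon)^{\ast p}$ at the level of value groups alone: writing $v_{j}=v'\circ\varphi^{j-1}$ for the extensions of $v$ to $K(\varepsilon)$, that condition only yields congruences $v_{\pi(j)}(a)\equiv s\,v_{j}(a)$ modulo $pv(K(\varepsilon))$, which, since $s^{m}\equiv 1 \ ({\rm mod}\ p)$, are consistent with arbitrary values of $a$. What actually confines the ramification is the failure of the Approximation Theorem for the $v_{j}$ beyond the convex subgroup $\Delta(v)$, and making that precise is substantive work, not a routine Krasner estimate.

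The paper sidesteps this by reversing your logical order: it proves (ii) first, observing that the definition of $\Delta(v)$ forces $\varepsilon\notin K_{h(v_{\Delta(v)})}$ (no $c\in O_{v}(K)$ approximates $\varepsilon$ at all with respect to the coarsening $v_{\Delta(v)}$, while the cyclotomic polynomial has unit derivative there), so that the mechanism of case (i), applied to the coarsening, shows every finite extension of $K$ in $K(p)$ is inertial relative to $v_{\Delta(v)}$---which is precisely your confinement statement---and yields $\mathcal{G}(K(p)/K)\cong\mathcal{G}(K_{\Delta(v)}(p)/K_{\Delta(v)})$ canonically. The general case of (iii) then follows by applying the special case $\Delta(v)=v(K)$ to $(K_{\Delta(v)},\hat v_{\Delta(v)})$. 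You should adopt this order. Your proposed deduction of (ii) from (iii) is also shaky on its own terms: two semidirect products $\mathbb Z_{p}^{\tau}\rtimes\mathcal{G}(\widehat K(p)/\widehat K)$ are not isomorphic merely because their factors agree (the actions must be matched), and Remark 3.5 does not apply, since its hypotheses require the quotient to be free of rank $\ge 2$ or Demushkin of rank $\ge 3$, which $\mathcal{G}(\widehat K(p)/\widehat K)$ need not be. Finally, importing Henselian ramification theory requires knowing $K(p)K_{h(v)}=K_{h(v)}(p)$, i.e. that every $p$-extension of the Henselization descends to $K$; the paper proves this by induction from (6.1), and you use it tacitly.
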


\begin{proof} Suppose that $[K(\varepsilon )\colon K] = m$, take
$\sigma $ as in (2.5) and $\varphi $, $s$, $l$ as in Lemma 3.2, and
denote by $U$ the compositum of the inertial extensions of $K$ in
$K(p)$ relative to $v$. As $(K _{h(v)}, \sigma )/(K, v _{G(K)})$ is
immediate, it follows from the $p$-Henselian property of $v$ that $K
_{h(v)} \cap K(p) = K$. At the same time, by (2.5), $U.K _{h(v)}$
equals the compositum of the inertial extensions of $K _{h(v)}$ in
$K _{h(v)}(p)$ relative to $\sigma $. It is therefore clear from
\cite{Ch2}, Lemma~1.1 (a), that if $\varepsilon \notin K _{h(v)}$,
then $K _{h(v)} (p) = U.K _{h(v)} = K(p)K _{h(v)}$ and
$\mathcal{G}(K(p)/K) \cong \mathcal{G}(K _{h(v)} (p)/K _{h(v)})
\cong \mathcal{G}(\widehat K (p)/\widehat K)$. Thus Lemma 6.1 (i) is
proved.
\par
\smallskip
In the rest of the proof, we assume that $\varepsilon \in K
_{h(v)}$. Suppose first that $K(\varepsilon )$ embeds in $K _{v}$
over $K$. Fix a prolongation $v ^{\prime }$ of $v$ on $K(\varepsilon
)$, and put $v _{j} = v ^{\prime } \circ \varphi ^{j-1}$, for $j =
1, \dots , m$. Then the valuations $v _{1}, \dots , v _{m}$ are
independent, so it follows from the Approximation Theorem (cf.
\cite{B1}, Ch. VI, Sect. 7.2) that, for each $\gamma \in v(K)
\setminus pv(K)$, there exists $\alpha _{\gamma } \in K(\varepsilon
)$ of values $v _{1}(\alpha _{\gamma }) = \gamma $ and $v
_{j}(\alpha _{\gamma }) = 0$, $1 < j \le m$. Put $\tilde \alpha
_{\gamma } = \prod _{i=0} ^{m-1} \varphi ^{i}(\alpha _{\gamma }) ^{l
^{i}}$ and denote by $\widetilde K _{\tilde \alpha }$ the extension
of $K(\varepsilon )$ generated by the $p$-th roots of $\tilde \alpha
$ in $K _{\rm sep}$. It is easily verified that $\tilde \alpha
_{\gamma } \in K _{\varepsilon }$ and $v _{1}(\tilde \alpha _{\gamma
}) = \gamma $. Hence, by applying to $\widetilde K _{\tilde \alpha
}$ Ostrowski's theorem and Albert's theorem, one obtains the
following result:
\par
\medskip
(6.1) There exists $K _{\gamma } \in I(K(p)/K)$ with $[K
_{\gamma }\colon K] = p$ and $\gamma \in pv(K _{\gamma })$.
\par
\medskip\noindent As char$(\widehat K) \neq p$, it is clear from
(2.4), (6.1) and Ostrowski's theorem that every extension of $K
_{h(v)}$ in $K _{h(v)} (p)$ of degree $p$ is included in $K(p)K
_{h(v)}$. Proceeding by induction on $n$, using (6.1), and arguing
as in the proof of (2.4), one proves that $K(p)K _{h(v)}$ includes
every extension of $K _{h(v)}$ in $K _{h(v)}(p)$ of degree $p ^{n}$.
Thus the equality $K _{h(v)} (p) = K _{h(v)}K(p)$ becomes obvious as
well as the fact that $\mathcal{G}(K _{h(v)}(p)/K _{h(v)}) \cong
\mathcal{G}(K(p)/K)$. Note further that, by (2.5), $U$ does not
admit inertial proper extensions in $K _{h(v)}(p)$ and there are
canonical isomorphisms $\mathcal{G}(UK _{h(v)}/K _{h(v)}) \cong
\mathcal{G}(U/K) \cong \mathcal{G}(\widehat K(p)/\widehat K)$. Using
the fact that char$(\widehat K) \neq p$, one also obtains that
$\nabla _{0}(K _{h(v)}) \subseteq K _{h(v)} ^{\ast p ^{n}}$, for
every $n \in \mathbb N$. As $\varepsilon \in K _{h(v)}$ and $(K
_{h(v)}, \sigma )/(K, v)$ is immediate, one also sees that $\widehat
U ^{\ast } = \widehat U ^{\ast p ^{n}}$, $n \in \mathbb N$. These
observations indicate that $[(K _{\gamma }UK _{h(v)})\colon $ $UK
_{h(v)}] = p$ and the field $U _{\gamma } = K _{\gamma }UK _{h(v)}$
depends only on the subgroup of $v(K)/pv(K)$ generated by the coset
of $\gamma $. Fix a minimal system of generators $V _{p}$ of
$v(K)/pv(K)$ and a full system $W _{p}$ of representatives of the
elements of $V _{p}$ in $v(K)$. The preceding observations show that
every extension of $UK _{h(v)}$ in $K _{h(v)}(p)$ of degree $p$ is
included in the compositum of the fields $U _{\gamma }$, $\gamma \in
W _{p}$. Note further that, for each $\gamma \in W _{p}$, $U
_{\gamma }$ is generated over $UK _{h(v)}$ by a $p$-th root of an
element $u _{\gamma } \in UK _{h(v)}$ of value $\gamma $. Since
char$(\widehat K) \neq p$ and $\varepsilon \in K _{h(v)}$, it also
follows that $UK _{h(v)}$ contains a primitive $p ^{n}$-th root of
unity, for each $n \in \mathbb N$. In view of Kummer theory, this
leads to the conclusion that there exists an abelian extension $Z$
of $UK _{h(v)}$ in $K _{h(v)}(p)$ with $\sigma (Z) = p\sigma (Z)$.
Since $Z$ does not admit inertial proper extensions in $K
_{h(v)}(p)$, the obtained result, the inequality char$(\widehat K)
\neq p$ and Ostrowski's theorem imply that $Z = K _{h(v)}(p)$ and so
prove the following:
\par
\smallskip
(6.2) $K _{h(v)}(p)$ is abelian over $U.K _{h(v)}$ with
$\mathcal{G}(K _{h(v)}(p)/UK _{h(v)}) \cong \mathbb Z _{p} ^{\tau
(p)}$.
\par
\medskip\noindent
Note further that the set $\Theta (K _{h(v)}) = \{\Theta \in {\rm
I}(K _{h(v)}(p)/K _{h(v)})\colon \ UK _{h(v)} \cap \Theta = K
_{h(v)}\}$, partially ordered by inclusion, satisfies the conditions
of Zorn's lemma, whence it possesses a maximal element $T$. In view
of (6.1), $\sigma (T) = p\sigma (T)$, so it follows from Ostrowski's
theorem and the inequality char$(\widehat K) \neq p$ that $UT = K
_{h(v)}(p)$. Hence, by Galois theory and the equality $UK _{h(v)}
\cap T = K _{h(v)}$, $\mathcal{G}(K _{h(v)}(p)/K _{h(v)}) \cong
\mathcal{G}(K _{h(v)}(p)/UK _{h(v)}) \rtimes \mathcal{G}(UK
_{h(v)}/K _{h(v)})$. This, combined with (6.2) and the isomorphism
$\mathcal{G}(UK _{h(v)}/K _{h(v)}) \cong $
\par\noindent
$\mathcal{G}(\widehat K(p)/\widehat K)$, proves the assertion of
Lemma 6.1 (ii) in case $\Delta (v) = v(K)$.
\par
\smallskip
Assume finally that $\varepsilon \in K _{h(v)}$ and $\Delta (v) \neq
v(K)$, put $\delta = \hat v _{\Delta (v)}$, and denote for brevity
by $\widetilde K$ the Henselization of $K _{\Delta (v)}$ relative to
$\delta _{G(K)}$. It is clear from the definition of $\Delta (v)$
that $\varepsilon \notin K _{h(v _{\Delta (v)})}$. In view of (2.5)
with its proof, and of \cite{Ch2}, Lemma~1.1 (a), this ensures that
$K _{h(v _{\Delta (v)})}(p) = K(p).K _{h(v _{\Delta (v)})}$ and
$\mathcal{G}(K(p)/K) \cong \mathcal{G}(K _{\Delta (v)}(p)/K _{\Delta
(v)})$. At the same time, it follows from the definition of $\Delta
(v)$ that each Henselization of $K _{\Delta (v)}$ relative to $\hat
v _{\Delta (v)}$ contains a primitive $p$-th root of unity. Hence,
by Proposition 2.2, $\widetilde K$ contains such a root as well.
Note also that, by (2.1) (i), $\widehat K$ is isomorphic to the
residue field of $(K _{\Delta (v)}, \hat v _{\Delta (v)})$. These
observations, combined with (6.2), indicate that $\widetilde K(p) =
K _{\Delta (v)}(p).\widetilde K$ and $\mathcal{G}(\widetilde
K(p)/\widetilde K) \cong \mathcal{G}(K _{\Delta (v)}(p)/K _{\Delta
(v)}) \cong \mathbb Z _{p} ^{\tau (p)} \times \mathcal{G}(\widehat
K(p)/\widehat K)$, which completes the proof of Lemma 6.1.
\end{proof}

Lemma 6.1, Proposition 2.2 and our next lemma, enable one to deduce
Theorems 1.1 and 1.2 from Lemma 4.3 and Proposition 5.1.

\medskip
\begin{lemm}
Let $(K, v)$ be a $p$-Henselian field with {\rm char}$(K) = 0$, {\rm
char}$(\widehat K) = p$, $v(K) \neq pv(K)$ and $v(K) \neq G(K)$.
Take $r(p)_{K}$, $\varepsilon $, $K _{h(v)}$ and $G(K)$ as in
Theorems 1.1 and 1.2, and put $r(p) = r(p)_{K}$, $r(p) ^{\prime } =
r(p)_{K _{G(K)}}$. Then:
\par
{\rm (i)} $\mathcal{G}(K(p)/K) \cong \mathcal{G}(K _{G(K)}(p)/K
_{G(K)})$, provided that $\varepsilon \notin K _{h(v)}$;
\par
{\rm (ii)} If $\varepsilon \in K _{h(v)}$ and $\Delta (v)$ is
defined as in Theorems 1.1 and 1.2, then $r(p) \in \mathbb N$ if and
only if $r(p) ^{\prime } \in \mathbb N$ and $\Delta (v)/p\Delta (v)$
is a finite group; when this holds, $\Delta (v)/p\Delta (v)$ is of
order $p ^{r(p)-r(p)'}$;
\par
{\rm (iii)} When $r(p) ^{\prime } \in \mathbb N$ and $G(K) = pG(K)$,
$\mathcal{G}(K _{G(K)}(p)/K _{G(K)})$ is a free pro-$p$-group.
\end{lemm}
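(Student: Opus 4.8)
The plan is to prove the three parts of Lemma 6.2 by systematically reducing the analysis of $\mathcal{G}(K(p)/K)$ to the valuation $v_{G(K)}$, whose value group $v_{G(K)}(K) = v(K)/G(K)$ is $p$-divisible away from the contribution of $\Delta(v)$. The key structural tool throughout will be Proposition 2.1 (which factors the $\Omega$-Henselian property through isolated subgroups) together with the residue-field identification of Proposition 2.2 and the semidirect-product decomposition already established in Lemma 6.1. For part (i), when $\varepsilon \notin K_{h(v)}$, I would first observe that $\varepsilon \notin K_{h(v_{G(K)})}$ as well, so Lemma 6.1 (i) applies to the valuation $\hat v_{G(K)}$ of $K_{G(K)}$ and yields that adjoining $p$-th roots is controlled entirely by inertial data. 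The point is that since $\mathrm{char}(\widehat K) = p$, the value group $v(K)/G(K)$ is automatically $p$-divisible (because $G(K)$ is by definition the minimal isolated subgroup containing $v(p)$, so $v(p) \in G(K)$ forces $p$-divisibility of the quotient exactly as in the reduction step of Proposition 5.1). Hence finite extensions of $K$ in $K(p)$ are inertial relative to $G(K)$, and (2.3)(ii) gives the isomorphism $\mathcal{G}(K(p)/K) \cong \mathcal{G}(K_{G(K)}(p)/K_{G(K)})$.

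For part (ii), assuming $\varepsilon \in K_{h(v)}$, the plan is to apply Lemma 6.1 (iii) to $v$ itself, giving $\mathcal{G}(K(p)/K) \cong \mathbb{Z}_p^{\tau(p)} \rtimes \mathcal{G}(\widehat K(p)/\widehat K)$ where $\tau(p) = \dim_{\mathbb{F}_p} \Delta(v)/p\Delta(v)$. The analogous statement applied through $G(K)$ gives a description of $\mathcal{G}(K_{G(K)}(p)/K_{G(K)})$ in terms of $\Delta(\hat v_{G(K)})/p\Delta(\hat v_{G(K)})$. The heart of the computation is to relate these two: I would show that passing from $v$ to $v_{G(K)}$ quotients out exactly the portion of $\Delta(v)$ lying in $G(K)$, and since $v(p) \in G(K)$ forces the $G(K)$-part to interact with $\Delta(v)$ precisely through the values $v'((\varepsilon - c)^m)$ in the definition of $\Delta(v)$, the rank difference $r(p) - r(p)'$ measures the free $\mathbb{Z}_p$-corank contributed by $\Delta(v)/p\Delta(v)$ above the $G(K)$ level. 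Finiteness of $r(p)$ is then equivalent to finiteness of both $r(p)'$ and $\Delta(v)/p\Delta(v)$, with the order of the latter being $p^{r(p)-r(p)'}$; this equality I expect to follow by comparing the two semidirect product decompositions and invoking the rank arithmetic of Remark 3.5 to match the $\mathbb{Z}_p$-factors.

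For part (iii), when $G(K) = pG(K)$, the valuation $\hat v_{G(K)}$ has $p$-divisible value group $G(K)$, and I would invoke Proposition 5.1 directly: its hypotheses are $p$-Henselianity, $\mathrm{char}(\widehat{K_{G(K)}}) = p$ (inherited via (2.1)(i)), value group equal to $p$ times itself, and $r(p)' = r(p)_{K_{G(K)}} \in \mathbb{N}$. Proposition 5.1 then declares $\mathcal{G}(K_{G(K)}(p)/K_{G(K)})$ to be a free pro-$p$-group, which is exactly the claim.

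The main obstacle will be the rank identity $\abs{\Delta(v)/p\Delta(v)} = p^{r(p)-r(p)'}$ in part (ii). Establishing this requires carefully tracking how the dimension count $\tau(p)$ from Lemma 6.1 (iii) behaves under the change of valuation from $v$ to $v_{G(K)}$, i.e. how the $\mathbb{F}_p$-vector space $\Delta(v)/p\Delta(v)$ sits relative to its image in $v(K)/(G(K)+p\Delta(v))$ versus the intrinsic group $\Delta(\hat v_{G(K)})/p\Delta(\hat v_{G(K)})$. The subtlety is that $\Delta(v)$ is defined via the values $v'((\varepsilon - c)^m)$, and one must verify that quotienting by $G(K)$ neither collapses nor creates $p$-divisibility in this group in a way inconsistent with the semidirect-product rank bookkeeping; the delicate point is that $G(K) \subseteq \Delta(v)$ need not hold in general, so the interaction between the isolated subgroup $G(K)$ and the $\Delta$-construction must be handled with the definition of $\Delta(v)$ read at the level of $\hat v_{G(K)}$ via the residue identification of Proposition 2.2.
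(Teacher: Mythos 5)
Your part (iii) is exactly the paper's argument (Proposition 2.1 gives $p$-Henselianity of $\hat v _{G(K)}$, then Proposition 5.1 applies to $(K _{G(K)}, \hat v _{G(K)})$), but parts (i) and (ii) rest on two claims that fail. First, the assertion that $v(K)/G(K)$ is ``automatically $p$-divisible'' because $v(p) \in G(K)$ is false: the purity argument you are importing from the proof of Proposition 5.1 uses the hypothesis $v(K) = pv(K)$ there, which is explicitly negated here ($v(K) \neq pv(K)$). For $K = F((t))$ with $F/\mathbb Q _{p}$ finite and $v$ the composite valuation, $v(K)/G(K) \cong \mathbb Z$. If your claim were true, Ostrowski would make every finite extension of $K$ in $K(p)$ inertial relative to $v _{G(K)}$ unconditionally, part (i)'s hypothesis $\varepsilon \notin K _{h(v)}$ would be irrelevant, and part (ii) -- whose whole content is the nontrivial group $\Delta (v)/p\Delta (v)$ sitting above $G(K)$ -- would be empty. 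Second, you apply Lemma 6.1 to the wrong valuations: in (i) to $\hat v _{G(K)}$ on $K _{G(K)}$, whose residue field is $\widehat K$ of characteristic $p$, and in (ii) to $v$ itself, whose residue field is again $\widehat K$. Lemma 6.1 requires {\rm char}$(\widehat K) \neq p$, so neither application is admissible; worse, applying 6.1 (iii) to $v$ would output $\mathbb Z _{p} ^{\tau } \rtimes \mathcal{G}(\widehat K(p)/\widehat K)$ with $\mathcal{G}(\widehat K(p)/\widehat K)$ a free pro-$p$-group (characteristic-$p$ residue field), which is incompatible with the Demushkin case of Theorem 1.2.

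The paper's route is the opposite coarsening: Lemma 6.1 is applied to $(K, v _{G(K)})$, whose residue field $K _{G(K)}$ has characteristic $0$ (since $v _{G(K)}(p) = 0$), with Proposition 2.1 supplying the $p$-Henselianity of $v _{G(K)}$. This yields $\mathcal{G}(K(p)/K) \cong \mathcal{G}(K _{G(K)}(p)/K _{G(K)})$ when $\varepsilon \notin K _{h(v)}$ (since $K _{h(v _{G(K)})} \subseteq K _{h(v)}$, as you correctly note), and the semidirect product over $\mathcal{G}(K _{G(K)}(p)/K _{G(K)})$ -- not over $\mathcal{G}(\widehat K(p)/\widehat K)$ -- when $\varepsilon \in K _{h(v)}$, which is what makes the rank count $r(p) = r(p) ^{\prime } + \tau $ come out as stated. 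The one identity that carries the bookkeeping, $\Delta (v)/G(K) = \Delta (v _{G(K)})$, is missing from your outline; your closing worry that $G(K) \subseteq \Delta (v)$ might fail is unfounded here, since $(p-1)v ^{\prime }(\varepsilon - 1) = v(p)$ forces $v(p)$ into any isolated subgroup containing $v ^{\prime }((\varepsilon - 1) ^{m})$ when {\rm char}$(\widehat K) = p$. You would need to restructure (i) and (ii) around the coarsened valuation $v _{G(K)}$ before the rank identity can be extracted.
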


\begin{proof}
Proposition 2.1 and our assumptions show that $v _{G(K)}$ is
$p$-Henselian and char$(K _{G(K)}) = 0$. In addition, it is not
difficult to see that $\Delta (v)/G(K) = \Delta (v _{G(K)})$. This,
combined with Lemma 6.1, proves Lemma 6.2 (i) and (ii). Since, by
Proposition 2.1, $\hat v _{G(K)}$ is $p$-Henselian, Lemma 6.2 (iii)
is proved by applying Proposition 5.1 to $(K _{G(K)}, \hat v
_{G(K)})$.
\end{proof}

\medskip
\begin{rema}
Let $(K, v)$ be a $p$-Henselian field with {\rm char}$(\widehat K) =
p$, $r(p)_{K} \in \mathbb N$ and cd$(\mathcal{G}(K(p)/K) \ge 2$.
Then (6.1), Lemma 6.1 and the proof of Lemma 4.3 imply the existence
of a $p$-Henselian field $(\Lambda , z)$, such that
$\mathcal{G}(\Lambda (p)/\Lambda ) \cong \mathcal{G}(K(p)/K)$,
$\widehat \Lambda \cong \widehat K$, $z(\Lambda ) = \Delta (v)$, and
$\Lambda $ contains a primitive $p$-th root of unity. More
precisely, one may put $\Lambda = K _{h(v)}$ or take as $\Lambda $
the Henselization of $K _{\Delta (v)}$ relative to $\hat v _{\Delta
(v)}$, depending on whether or not $\Delta (v) = v(K)$. In view of
\cite{E1}, Theorems~3.7 and 3.8, this enables one to describe the
isomorphism classes of $\mathcal{G}(L(p)/L)$ when $(L, \lambda )$
runs across the class of $p$-Henselian fields singled out by
Theorems 1.1 and 1.2.
\end{rema}

\medskip
Our next result extends the scope of \cite{W2}, Lemma~7, as follows:

\medskip
\begin{coro} For a pro-$p$-group $P$ of rank $2$, the following
conditions are equivalent:
\par
{\rm (i)} $P$ is a free pro-$p$-group or a Demushkin group;
\par
{\rm (ii)} There exists a $p$-Henselian field $(K, v)$ with {\rm
char}$(\widehat K) = p$ and
\par\noindent
$\mathcal{G}(K(p)/K) \cong P$.
\end{coro}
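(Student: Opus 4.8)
The plan is to read off both implications from the structure Theorems 1.1 and 1.2, exploiting that the hypothesis $r(P) = 2$ rules out any Demushkin factor of rank $\ge 3$. For the implication (ii)$\to$(i), suppose $(K, v)$ is $p$-Henselian with {\rm char}$(\widehat K) = p$ and $\mathcal{G}(K(p)/K) \cong P$, $r(P) = 2$. If {\rm char}$(K) = p$, then $\mathcal{G}(K(p)/K)$ is a free pro-$p$-group (as recalled in the Introduction), so (i) holds; thus I would assume {\rm char}$(K) = 0$. By Theorem 1.1 (i), if $\varepsilon \notin K _{h(v)}$ or $\Delta (v) = p\Delta (v)$, then $P$ is free of rank $2$. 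In the remaining case $\varepsilon \in K _{h(v)}$, $\Delta (v) \neq p\Delta (v)$, I would split on $G(K)$: if $G(K) \neq pG(K)$, then Theorem 1.2 (i) gives $P \cong \mathbb Z _p ^{\kappa - 1} \rtimes \Psi$ with $\Psi$ a Demushkin group of rank $\ge 3$, forcing $r(P) \ge 3$, a contradiction; hence $G(K) = pG(K)$, and Theorem 1.1 (ii) applies, giving $P \cong \mathbb Z _p ^{\kappa } \rtimes \Psi$ with $\Psi$ a free pro-$p$-group.

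It then remains to analyse this semidirect product under $r(P) = 2$. Since $\Psi$ is pro-$p$ and acts continuously, its image in ${\rm Aut}(\mathbb Z _p ^{\kappa })$ lies in a pro-$p$ Sylow subgroup and so is trivial modulo $p$; consequently the Frattini quotient splits and $r(\mathbb Z _p ^{\kappa } \rtimes \Psi ) = \kappa + r(\Psi )$. Thus $r(P) = 2$ leaves only $(\kappa , r(\Psi )) \in \{(0,2),(1,1),(2,0)\}$, i.e. $P$ is free of rank $2$, or $P \cong \mathbb Z _p \rtimes \mathbb Z _p$, or $P \cong \mathbb Z _p ^{2}$. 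By the classification recalled in (1.3) (a one-relator computation of the cup product $H ^1 (P, \mathbb F _p) \times H ^1 (P, \mathbb F _p) \to H ^2 (P, \mathbb F _p)$, which is non-degenerate precisely because the commutator term survives), each of the latter two is a Demushkin group of rank $2$, which completes (ii)$\to$(i).

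For (i)$\to$(ii) I would construct the valued field explicitly. A free pro-$p$-group $F$ of rank $2$ is, by the Nielsen--Schreier property for pro-$p$-groups (cf. \cite{Lab1}, Theorem~2), a closed subgroup of a free pro-$p$-group of countable rank; realising the latter as $\mathcal{G}(k _0 (p)/k _0)$ for a field $k _0$ with {\rm char}$(k _0) = p$ and passing to the fixed field $k$ of $F$ in $k _0 (p)$ (which is $p$-closed) exhibits $F \cong \mathcal{G}(k(p)/k)$ with {\rm char}$(k) = p$. Taking a Henselian field of characteristic $p$ with residue field the perfect closure of $k$ and $p$-divisible value group then realises $F$ with a nontrivial valuation (by Proposition 5.1, the group stays free and the perfectness together with $p$-divisibility collapse the ramified Artin--Schreier part, keeping the rank equal to $2$). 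For a Demushkin group of rank $2$, which by (1.3) is $\mathbb Z _p ^{2}$ or a group $\mathbb Z _p \rtimes \mathbb Z _p$, I would prescribe a $p$-Henselian field with {\rm char}$(\widehat K) = p$, $G(K) = pG(K)$, $\varepsilon \in K _{h(v)}$, residue field $\widehat K$ with $\mathcal{G}(\widehat K(p)/\widehat K)$ trivial (resp. $\cong \mathbb Z _p$, e.g. $\widehat K = \mathbb F _p$) and $\Delta (v)/p\Delta (v)$ of order $p ^{2}$ (resp. $p$); such fields exist by the standard realisation of a prescribed residue field and value group (\cite{Er}, Ch.~4), and by Remark 6.3 together with \cite{E1}, Theorems~3.7 and 3.8, they recover $\mathbb Z _p ^{2}$ and the groups $\mathbb Z _p \rtimes \mathbb Z _p$ via Theorem 1.1 (ii).

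The main obstacle I anticipate is the bookkeeping in (i)$\to$(ii): matching the abstract list of rank-$2$ Demushkin groups in (1.3)---in particular the several pro-$2$-groups of (1.3) (iii)---with the field-theoretic invariants $\widehat K$ and $\Delta (v)$, so that each isomorphism type is genuinely realised and the action in $\mathbb Z _p \rtimes \mathbb Z _p$ (governed by the cyclotomic character on the $p$-power roots of unity in $K _{h(v)}$) has the correct depth. This is the step where I expect to rely most heavily on Efrat's classification cited in Remark 6.3, rather than on a direct computation.
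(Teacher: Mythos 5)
Your route is genuinely different from the paper's: instead of reducing (via Remark 6.3) to a ground field containing $\varepsilon $ and then invoking Ware's Lemma~7 for $p>2$ together with a norm-index argument through Proposition~3.4 for $p=2$, you read the structure of $\mathcal{G}(K(p)/K)$ directly off Theorems~1.1 and 1.2 and then classify the resulting semidirect products of rank $2$. The case analysis in (ii)$\to $(i) (free if $\varepsilon \notin K _{h(v)}$ or $\Delta (v)=p\Delta (v)$; rank $\ge 3$ hence impossible if $G(K)\neq pG(K)$; otherwise $\mathbb Z _{p}^{\kappa }\rtimes \Psi $ with $\Psi $ free) is sound. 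But the step that pins down $(\kappa ,r(\Psi ))$ rests on a false assertion: a pro-$p$ subgroup of ${\rm Aut}(\mathbb Z _{p}^{\kappa })=GL _{\kappa }(\mathbb Z _{p})$ need \emph{not} be trivial modulo $p$ for $\kappa \ge 2$ --- a $p$-Sylow subgroup of $GL _{\kappa }(\mathbb F _{p})$ is the full unipotent group. Without triviality mod $p$ the Frattini quotient can collapse: e.g.\ $\mathbb Z _{p}^{2}\rtimes \mathbb Z _{p}$ with unipotent action has rank $2$ but cohomological dimension $3$, so it is neither free nor Demushkin, and your enumeration would miss it. The conclusion is salvageable --- either because the action here is via the cyclotomic character (hence scalar and trivial mod $p$ since $\varepsilon \in K _{h(v)}$), or more cheaply because Lemma~6.2~(ii) already gives the additive rank count $r(p)_{K}=r(p)_{K _{G(K)}}+\kappa $ --- but as written the justification is wrong, and it is exactly the point where something could go wrong. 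The subsequent claim that every $\mathbb Z _{p}\rtimes \mathbb Z _{p}$ with continuous pro-$p$ action is Demushkin is true but is asserted rather than proved; it needs the classification of rank-$2$ Demushkin groups, not just the parenthetical cup-product remark.

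In (i)$\to $(ii) there is a second genuine gap, in the free case. You pass from a characteristic-$p$ field $k$ with $\mathcal{G}(k(p)/k)$ free of rank $2$ to ``a Henselian field of characteristic $p$ with residue field $k^{\rm perf}$ and $p$-divisible value group'' and claim the rank stays $2$. Proposition~5.1 cannot deliver this: finiteness of $r(p)_{K}$ is a \emph{hypothesis} there, not a conclusion, and for a merely Henselian field of characteristic $p$ the group $K/\wp (K)$ can be strictly larger than $\widehat K/\wp (\widehat K)$ because of immediate (defect) Artin--Schreier extensions, even when $\widehat K$ is perfect and $v(K)$ is divisible. You would need an algebraically maximal (e.g.\ maximally complete) field, or a direct verification that $K=\wp (K)+O _{v}(K)$, neither of which follows from the existence result in \cite{Er} that you cite. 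By contrast the paper sidesteps all of this: for the free case it simply takes $F=\mathbb Q _{p}$ ($p>2$), and for the Demushkin case it builds an explicit field $R _{p}\subseteq \mathbb Q _{p,{\rm sep}}$ with $\mathcal{G}_{R _{p}}\cong \mathbb Z _{p}$ and forms a Laurent series field over it, so that Lemma~6.1 and Labute's classification identify the group; your alternative of prescribing $\widehat K$, $v(K)$ and $\Delta (v)$ abstractly and appealing to \cite{E1}, Theorems~3.7 and 3.8, can be made to work, but the burden of matching each isomorphism type (especially the pro-$2$ families of (1.3)(iii)) is left entirely to the cited classification, as you yourself note.
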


\begin{proof}
Suppose first that $P$ satisfies (ii). Then, by Remark 6.3, $K$ can
be chosen so as to contain a primitive $p$-th root of unity. Hence,
by \cite{W2}, Lemma~7, $P$ satisfies (i) in case $p > 2$, so we
assume further that $p = 2$ and $P$ is not a free pro-$2$-group. In
view of Galois cohomology, this means that Br$(K) _{2} \neq \{0\}$,
and by Proposition 3.4, it suffices to show that, for each quadratic
extension $L/K$, $N(L/K)$ is of index $2$ in $K ^{\ast }$. As $r(P)
= 2$ and $K ^{\ast 2} \subseteq N(L/K)$, this amounts to proving
that $N(L/K)$ contains an element $a _{L} \in K ^{\ast } \setminus K
^{\ast 2}$. When $\sqrt{-1} \in K$, the assertion follows from
Kummer theory. Assume now that $-1 \notin K ^{\ast 2}$ and $M =
K(\sqrt{-1})$. Observing that, by \cite{La}, Theorem~3.16, and the
equality char$(\widehat K) = 2$, $K$ is nonreal, one obtains from
\cite{Wh}, Theorem~2, and Galois theory that $\mathcal{G}(K(p)/K)$
is torsion-free. Hence, by Galois cohomology \cite{S2},
$\mathcal{G}(K(p)/M)$ is not a free pro-$2$-group, so our argument
proves that $\mathcal{G}(K(2)/M)$ is Demushkin. It is therefore
clear from \cite{S1}, Ch. I, 4.5, that $\mathcal{G}(K(2)/K)$ is also
a Demushkin group, so (ii)$\to $(i).
\par
We prove that (i)$\to $(ii). In view of (1.1) and (1.2), applied to
$F = \mathbb Q_{p}$, $p > 2$, one may consider only the case where
$P$ is a Demushkin group. Let $\omega $ be the canonical discrete
valuation of $\mathbb Q _{p}$, $I _{p}$ the compositum of the
inertial extensions of $\mathbb Q_{p}$ in $\mathbb Q_{p}(p)$
relative to $\omega $, $\Gamma _{p} ^{\prime }$ the extension of $I
_{p}$ generated by the primitive roots of unity in $\mathbb
Q_{p,{\rm sep}}$ of $p$-primary degrees, and $\Gamma _{p}$ the
$\mathbb Z _{p}$-extension of $I _{p}$ in $\Gamma _{p} ^{\prime }$.
Clearly, $\Gamma _{p} ^{\prime }/I _{p}$ is abelian, and it follows
from (1.1) and Galois theory that $r(p)_{I _{p}} = \infty $. Note
also that $\omega _{I _{p}}$ is discrete and finite extensions of $I
_{p}$ in $\Gamma _{p} ^{\prime }$ are totally ramified relative to
$\omega _{I _{p}}$. Fix a field $R \in I(\Gamma _{p} ^{\prime }/I
_{p})$ so that $\mathcal{G}(\Gamma _{p} ^{\prime }/R)$ is a
procyclic pro-$p$-group and put $\Delta _{R} = \{R ^{\prime } \in
I(\mathbb Q _{p,{\rm sep}}/R)\colon \ R ^{\prime } \cap \Gamma _{p}
^{\prime } = R\}$. It is clear from the definition of $R$ that it
contains a primitive $p$-th root of unity $\varepsilon _{p}$, and it
follows from Galois theory that $r(p)_{R} = \infty $ and
$\mathcal{G}(\Gamma _{p} ^{\prime }/R) \cong \mathcal{G}(\Gamma _{p}
^{\prime }R ^{\prime }/R ^{\prime })$, for each $R ^{\prime } \in
\Delta _{R}$. Since $R$ is nonreal, this enables one to deduce from
Zorn's lemma, Galois theory and \cite{Wh}, Theorem~2, that there
exists $R _{p} \in \Delta _{R}$, such that $\mathcal{G}(\mathbb Q
_{p,{\rm sep}}/R _{p}) \cong \mathbb Z _{p}$. As $r(p)_{R} = \infty
$, this implies that $R _{p}$ contains as a subfield an infinite
extension of $I _{p}(\varepsilon )$. Hence, by the noted properties
of $\omega _{I _{p}}$, $\mathbb Q _{p,{\rm sep}}/R _{p}$ is
immediate relative to $\omega _{R _{p}}$. Let now $\omega (R _{p}) =
H$ and $K$ be a Laurent formal power series field in one
indeterminate over $R _{p}$. It is easy to see that then $K$ has a
Henselian valuation $v$ extending $\omega _{R _{p}}$, such that $H
\in {\rm Is}_{v}(K)$, $v(K)/H \cong \mathbb Z$ and $K _{H} \cong R
_{p}$. This, combined with Lemma 6.1 and the fact that $\varepsilon
_{p} \in R _{p}$, shows that $\mathcal{G}(K(p)/K)$ is a Demushkin
group and $r(p)_{K} = 2$. Observing finally that $R$ can be chosen
so that $\mathcal{G}(K(p)/K) \cong P$ (cf. \cite{Lab2}), one obtains
that (i)$\to $(ii), which completes our proof.
\end{proof}

The method of proving Theorems 1.1 and 1.2 has the following
application to the study of ramification properties of
$p$-extensions of $p$-Henselian fields.

\smallskip
\begin{coro}
Let $(K, v)$ be a $p$-Henselian field such that {\rm char}$(\widehat
K) = p$ and $v(K) \neq pv(K)$. Then $K$ has no totally ramified
extension in $K(p)$ of degree $p$ if and only if {\rm char}$(K) = 0$
and one of the following conditions holds:
\par
{\rm (i)} $G(K) = pG(K)$ and $K _{h(v)}$ contains no primitive
$p$-th roots of unity;
\par
{\rm (ii)} $K _{h(v)}$ contains a primitive $p$-th root of unity and
$\Delta (v) = p\Delta (v)$.
\end{coro}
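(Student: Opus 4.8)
The plan is to reduce the statement to a pure value-group condition and then to a local construction. First, if ${\rm char}(K) = p$, then since $v(K) \neq pv(K)$, Remark 4.2 produces (infinitely many) totally ramified extensions of $K$ in $K(p)$ of degree $p$; hence the absence of such an extension forces ${\rm char}(K) = 0$, and I assume this from now on. Next I would merge conditions (i) and (ii): since $\varepsilon \notin K _{h(v)}$ gives $\Delta (v) = G(K)$, condition (i) reads $\Delta (v) = p\Delta (v)$ with $\varepsilon \notin K _{h(v)}$, while (ii) reads $\Delta (v) = p\Delta (v)$ with $\varepsilon \in K _{h(v)}$; so (i) or (ii) holds if and only if $\Delta (v) = p\Delta (v)$. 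Thus it suffices to prove that, for ${\rm char}(K) = 0$, $K$ has a totally ramified degree-$p$ extension in $K(p)$ if and only if $\Delta (v) \neq p\Delta (v)$. Writing $\Gamma = \Delta (v)$ (so $\Gamma = G(K)$ when $\varepsilon \notin K _{h(v)}$), I first record, using the purity of isolated subgroups and the relation $(p - 1)v ^{\prime }(\varepsilon - 1) = v(p)$ from the proof of Lemma 4.1, that $v(p) \in G(K) \subseteq \Gamma $.

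The second step is to cut away the ramification lying ``above'' $\Gamma $. Consider the coarsening $v _{\Gamma }$, with value group $v(K)/\Gamma $ and residue field $K _{\Gamma }$. Since $v(p) \in \Gamma $, this valuation has residue characteristic $0$, and by the definition of $\Delta (v)$ (together with Proposition 2.2 in the case $\Gamma = G(K)$) one has $\varepsilon \notin K _{h(v _{\Gamma })}$. Because the residue characteristic is different from $p$ and $\varepsilon $ is missing from the Henselization, no extension of $K$ in $K(p)$ can be ramified relative to $v _{\Gamma }$: a tamely and totally ramified cyclic degree-$p$ extension would have to contain a primitive $p$-th root of unity. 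Consequently every extension of $K$ in $K(p)$ is inertial relative to $v _{\Gamma }$, and, via (2.4) and Proposition 2.1, totally ramified (relative to $v$) degree-$p$ extensions of $K$ correspond exactly to totally ramified (relative to $\hat v _{\Gamma }$) degree-$p$ extensions of $(K _{\Gamma }, \hat v _{\Gamma })$. As $\hat v _{\Gamma }(K _{\Gamma }) = \Gamma $ and the residue field is again $\widehat K$, it remains to prove that $(K _{\Gamma }, \hat v _{\Gamma })$ has a totally ramified degree-$p$ extension if and only if $\Gamma \neq p\Gamma $.

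For this reduced problem the easy half is the direction $\Gamma = p\Gamma $: a $p$-divisible $\Gamma $ admits no proper value-group extension of index $p$ inside a torsion-free ordered overgroup (if $px \in \Gamma $ and $\Gamma = p\Gamma $, then $px = py$ with $y \in \Gamma $, whence $x = y \in \Gamma $), so by Ostrowski's theorem there is no ramified extension at all. For the converse, suppose $\Gamma \neq p\Gamma $ and fix $\gamma \in \Gamma \setminus p\Gamma $. When $\varepsilon \in K _{h(v)}$ (so $\Gamma = \Delta (v)$), the Henselization $\Lambda $ of $(K _{\Gamma }, \hat v _{\Gamma })$ contains $\varepsilon $; since $K _{\Gamma }$ is $p$-Henselian (Proposition 2.1) and $\Lambda /K _{\Gamma }$ is immediate, totally ramified degree-$p$ extensions of $K _{\Gamma }$ correspond to those of $\Lambda $, and over $\Lambda $ one takes $\pi \in \Lambda $ of value $\gamma $ and forms $\Lambda (\pi ^{1/p})$, which is cyclic of degree $p$, lies in $\Lambda (p)$, and is totally ramified because $\gamma \notin p\Gamma $. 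When $\varepsilon \notin K _{h(v)}$ (so $\Gamma = G(K) = \hat v _{\Gamma }(K _{\Gamma }) = G(K _{\Gamma })$), Lemma 4.3 applies to $(K _{\Gamma }, \hat v _{\Gamma })$: if $\Gamma $ is noncyclic or $\widehat K$ is infinite, the construction in its proof yields totally ramified degree-$p$ extensions, and if $\Gamma $ is cyclic and $\widehat K$ is finite, $(K _{\Gamma }, \hat v _{\Gamma })$ is of local type, so (2.6), (1.1), (1.2) and local class field theory supply one.

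I expect the main obstacle to be the two descent steps — matching the totally ramified extensions of $K$ with those of $K _{\Gamma }$, and, in the root-of-unity case, with those of $\Lambda $ — since only isomorphisms of the Galois groups are directly available and one must check that the correspondence preserves the ramification index; here the immediacy of the intermediate extensions and the $p$-Henselian property are exactly what make the matching work. The conceptual point underlying the whole argument, and the reason the coarsening is taken precisely at $\Delta (v)$ rather than at $G(K)$ or $v(K)$, is that $\Delta (v)$ is the value-group invariant measuring the directions along which a wildly and totally ramified degree-$p$ extension can actually be built — directly by Kummer theory once one has descended to a field containing $\varepsilon $.
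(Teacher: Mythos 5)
Your reduction agrees with the paper's at almost every step: eliminating ${\rm char}(K)=p$ via Remark 4.2, merging (i) and (ii) into the single condition $\Delta(v)=p\Delta(v)$, passing to the coarsening at $\Delta(v)$ (relative to which every finite extension of $K$ in $K(p)$ is inertial, so that $v$-ramification is computed on the residue level), observing that a $p$-divisible value group admits no index-$p$ extension inside a torsion-free ordered overgroup, and invoking (4.3) and the proof of Lemma 4.3 when $\varepsilon\notin K_{h(v)}$ are exactly the moves made in the paper (which coarsens at $\Delta(v)$ or $G(K)$ according to whether $\varepsilon\in K_{h(v)}$ --- the same thing, since $\Delta(v)=G(K)$ in the latter case).

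The gap is in the remaining case $\varepsilon\in K_{h(v)}$, $\Delta(v)\neq p\Delta(v)$. You build the totally ramified extension over the Henselization $\Lambda$ of $(K_{\Delta(v)},\hat v_{\Delta(v)})$ by adjoining $\pi^{1/p}$ --- legitimate there because $\varepsilon\in\Lambda$ --- and then assert that totally ramified degree-$p$ extensions of $K_{\Delta(v)}$ in $K_{\Delta(v)}(p)$ correspond to those of $\Lambda$ in $\Lambda(p)$ because $\Lambda/K_{\Delta(v)}$ is immediate and the base is $p$-Henselian. That correspondence is precisely what has to be proved and does not follow formally from immediacy: the direction you need amounts to $\Lambda(p)=K_{\Delta(v)}(p)\cdot\Lambda$, and the naive descended candidate $K_{\Delta(v)}(\pi^{1/p})$ is not even Galois over $K_{\Delta(v)}$ when $\varepsilon\notin K_{\Delta(v)}$ (its Galois closure has degree $pm$ with $m=[K_{\Delta(v)}(\varepsilon):K_{\Delta(v)}]>1$), so it need not lie in $K_{\Delta(v)}(p)$ at all. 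Note also that identities of the type $\Lambda(p)=F(p)\Lambda$ are obtained in the paper only when the residue characteristic is $\neq p$ (Lemma 6.1) or in rank one via the Grunwald-Wang theorem, whereas the residue field of $\hat v_{\Delta(v)}$ is $\widehat K$, of characteristic $p$. What actually fills this hole is the paper's statement (6.1): using that the $m$ prolongations $v_1,\dots,v_m$ of the valuation to $K(\varepsilon)$ are pairwise independent, the Approximation Theorem produces $\alpha_\gamma\in K(\varepsilon)$ with $v_1(\alpha_\gamma)=\gamma\notin pv(K)$ and $v_j(\alpha_\gamma)=0$ for $j>1$; the twisted element $\tilde\alpha_\gamma=\prod_{i}\varphi^{i}(\alpha_\gamma)^{l^{i}}$ lies in $K_{\varepsilon}$, and Albert's theorem then descends the Kummer extension generated by $\tilde\alpha_\gamma^{1/p}$ to a cyclic degree-$p$ extension of $K$ in $K(p)$, which Ostrowski's theorem shows is totally ramified. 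Your argument needs this construction (or an actual proof of the descent from $\Lambda$), not just the immediacy of $\Lambda/K_{\Delta(v)}$.
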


\begin{proof}
In view of Remark 4.2, one may consider only the case of char$(K) =
0$. Statement (4.3) and the proof of Lemma 4.3 indicate that if
$v(K) = G(K)$, then $K$ possesses infinitely many totally ramified
extensions in $K(p)$ of degree $p$. By (6.1), the existence of a
totally ramified cyclic extension of $K$ of degree $p$ is also
guaranteed when $v(K) = \Delta (v)$. Note further that, for each $H
\in {\rm Is}_{v}(K)$, an inertial extension $L$ of $K$ in $K(p)$
relative to $v _{H}$ is totally ramified relative to $v$ if and only
if $L _{H}/K _{H}$ is totally ramified relative to $\hat v _{H}$.
Since $v _{H}$ is $p$-Henselian and, by the proof of Lemma 6.1,
finite extensions of $K$ in $K(p)$ are inertial relative to $v
_{H}$, provided that $H$ equals $\Delta (v)$ or $G(K)$ depending on
whether or not $K _{h(v)}$ contains a primitive $p$-th root of
unity, these observations imply together with (2.4) the necessity of
conditions (i) and (ii) of Corollary 6.5. They also show that, for
the proof of their sufficiency, it remains to be seen that, in case
$H = pH$, finite extensions of $K _{H}$ in $K _{H}(p)$ are inertial
relative to $\hat v _{H}$. As char$(K _{H}) = 0$, this follows at
once from Ostrowski's theorem, so Corollary 6.5 is proved.
\end{proof}

\medskip
The concluding result of this Section shows that each standardly
admissible pro-$p$-group $P$, where $p > 2$, is isomorphic to
$\mathcal{G}(E _{P}(p)/E _{P})$, for some field $E _{P}$ without a
primitive $p$-th root of unity.

\medskip
\begin{prop}
Let $P$ be a standardly admissible Demushkin pro-$p$-group, for some
$p \in \mathbb P \setminus \{2\}$, and let $m \in \mathbb N$ be a
divisor of $p - 1$. Then there exists a field $E$, such that
$\mathcal{G}(E(p)/E) \cong P$ and $[E(\varepsilon )\colon E] = m$,
where $\varepsilon $ is a primitive $p$-th root of unity in $E _{\rm
sep}$.
\end{prop}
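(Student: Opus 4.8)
The plan is to realize $P$ by \emph{descending} a larger Demushkin group along a prime-to-$p$ cyclotomic extension, so that the ground field itself acquires no primitive $p$-th root of unity. Concretely, I would look for a field $F$ together with a cyclic group $\Gamma = \langle \varphi \rangle$ of automorphisms of order $m$ such that $F ^{\Gamma} =: E$ satisfies $F = E(\varepsilon )$, $[E(\varepsilon )\colon E] = m$, and $D := \mathcal{G}(F(p)/F)$ is a Demushkin group on which $\varphi $ acts through the cyclotomic character on $\mu _p \subseteq F$. The first step is the reduction $\mathcal{G}(E(p)/E) \cong D _{\Gamma }$, where $D _{\Gamma }$ denotes the maximal pro-$p$-quotient of $D$ on which $\Gamma $ acts trivially: since $F/E$ is Galois of degree $m$ prime to $p$, the field $F(p)$ is Galois over $E$ and $E(p) \subseteq F(p)$, so $\mathcal{G}(E(p)/E)$ is the maximal pro-$p$-quotient of $\mathcal{G}(F(p)/E)$; by the Schur-Zassenhaus theorem $\mathcal{G}(F(p)/E) \cong D \rtimes \Gamma $, and as $|\Gamma |$ is prime to $p$ the maximal pro-$p$-quotient of $D \rtimes \Gamma $ is exactly $D _{\Gamma }$. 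Thus the task becomes: produce $F$ and $\Gamma $ with $D _{\Gamma } \cong P$.

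For the construction of $F$ I would \emph{not} take $F$ to be local, since then $E \subseteq F$ would be local as well and (1.2) would force $\mathcal{G}(E(p)/E)$ to be free rather than Demushkin as soon as $m > 1$. Instead I would build $F$ as a non-local $p$-Henselian field of residue characteristic $p$ with $\varepsilon \in F$, using Theorem 1.2 and Remark 6.3: taking $\widehat F$ finite, $G(F)$ cyclic, the value group $v(F)$ strictly larger than $G(F)$, and $\Delta (v)/G(F)$ $p$-divisible, I obtain $\mathcal{G}(F(p)/F)$ a standardly admissible Demushkin group $D$ of a prescribed rank $r(D)$, while the extra isolated subgroup of $v(F)$ keeps $F$ non-local. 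I would then arrange the cyclotomic extension so that $\Gamma = \mathcal{G}(F/E)$ is generated by a Frobenius-type automorphism of order $m$, acting on $\mu _p$ by $\varepsilon \mapsto \varepsilon ^{s}$ with $s$ of multiplicative order $m$ modulo $p$, precisely as in the set-up of Lemma 3.2 and the eigenspace formalism (3.3). Here the restriction of $v$ to $E$ is no longer $p$-Henselian, which is consistent with the regime of Theorem 1.3.

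The verification that $D _{\Gamma } \cong P$ is where the earlier machinery enters. By Lemma 3.1 the $\varphi $-action splits $D ^{\rm ab}/p \cong \mathrm{Hom}(F ^{\ast }/F ^{\ast p}, \mu _p)$ (Kummer theory) into eigenspaces for the $m$-th roots of unity, and $r(D _{\Gamma }) = r(p)_{E}$ equals the multiplicity of the cyclotomic character in $F ^{\ast }/F ^{\ast p}$; enlarging $r(D)$ (equivalently the degree of the underlying local recipe) I can make this multiplicity equal to the prescribed even number $r(P)$. To see that $D _{\Gamma }$ is genuinely Demushkin and isomorphic to $P$, I would apply Proposition 3.4: using the decomposition (3.3)--(3.6) of ${}_{p}{\rm Br}(F)$ into the submodules ${}_{p}{\rm Br}(F) _{j}$ and the norm-index computation of Lemma 3.2(ii), one checks that $H _{p,1}(E)$ is one-dimensional and that the single Demushkin relation of $D$ descends to the relation characterizing $P$. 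Since $p > 2$, (1.3)(ii) guarantees that a Demushkin group is determined by its rank and its orientation invariant $\theta $, so it is enough to match these two numerical invariants.

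The main obstacle is precisely the matching of the orientation invariant $\theta $ through the descent. The rank can be adjusted freely by enlarging $r(D)$, but controlling the cyclic reduced component of the character group of $D _{\Gamma }$ -- equivalently, the power of $p$ in the descended relation -- requires a careful choice of the $p$-primary cyclotomic behaviour of $F$ together with the character by which $\varphi $ acts on $\mu _p$. I expect the argument to proceed by first fixing the local recipe $(\widehat F, \Delta (v))$ so that, via the cup-product description of Proposition 3.4(iii) and the index formula of Lemma 3.2(ii), the orientation of $D$ and the twist by $\varphi $ combine to yield exactly the invariant $\theta $ of the given $P$, and then using the remaining freedom in $r(D)$ to pin down the rank. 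The case $m = 1$ is immediate, as one may then take $E = F$ itself, which already contains $\varepsilon $ and realizes $P$ by standard admissibility.
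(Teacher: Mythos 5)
Your reduction $\mathcal{G}(E(p)/E)\cong D_{\Gamma}$ (the maximal pro-$p$-quotient of $D\rtimes\Gamma$, i.e.\ the $\Gamma$-coinvariants of $D$ in the category of pro-$p$-groups) is correct as far as it goes, but the proposal has two genuine gaps, and they sit exactly at the points where the paper does its real work. First, you never actually produce the pair $(F,\Gamma)$: you describe $F$ as a non-local $p$-Henselian field built from Theorem 1.2 and Remark 6.3, but such valuation-theoretic constructions come with no order-$m$ automorphism whose fixed field $E$ satisfies $F=E(\varepsilon)$ and $[E(\varepsilon)\colon E]=m$. Manufacturing that automorphism \emph{is} the constructive content of the proposition. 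The paper obtains it arithmetically: starting from $\Phi=R_{m}\Gamma_{\nu}$ (with $R_{m}\subseteq\mathbb Q(\varepsilon)$ of degree $(p-1)/m$ over $\mathbb Q$), two applications of the Grunwald--Wang theorem yield a number field $F^{\prime}$ of degree $r(P)-2$ over $\mathbb Q$ with $F^{\prime}\cap F^{\prime}(\varepsilon)=F^{\prime}$ globally but $\varepsilon\in F^{\prime}_{f^{\prime}}$ locally, and $E$ is then taken to be a $p$-Henselization of $F^{\prime}$. Second, even granting $(F,\Gamma)$, your verification that $D_{\Gamma}$ is Demushkin with the invariants $(r(P),\theta)$ of $P$ is not carried out; you yourself flag the matching of $\theta$ as ``the main obstacle'' and offer only an expected line of argument. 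Computing coinvariants of a Demushkin group under a prime-to-$p$ action requires control of the action on the single relation (equivalently on $H^{2}$ and on the cup-product pairing between eigenspaces), none of which is supplied.

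The paper sidesteps the descent computation entirely, and it is worth seeing how, because your stated reason for rejecting ``local-type'' constructions rests on a slightly wrong premise. You argue that $E$ cannot sit inside a local field since (1.2) would force $\mathcal{G}(E(p)/E)$ to be free when $\varepsilon\notin E$; but (1.2) applies to $E$ only if $E$ itself is local. The paper's $E$ is a $p$-Henselization of a number field: it is dense in the local field $E_{f}$ and satisfies $\mathcal{G}(E(p)/E)\cong\mathcal{G}(E_{f}(p)/E_{f})$, yet $\varepsilon\in E_{f}\setminus E$. By Theorem 1.1(i) freeness is governed by whether $\varepsilon$ lies in the \emph{Henselization} $E_{h(v)}$, not in $E$, so $\mathcal{G}(E(p)/E)$ is Demushkin, its rank and the order $p^{\nu}$ of the reduced component of its character group are read off from the classical local results (1.1)--(1.3) and Demushkin's theorem, and (1.3)(ii) for $p>2$ pins down the isomorphism class, while $[E(\varepsilon)\colon E]=m$ holds by the global cyclotomic arrangement. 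To salvage your approach you would need either to complete the coinvariant computation for an explicitly presented $\Gamma$-action on $D$, or to fall back on an arithmetic construction of $E$ of the kind the paper uses.
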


\begin{proof}
Let $\mathbb Q$ be the field of rational numbers, $\Gamma $ the
unique $\mathbb Z _{p}$-extension of $\mathbb Q$ in $\mathbb Q _{\rm
sep}$, and for each $n \in \mathbb N$, let $\Gamma _{n}$ be the
subextension of $\mathbb Q$ in $\Gamma $ of degree $p ^{n-1}$. Fix a
primitive $p$-th root of unity $\varepsilon \in \mathbb Q _{\rm
sep}$, denote by $R _{m}$ the extension of $\mathbb Q$ in $\mathbb
Q(\varepsilon )$ of degree $(p - 1)/m$, suppose that the reduced
component of $C(P)$ is of order $p ^{\nu }$ (see (1.4)), put $\Phi =
R _{m}\Gamma _{\nu }$, and let $\omega $ be a valuation of $\Phi $
extending the normalized $p$-adic valuation of $\mathbb Q$. It
follows from the Grunwald-Wang theorem that there is a cyclic
extension $F$ of $\Phi $ in $\mathbb Q _{\rm sep}$, such that
$[F\colon \Phi ] = m$, $F \cap \mathbb Q(\varepsilon ) = \Phi $ and
$\Phi (\varepsilon )$ is embeddable over $\Phi $ in $F _{f}$, where
$f$ is a valuation of $F$ extending $\omega $. Using the same
theorem, one proves the existence of a field $F ^{\prime } \in
I(\mathbb Q _{\rm sep}/F)$, such that $F ^{\prime } \cap
F(\varepsilon ) = F$, $F ^{\prime }/F$ is inertial relative to $f$
and $[F ^{\prime }\colon F] = r$, where $r = (r(P)-2)/(p ^{\nu }-p
^{\nu -1})$. This implies that $F ^{\prime }$ has a unique, up-to an
equivalence, valuation $f ^{\prime }$ extending $f$. Consider now
some $p$-Henselization $E \in I(F ^{\prime }(p)/F ^{\prime })$ of
$F$ relative to $f ^{\prime }$. It follows from Galois theory and
the definition of $E$ that $[E(\varepsilon )\colon E] = m$.
Observing that $\varepsilon \in E _{h(f)}$, $[E _{f}\colon \mathbb Q
_{p}] = r(P)-2$ and $\mathcal{G}(E _{f}(p)/E _{f}) \cong
\mathcal{G}(E(p)/E)$, one obtains from (1.1) and (1.2) that
$\mathcal{G}(E(p)/E)$ is a Demushkin group and $r(p)_{E} = r(P)$.
Taking also into account that $\nu $ is the greatest integer for
which $E(\varepsilon )$ contains a primitive $p ^{\nu }$-th root of
unity, one deduces from \cite{D1}, Theorem~1 (see also \cite{Lab2},
Sect. 5), that the reduced component of $C(\mathcal{G}(E(p)/E))$ is
of order $p ^{\nu }$. Since $p > 2$ and by (1.3) (ii), $P$ is
uniquely determined by $(r(P), \nu )$, up-to an isomorphism, these
results indicate that $\mathcal{G}(E(p)/E) \cong P$, which proves
Proposition 6.6.
\end{proof}

\medskip

\section{\bf The decomposition groups of admissible Demushkin groups}

\par
\medskip
The purpose of this Section is to prove Theorem 1.3 and to describe
the decomposition groups of Demushkin pro-$p$-groups of finite rank.
The assumptions of Theorem 1.3 guarantee that the field $E ^{\prime
} = E(p) \cap E _{h(w)}$ is a proper extension of $E$, where $E
_{h(w)}$ is a Henselization of $E$ in $E _{\rm sep}$ relative to
$w$. Put $\Sigma (R) = R(\varepsilon ) \cap E ^{\prime }(\varepsilon
) ^{\ast p}$, for each $R \in {\rm I}(E(p)/E)$, and denote by $w
^{\prime }$ the $p$-Henselian valuation of $E ^{\prime }$ extending
$w$. Suppose first that $w(E)$ is Archimedean. We show that $E
^{\prime }/E$ is an infinite extension. Assuming the opposite, one
obtains the existence of a field $\Phi \in I(E(p)/E ^{\prime })$,
such that $\Phi /E$ is Galois, $[\Phi \colon E] \in \mathbb N$, and
$w$ has at least $2$ nonequivalent prolongations $w _{1}$ and $w
_{2}$ on $\Phi $. Since $w _{2} = w _{1} \circ \theta $, for some
$\theta \in \mathcal{G}(\Phi /E)$, and one can choose $w _{1} = w
^{\prime }_{\Phi }$, $w _{1}$ and $w _{2}$ must be $p$-Henselian
valuations. This contradicts the Grunwald-Wang theorem and thereby
proves that $[E ^{\prime }\colon E] = \infty $. Hence, by Remark
3.5, $\mathcal{G}(E(p)/E ^{\prime })$ is a free pro-$p$-group. Our
argument also relies on the fact that $E ^{\prime } _{h(w')}$ is
separably closed in $E ^{\prime }_{w'}$. In view of (3.6), this
ensures that $E ^{\prime \prime }_{\varepsilon } \subseteq N(L
^{\prime }/E ^{\prime \prime })$ whenever $L ^{\prime }$ is a finite
extension of $E ^{\prime }$ in $E(p)$ and $E ^{\prime \prime } \in
I(L ^{\prime }/E ^{\prime })$. Therefore, it can be easily deduced
from the Approximation Theorem (e.g., \cite{L}, Ch. XII, Sect. 1)
and norm transitivity in towers of finite separable extensions, that
$E _{\varepsilon } \subseteq N(L(\varepsilon )/E(\varepsilon
)).\Sigma (E)$, for every finite extension $L$ of $E$ in $E(p)$.
Consider the fields $E _{n} \in I(E(p)/E)$, $n \in \mathbb N$,
defined inductively as follows:
\par
\medskip
(7.1) $E _{1} = E$, and for each $n \in \mathbb N$, $E _{n+1}$ is
the compositum of the extensions of $E _{n}$ in $E(p)$ of degree
$p$.
\par
\medskip\noindent
It is clear from (7.1), Lemma 3.2 and Galois theory that $E _{n}/E$
is a finite Galois extension, and by Proposition 3.4,
$\mathcal{G}(E(p)/E _{n})$ is a Demushkin group of rank $2 + [E
_{n}\colon E](r(p)_{E} - 2)$, for each $n \in \mathbb N$. Note also
that Galois theory and the subnormality of proper subgroups of
finite $p$-groups imply that $E(p) = \cup _{n=1} ^{\infty } E _{n}$.
Observing that the symbol $E(\varepsilon )$-algebras $A
_{\varepsilon }(E(\varepsilon ); a _{1}, a _{2})$ and $A
_{\varepsilon }(E(\varepsilon ); a _{2}, a _{1})$ are inversely
isomorphic whenever $a _{1}, a _{2} \in E(\varepsilon ) ^{\ast }$,
one deduces from Corollary 3.3, statement (3.5) and Albert's theorem
that $N(E _{n+1} (\varepsilon )/E _{n}(\varepsilon )) \cap E
_{n,\varepsilon } \subseteq E _{n} (\varepsilon ) ^{\ast p}$. Denote
by $w _{n}$ the valuation of $E _{n}$ induced by $w ^{\prime }$, for
each index $n$. The preceding observations indicate that $\Sigma (E
_{n}) = E _{n,\varepsilon }$, so it follows from Albert's theorem and
the $p$-Henselity of $w ^{\prime }$ that $E _{n+1} \subset E ^{\prime
}$, $n \in \mathbb N$. This yields $E(p) \subseteq E ^{\prime }$ when
$w(E) \le \mathbb R$.
\par
\medskip
In order to deduce Theorem 1.3 in general, it suffices to prove the
following statements, assuming that $E ^{\prime } \neq E(p)$ and
$w(E)$ is non-Archimedean:
\par
\medskip
(7.2) (i) $w _{G(E)}$ is $p$-Henselian and finite extensions of $E$
in $E(p)$ are inertial relative to $w _{G(E)}$; in particular,
$\mathcal{G}(E _{G(E)}(p)/E _{G(E)}) \cong \mathcal{G}(E(p)/E)$;
\par
(ii) The valuation $\hat w _{G(E)}$ of $E _{G(E)}$ is Archimedean;
equivalently, $G(E) \subseteq H$, for every $H \in {\rm Is}_{w}(E)$,
$H \neq \{0\}$.
\par
\medskip\noindent
The rest of this Section is devoted to the proof of (7.2). Let $H
^{\prime }$ be an arbitrary group from Is$_{w'}(E ^{\prime })$. By
(2.1) (ii), the topologies on $E ^{\prime }$ induced by $w ^{\prime
}$ and by $w ^{\prime } _{H'}$ are equivalent, whence the
completions $E ^{\prime } _{w'}$ and $E ^{\prime } _{w' _{H'}}$ are
$E$-isomorphic. Note also that $H ^{\prime }$ is a maximal element
of Is$_{w'}(E ^{\prime })$ if and only if $H ^{\prime } \cap w(E)$
is maximal in Is$_{w}(E)$ (with respect to the orderings by
inclusion). As a first step towards the proof of (7.2), we prove the
following:
\par
\medskip
(7.3) With notation being as above, suppose that the group $H = H
^{\prime } \cap w(E)$ is maximal in Is$_{w}(E)$. Then:
\par
(i) $w _{H}$ is $p$-Henselian with char$(E _{H}) \neq p$;
\par
(ii) Finite extensions of $E$ in $E(p)$ are inertial relative to $w
_{H}$; in particular, $\mathcal{G}(E _{H}(p)/E _{H}) \cong
\mathcal{G}(E(p)/E)$;
\par
(iii) $E ^{\prime } _{H'}$ is a $p$-Henselization of $E _{H}$
relative to $\hat w _{H}$, and $\hat w ^{\prime }_{H'}$ is the
$p$-Henselian valuation of $E ^{\prime }_{H'}$ extending $\hat w
_{H}$.
\par
\medskip\noindent
Statement (2.1) (i) and the choice of $H$ ensure that $w ^{\prime
}_{H}(E ^{\prime })$ and $w _{H}(E)$ are Archimedean. Since, by
Proposition 2.1, $w ^{\prime } _{H'}$ is $p$-Henselian, these
observations enable one to deduce from the inequality $E ^{\prime }
\neq E(p)$ that $w _{H}$ is $p$-Henselian. As $w(E)$ is
non-Archimedean, we have $H \neq \{0\}$, which implies that $\hat w
_{H}$ is a nontrivial valuation of $E _{H}$. Therefore, $E _{H}$ is
infinite, so it follows from Proposition 5.1 and Lemma 4.3 that
char$(E _{H}) \neq p$. This completes the proof of (7.3) (i). The
latter conclusion of (7.3) (ii) follows from (2.3) (ii) and the
former one, and (7.3) (iii) is implied by (7.3) (i) and Proposition
2.1. It remains for us to prove former assertion of (7.3) (ii).
Denote for brevity by $(\Sigma , \sigma )$ some Henselization of
$(E, w _{H})$. Clearly, $(\Sigma , \sigma )/(E, w _{H})$ is
immediate. Moreover, by (2.1) (iii) and the maximality of $H$ in
Is$_{w}(E)$, $w _{H}(E)$ is Archimedean, which implies that $E$ is
dense in $\Sigma $ with respect to the topology of $\sigma $.
Observing also that $w _{H}$ is $p$-Henselian and arguing as in the
concluding part of the proof of (4.1), one obtains that $\Sigma (p)
= E(p)\Sigma $, $E(p) \cap \Sigma = E$ and $\mathcal{G}(E(p)/E)
\cong \mathcal{G}(\Sigma _{\sigma } (p)/\Sigma _{\sigma }) \cong
\mathcal{G}(\Sigma (p)/\Sigma )$. This implies the former statement
of (7.3) (ii) is equivalent to the one that finite extensions of
$\Sigma $ in $\Sigma (p)$ are inertial relative to $\sigma $. Thus
the proof of our assertion reduces to the special case in which $w
_{H}$ is Henselian. In this case, when $\varepsilon \notin E$, the
former part of (7.3) (ii) is contained in \cite{Ch2}, Lemma~1.1 (a).
Suppose finally that $w _{H}$ is Henselian and $\varepsilon \in E$.
By \cite{Ch3}, Lemma~3.8, and the assumption that
$\mathcal{G}(E(p)/E)$ is a Demushkin group, then $E$ is a
$p$-quasilocal field, in the sense of \cite{Ch3}. Since $r(p)_{E}
\ge 3$ and char$(E _{H}) \neq p$, this ensures that $w _{H}(E) = pw
_{H}(E)$ (apply \cite{Ch5}, (1.4)). The obtained result, combined
with Ostrowski's theorem, implies that finite extensions of $E$ in
$E(p)$ are inertial relative to $w _{H}$, so (7.3) is proved.

\medskip
\begin{rema}
In the setting of (7.2), let $w$ be of finite height $d \ge 2$,
and let $H$ and $H ^{\prime }$ be as in (7.3). Then $\hat w _{H}$
is of height $d - 1$, so (7.2) and Theorem 1.3 can be proved by
induction on $d$ (taking the inductive step via (7.3)).
\end{rema}

\medskip
The second step towards the proof of (7.2) and Theorem 1.3 is
contained in the following lemma.

\medskip
\begin{lemm}
Let $(E, w)$ be a field with $\mathcal{G}(E(p)/E)$ Demushkin,
$r(p)_{E} \in \mathbb N$, and $w _{H}$ non-$p$-Henselian, for any $H
\in {\rm Is}_{w}(E)$. Then $E(p) \subseteq E _{h(w)}$.
\end{lemm}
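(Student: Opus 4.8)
The plan is to show that $E' := E(p) \cap E_{h(w)}$ equals $E(p)$. On $E'$ the valuation $w$ induces a $p$-Henselian valuation $w'$, and $\mathcal{G}(E(p)/E')$ is precisely the decomposition group of $\mathcal{G}(E(p)/E)$ at $w$, so the asserted inclusion $E(p) \subseteq E_{h(w)}$ is equivalent to the equality $E' = E(p)$. Since the hypothesis, taken with $H = \{0\}$, says in particular that $w$ itself is non-$p$-Henselian, we have $E' \neq E$; I would then split the argument according to the chain ${\rm Is}_w(E)$ of proper isolated subgroups of $w(E)$.

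If $w(E)$ is Archimedean, I would simply run the argument already given in the paragraph preceding Remark 7.1: by the Grunwald--Wang theorem two inequivalent $p$-Henselian prolongations of $w$ to a finite Galois extension cannot coexist, so $E'/E$ cannot be finite, and as $E' \neq E$ this gives $[E':E] = \infty$; consequently $\mathcal{G}(E(p)/E')$, being a closed subgroup of infinite index in a Demushkin group, is free. Feeding this into (3.6) and combining norm transitivity in towers of separable extensions with the Approximation Theorem, one obtains $E_\varepsilon \subseteq N(L(\varepsilon)/E(\varepsilon)).\Sigma(E)$ for every finite $L \in {\rm I}(E(p)/E)$, and running this through the tower (7.1) forces $E(p) \subseteq E'$. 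This base case uses only that $\mathcal{G}(E(p)/E)$ is Demushkin of finite rank, so it is insensitive both to the hypothesis on coarsenings and to the precise value of $r(p)_E$.

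Suppose now that $w(E)$ is non-Archimedean and that ${\rm Is}_w(E)$ has a maximal element $H$; by (2.1)(iii) this happens exactly when the union $H(E)$ of the groups in ${\rm Is}_w(E)$ is a proper subgroup of $w(E)$. Choosing $H'$ maximal in ${\rm Is}_{w'}(E')$ with $H' \cap w(E) = H$ and arguing by contradiction from $E' \neq E(p)$, statement (7.3)(i) forces $w_H$ to be $p$-Henselian, which contradicts the assumption that $w_H$ is non-$p$-Henselian for every $H \in {\rm Is}_w(E)$. Only the $p$-Henselity assertion of (7.3)(i) is invoked here, and its proof rests solely on $E' \neq E(p)$ together with Demushkinness, not on $r(p)_E \ge 3$; hence the step is legitimate at every finite rank, and we conclude $E' = E(p)$.

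The remaining configuration---$w(E)$ non-Archimedean with $H(E) = w(E)$, so that ${\rm Is}_w(E)$ has no maximal element---is where (7.3) cannot be applied directly, and I expect it to be the main obstacle. I would remove it by reducing $w$ to finite height: membership of a fixed finite Galois $L \in {\rm I}(E(p)/E)$ in $E_{h(w)}$ is the finite statement that $w$ extends uniquely to $L$, and the data relevant to this are carried by a subfield of finite transcendence degree over the prime field, whose height is bounded by $1 + {\rm tr.deg}$ exactly as in the height estimate in the proof of Lemma 2.3. Once the height is finite the chain ${\rm Is}_w(E)$ is finite and automatically has a maximal element, so the preceding paragraph applies; Remark 7.1 already legitimises the resulting induction on the height. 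The delicate point requiring the most care is to verify that this descent preserves both the Demushkin, finite-rank hypothesis governing the extensions in play and the assumption that no coarsening is $p$-Henselian, so that the contradiction extracted from (7.3)(i) survives the reduction.
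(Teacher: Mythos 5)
Your first two cases are correct and coincide with how the paper itself begins: the Archimedean argument preceding Remark 7.1 uses only that $\mathcal{G}(E(p)/E)$ is Demushkin of finite rank (the paper notes this explicitly before Proposition 7.4), and when ${\rm Is}_{w}(E)$ has a maximal element $H$ the $p$-Henselity part of (7.3)(i) --- whose proof is precisely a reduction to that Archimedean argument applied to $w_{H}$ --- contradicts the hypothesis unless $E' = E(p)$. But these are the cases the paper disposes of in its opening lines; the substance of Lemma 7.2 is the remaining case $w(E) = H(E) = \bigcup_{H \in {\rm Is}_{w}(E)} H$, and there your proposal has a genuine gap. First, the ``finite statement'' you isolate is the wrong one: for $L/E$ finite Galois, $L \subseteq E_{h(w)}$ is not the assertion that $w$ extends uniquely to $L$ (that is the condition $L \cap E_{h(w)} = E$), but the opposite extreme, namely that $w$ has $[L\colon E]$ pairwise inequivalent, immediate prolongations to $L$. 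Second, and decisively, the descent to a subfield $\widetilde E$ of finite transcendence degree must preserve the hypothesis that \emph{no} coarsening is $p$-Henselian, and you leave exactly this unverified. Remark 7.3 (which moreover needs a primitive $p$-th root of unity and Merkur'ev--Suslin to transfer the Demushkin property when $r(p)_{E} \ge 3$, an assumption Lemma 7.2 does not make) only arranges $\tilde w$ itself to be non-$p$-Henselian; nothing prevents some $\tilde w_{H'}$, $H' \in {\rm Is}_{\tilde w}(\widetilde E)$, from being $p$-Henselian, because the witnesses to non-$p$-Henselity of the various $w_{H}$ on $E$ need not survive restriction (two inequivalent prolongations of $w_{H}$ to a degree-$p$ extension $M$ may induce equivalent valuations on the corresponding $\widetilde M$). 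If that happens, the contradiction you want from (7.3)(i) is unavailable, and the desired conclusion for $(\widetilde E, \tilde w)$ may simply be false.

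The paper closes this case by a construction your sketch does not touch. From $w(E) = H(E)$ it reduces to ${\rm char}(\widehat E) \neq p$, then uses the non-$p$-Henselity of every $w_{H}$ together with a discriminant argument to produce a degree-$p$ extension $R \subseteq E(p)$ whose defining polynomial becomes reducible modulo every sufficiently large $H \in {\rm Is}_{w}(E)$, so that $(R, w_{R})/(E, w)$ is immediate and $R$ embeds in the completion $E_{w}$, hence in $E' = E(p) \cap E_{h(w)}$. Iterating yields an infinite tower whose union $\Phi$ embeds in $E_{w}$ over $E$ and over which $\mathcal{G}(E(p)/\Phi )$ is free; a norm-approximation argument via (3.6), Lemma 3.2 (i) and the Approximation Theorem, played against $N(E_{n+1}(\varepsilon )/E_{n}(\varepsilon )) \cap E_{n,\varepsilon } = E_{n}(\varepsilon )^{\ast p}$, then forces each $E_{n}$ of the tower (7.1) into $E'$. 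Some substitute for this mechanism is required; the finite-height reduction as stated does not provide one.
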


\begin{proof}
Put $E ^{\prime } = E(p) \cap E _{h(w)}$, denote by $w ^{\prime }$
the $p$-Henselian valuation of $E ^{\prime }$ extending $w$, and let
$E _{n}$, $n \in \mathbb N$, be the fields defined in (7.1). We
prove Lemma 7.2 by showing that $E _{n} \subseteq E ^{\prime }$, for
every $n \in \mathbb N$. It is clear from Galois theory and the
subnormality of proper subgroups of finite $p$-groups that $(E, w
_{G})$ will be $p$-Henselian, if $w _{G}$ is uniquely extendable on
each finite extension of $E$ in $E(p)$ of degree $p$. At the same
time, (7.3) and our assumptions indicate that $w(E)$ equals the
union $H(E)$ of the groups $H \in {\rm Is}_{w}(E)$. Since $E
_{h(v)}$ contains as an $E$-subalgebra a Henselization of $(E, w
_{H})$, for each $H \in {\rm Is}_{w}(E)$, this allows us, for the
proof of Lemma 7.2, to consider only the special case where
char$(\widehat E) \neq p$. Also, it follows from the equality $w(E)
= H(E)$ that if $R$ is a finite extension of $E$ in $E(p)$, and
$f(X) \in O _{w}(E) [X]$ is the minimal (monic) polynomial of some
primitive element of $R/E$, then the value of the discriminant of
$f(X)$ is contained in some $H _{f} \in {\rm Is}_{w}(E)$. When $H
\in {\rm Is}_{w}(E)$, $H _{f} \subseteq H$ and the reduction $f _{H}
[X] \in E _{H} [X]$ modulo $M _{w _{H}}(E)$ is irreducible over $E
_{H}$, this implies that $w _{H}$ is uniquely extendable to a
valuation of $R$. Therefore, the assumptions that $r(p)_{E} \in
\mathbb N$ and $w _{H}$ is not $p$-Henselian, for any $H \in {\rm
Is}_{w}(E)$ indicate that $R$ can be chosen so that $f _{H}$ is
reducible over $E _{H}$ whenever $H \in {\rm Is}_{w}(E)$ and $H _{f}
\le H$. It follows from the choice of $R$ that it is embeddable in
$E _{w}$ over $E$. Since the valuation $w ^{\prime }$ of $E ^{\prime
}$ is $p$-Henselian, whence $E ^{\prime }$ is separably closed in $E
^{\prime }_{w'}$, this implies that $R \subseteq E ^{\prime }$. The
choice of $R$ also ensures that $(R, w _{R})$ is immediate over $(E,
w)$, where $w _{R}$ is the valuation of $R$ induced by $w ^{\prime
}$. Let $\chi $ be a generator of $\mathcal{G}(R/E)$. It follows
from the immediacy of $(R, w _{R})/(E, w)$ and the equality
$[R\colon E] = p$ that the compositions $w _{R} \circ \chi ^{u-1}$,
$u = 0, \dots , p - 1$, are pairwise independent valuations of $R$
extending $w$ (see \cite{B1}, Ch. VI, Sect. 8.2, and \cite{L}, Ch.
IX, Proposition~11). As $\mathcal{G}(E(p)/R)$ is a Demushkin group
and $r(p)_{R} \in \mathbb N$, this enables one to deduce from
Grunwald-Wang's theorem that $(R, w _{R})$ satisfies the conditions
of Lemma 7.2. The obtained result allows us to define inductively a
tower of fields $R _{n} \in I(E ^{\prime }/E)$, $n \in \mathbb N$,
whose union $\Phi $ is an infinite extension of $E$ and embeds in $E
_{w}$ over $E$. By \cite{Lab1}, Theorem~2 (ii),
$\mathcal{G}(E(p)/\Phi )$ is a free pro-$p$-group, so it follows
from (3.6), Lemma 3.2 (i), the density of $E$ in $\Phi $ and the
Approximation Theorem that if $L \in I(E(p)/E)$, $[L\colon E] \in
\mathbb N$ and $\alpha \in E _{\varepsilon }$, then there is $\theta
\in L _{\varepsilon }$, such that $\bar w(-1 + N _{E(\varepsilon
)}^{L(\varepsilon )} (\theta )\alpha ^{-1}) > 0$, for each
prolongation $\bar w$ of $w$ on $E(\varepsilon )$. In view of the
$p$-Henselity of $w ^{\prime }$, the equality char$(\widehat E) = 0$
and Ostrowski's theorem, this ensures that $N _{E(\varepsilon
)}^{L(\varepsilon )} (\theta )\alpha ^{-1} \in E ^{\prime
}(\varepsilon ) ^{\ast p}$. These observations, combined with the
fact that $N(E _{n+1}(\varepsilon )/E _{n}(\varepsilon )) \cap E
_{n,\varepsilon } = E _{n}(\varepsilon ) ^{\ast p}$, for every $n
\in \mathbb N$, enable one to prove by induction on $n$ that $E
_{n,\varepsilon } \subseteq E ^{\prime }(\varepsilon ) ^{\ast p}$,
whence $E _{n} \subseteq E ^{\prime }$, for every index $n$. As
$E(p) = \cup _{n=1} ^{\infty } E _{n}$, this means that $E ^{\prime
} = E(p)$, as claimed.
\end{proof}

\medskip
We are now in a position to prove (7.2) and Theorem 1.3. Denote by
$\mathcal{H}(E)$ the set of those $H \in {\rm Is}_{w}(E)$, $H
\supseteq G(E)$, for which $w _{H}$ is $p$-Henselian, and for each
$H \in \mathcal{H}(E)$, let $H ^{\prime }$ be the preimage in $w
^{\prime }(E ^{\prime })$ of the maximal torsion subgroup of $w
^{\prime }(E ^{\prime })/H$. It is easily verified that $H ^{\prime
} \in {\rm Is}_{w'}(E ^{\prime })$ and $H ^{\prime } \cap w(E) = H$.
The assumption that $E ^{\prime } \neq E(p)$ and Lemma 7.2 indicate
that $\mathcal{H}(E)$ is nonempty. Note also that finite extensions
of $E$ in $E(p)$ are inertial relative to $w _{H}$ (and
$\mathcal{G}(E(p)/E) \cong \mathcal{G}(E _{H}(p)/E _{H})$), for each
$H \in \mathcal{G}(E)$. Since char$(E _{H}) \neq p$, this is
obtained from \cite{Ch2}, Lemma~1.1 (a), and \cite{Ch5}, (1.4), by
the method of proving (7.3) (ii). Taking further into account that
$\mathcal{H}(E)$ is closed under the formation of intersections, and
applying Zorn's lemma, one concludes that $\mathcal{H}(E)$ contains
a minimal element, say $G$, with respect to inclusion. We show that
$G = G(E)$. As $G ^{\prime } \in {\rm Is}_{w'}(E ^{\prime })$ and $G
^{\prime } \cap w(E) = G$, it follows from Proposition 2.1 and the
$p$-Henselian property of $w _{G}$ that $(E ^{\prime }_{G'}, \hat w
_{G'})$ is a $p$-Henselization of $(E _{G}, \hat w _{G})$.
Considering now $(E _{G}, \hat w _{G})$ instead of $(E, w)$, and
applying Proposition 2.1, one reduces the proof of (7.2) and Theorem
1.3 to the special case in which $w(E) = G$. Using Lemma 7.2, one
also concludes that if $G \neq G(E)$, then there exists a group
$\widetilde G \in \mathcal{H}(E)$ which is properly included in $G$.
This contradicts the minimality of $G$ in $\mathcal{H}(E)$ and so
proves that $G = G(E)$. In view of the preceding observations, the
obtained result completes the proof of (7.2) (i) (and of Theorem 1.3
in the case where char$(\widehat E) \neq p$). The equality $w(E) =
G(E)$ means that $w(p) \notin H$, for any $H \in {\rm Is}_{w}(E)$.
This ensures that char$(E _{H}) = p$, $H \in {\rm Is}_{w}(E)$, and
Is$_{w}(E)$ satisfies the conditions of Zorn's lemma with respect to
the ordering by inclusion. Taking a maximal element $\overline H$ in
Is$_{w}(E)$, one obtains from (7.3) (i) that $\overline H = 0$,
which completes the proof of (7.2). As $w$ is not $p$-Henselian, the
obtained result and Proposition 2.1 imply that $\hat w _{G(E)}$ is
not $p$-Henselian and $(E ^{\prime }_{G(E)'}, \hat w _{G(E)'})$ is a
$p$-Henselization of $(E _{G(E)}, \hat w _{G(E)})$. Since
$\mathcal{G}(E(p)/E) \cong \mathcal{G}(E _{G(E)}(p)/E _{G(E)})$ and
$\hat w_{G(E)}$ is Archimedean, this rules out the possibility that
$E ^{\prime } \neq E(p)$, so Theorem 1.3 is proved.

\medskip
\begin{rema}
Let $E$ be a field with $r(p)_{E} \in \mathbb N$, for some $p \in
P(E)$, and let $\mathbb F$ be the prime subfield of $E$. Suppose
that $\mathcal{G}(E(p)/E)$ is a Demushkin group and, in case
$r(p)_{E} \ge 3$, $E$ contains a primitive $p$-th root of unity. As
shown in \cite{Ch6}, then it follows from Proposition 3.4,
\cite{Wh}, Theorem~2, and (when $r(p) \ge 3$) from \cite{MS},
(16.1), and the isomorphism $H ^{2}(\mathcal{G}(E(p)/E), \mathbb F
_{p}) \cong \ _{p} {\rm Br}(E)$, that $E$ possesses a subfield
$\widetilde E$, such that $\mathcal{G}(\widetilde E(p)/\widetilde E)
\cong \mathcal{G}(E(p)/E)$, $\widetilde E$ is algebraically closed
in $E$ and $\widetilde E/\mathbb F$ is of finite transcendency
degree $d$. This ensures that nontrivial valuations of $\widetilde
E$ are of heights $\le d + 1$. Note also that, under the hypotheses
of Theorem 1.3, $\widetilde E$ can be chosen so that its valuation
$\tilde w$ induced by $w$ is nontrivial and non-$p$-Henselian. In
view of Remark 7.1, these observations simplify  the proof of
Theorem 1.3 and relate (1.5) to the problem of classifying
admissible Demushkin groups.
\end{rema}

\medskip
Theorem 1.3 allows one to view Theorem 1.1 (ii) as a generalization
of \cite{E2}, Theorem~6.3 (ii). Observe now that the proof of
Theorem 1.3 in case $w(E) \le \mathbb R$ and the one of Lemma 7.2 do
not require that $r(p)_{E} \ge 3$. Since, by Proposition 3.4, the
class of Demushkin groups of rank $2$ is closed under the formation
of open subgroups, this enables one to obtain the following result
by adapting the proof of Theorem 1.3:

\medskip
\begin{prop}
Let $(E, w)$ be a nontrivially valued field with $r(p)_{E} = 2$,
for some $p \in P(E)$, and suppose that $\mathcal{G}(E(p)/E)$ is a
Demushkin group and the field $E ^{\prime } = E _{h(w)} \cap E(p)$
is different from $E$ and $E(p)$. Then:
\par
{\rm (i)} $E _{h(w)}$ contains a primitive $p$-th root of unity,
$E ^{\prime }/E$ is a $\mathbb Z _{p}$-extension and $p \notin
P(\widehat E)$; in addition, if {\rm char}$(\widehat E) = p$, then
$\widehat E$ is perfect;
\par
{\rm (ii)} $\Delta (w) \neq p\Delta (w)$ and $w _{H}$ is
$p$-Henselian, for some $H \in {\rm Is}_{w}(E)$; in particular,
$w(E)$ is non-Archimedean.
\end{prop}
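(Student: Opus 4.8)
The plan is to follow the proof of Theorem 1.3, watching for the two places where $r(p)_E \ge 3$ was used, and to exploit that Lemma 7.2 and the $w(E) \le \mathbb{R}$ part of that proof are valid for any $r(p)_E \in \mathbb{N}$. Write $E' = E(p) \cap E_{h(w)}$ and let $w'$ be the $p$-Henselian valuation of $E'$ extending $w$; the hypotheses $E' \neq E$ and $E' \neq E(p)$ say exactly that $w$ is not $p$-Henselian and that the decomposition group $D = \mathcal{G}(E(p)/E')$ is a nontrivial proper closed subgroup of $P = \mathcal{G}(E(p)/E)$.

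First I would settle the last two assertions of (ii). Since Lemma 7.2 needs only $r(p)_E \in \mathbb{N}$, its contrapositive applied to $E' \neq E(p)$ produces an $H \in \mathrm{Is}_w(E)$ with $w_H$ $p$-Henselian. Because $w = w_{\{0\}}$ is not $p$-Henselian (as $E' \neq E$), this $H$ is nonzero, so $w(E)$ is non-Archimedean; the same conclusion follows from the $w(E) \le \mathbb{R}$ part of the proof of Theorem 1.3, which would otherwise force $E' = E(p)$.

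Next I would run the reduction of Section 7. Form $\mathcal{H}(E)$ and choose a minimal element $G$; as in the proof of (7.3)(i), $w_G$ $p$-Henselian forces $\mathrm{char}(E_G) \neq p$, hence $G \supseteq G(E)$. Now the passage to the residue is governed by Lemma 6.1 rather than by inertiality: $\mathcal{G}(E(p)/E) \cong \mathbb{Z}_p^{\tau} \rtimes \mathcal{G}(E_G(p)/E_G)$ with $\tau = \dim_{\mathbb{F}_p} \Delta(w_G)/p\Delta(w_G)$ if $\varepsilon \in E_{h(w_G)}$, and $\cong \mathcal{G}(E_G(p)/E_G)$ otherwise. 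By minimality of $G$ together with (7.3)(i), the residual valuation $\hat w_G$ on $(E_G, \hat w_G)$ admits no $p$-Henselian coarsening, so if $\tau = 0$ (in particular if $\varepsilon \notin E_{h(w_G)}$, equivalently $\varepsilon \notin E_G$ by Proposition 2.2) the decomposition of $w$ coincides with that of $\hat w_G$, and Lemma 7.2 applied to $(E_G, \hat w_G)$ forces that decomposition to be trivial, i.e. $E' = E(p)$, contrary to hypothesis. Hence $\varepsilon \in E_{h(w)}$ and $\tau \ge 1$, which is the first assertion of (i). Because $P$ is Demushkin of rank $2$, and the alternative $\tau = 2$ with trivial residual group would make the whole inertia $\mathbb{Z}_p^{2}$ lie in $D$ and force $E' = E$, the semidirect decomposition leaves only $\tau = 1$ with $\mathcal{G}(E_G(p)/E_G) \cong \mathbb{Z}_p$, infinite-index closed subgroups of $P$ being free by Remark 3.5 and \cite{Lab1}. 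From $\tau = 1 > 0$ I get $\Delta(w_G) \neq p\Delta(w_G)$, and then $\Delta(w) \neq p\Delta(w)$ via $\Delta(w)/G(E) = \Delta(w_G)$ (Lemma 6.2); this is the first assertion of (ii).

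It remains to describe $D$ and the residue field. The inertia subgroup $\mathbb{Z}_p^{\tau} = \mathbb{Z}_p$ lies in $D$, since the $p$-Henselian coarsening $w_G$ contributes only ramification and never splitting; using that $\hat w_G$ is totally split (the residual translation of $E' \neq E$ together with the absence of $p$-Henselian coarsenings) one obtains $D = \mathbb{Z}_p$ and $\mathcal{G}(E'/E) \cong \mathcal{G}(E_G(p)/E_G) \cong \mathbb{Z}_p$, so $E'/E$ is a $\mathbb{Z}_p$-extension and $r(p)_{E'} = 1$. Since a decomposition field is immediate over the base, $\widehat{E'} = \widehat E$; applying the contrapositive of Lemma 4.1 to the $p$-Henselian field $(E', w')$ with $r(p)_{E'} = 1$ then gives that $\widehat E$ is perfect when $\mathrm{char}(\widehat E) = p$. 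Finally, $r(p)_{E'} = 1$ together with $\Delta(w') \neq p\Delta(w')$ (the single generator of $D$ being already accounted for by the $\varepsilon$-ramification) forces the residual group $\mathcal{G}(\widehat E(p)/\widehat E)$ to vanish, by Theorem 1.1 (or Lemma 6.1 when $\mathrm{char}(\widehat E) \neq p$), i.e. $p \notin P(\widehat E)$. The essential new point beyond Theorem 1.3, and the main obstacle, is precisely the failure of inertiality at rank $2$: where the proof of (7.3)(ii) invoked $r(p)_E \ge 3$ via \cite{Ch5}, (1.4) to force $p$-divisibility of the relevant value group, here that divisibility fails, and the delicate work is to extract the exact corank $\tau = 1$ and the procyclicity of both $D$ and the residual Galois group from the rank-$2$ Demushkin classification (Proposition 3.4, Corollary 6.4).
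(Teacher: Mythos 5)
Your proposal is correct and follows essentially the same route as the paper, which itself only indicates that Proposition 7.4 is obtained by adapting the proof of Theorem 1.3, using that Lemma 7.2 and the Archimedean case of that proof remain valid for $r(p)_{E} = 2$ and that rank-$2$ Demushkin groups are closed under open subgroups. You correctly locate the one place where $r(p)_{E} \ge 3$ entered (the inertiality argument of (7.3)(ii) via \cite{Ch5}, (1.4)) and replace it by the semidirect decomposition of Lemma 6.1 together with the rank count $\tau + r(\mathcal{G}(E _{G}(p)/E _{G})) = 2$, which is exactly what forces $\tau = 1$ and yields the $\mathbb Z _{p}$-extension $E ^{\prime }/E$ and the remaining assertions.
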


\medskip
It is not difficult to see that, for each Demushkin pro-$p$-group
$P$ of rank $2$, there exists $(E, w)$ satisfying the conditions
of Proposition 7.4 with
\par\noindent
$\mathcal{G}(E(p)/E) \cong P$. Note also that $\mathcal{G}(E(p)/E
^{\prime })$ is a characteristic subgroup of $\mathcal{G}(E(p)/E)$
unless $P \cong \mathbb Z _{p} ^{2}$. This follows from Galois
theory and the fact that $E ^{\prime }$ is the unique $\mathbb Z
_{p}$-extension of $E$ in $E(p)$. Observe finally that the
(continuous) automorphism group of $\mathbb Z _{p} ^{2}$ acts
transitively upon the set $\Omega _{p}$ of those closed subgroups
$\Gamma \le \mathbb Z _{p} ^{2}$, for which $\mathbb Z _{p}
^{2}/\Gamma \cong \mathbb Z _{p}$. This implies that if $P \cong
\mathbb Z _{p} ^{2}$ and $\Gamma \in \Omega _{p}$, then $\Gamma $
is a direct summand in $\mathbb Z _{p} ^{2}$, $\Gamma \cong
\mathbb Z _{p}$ and there is an isomorphism $P \cong
\mathcal{G}(E(p)/E)$ which maps $\Gamma $ upon $\mathcal{G}(E(p)/E
^{\prime })$. Thus Corollary 6.4 and Proposition 7.4, combined
with Theorems 1.1, 1.2 and 1.3, completely describe the
decomposition groups of admissible Demushkin pro-$p$-groups of
finite rank. The question of whether a field $R$, such that
$\mathcal{G}(R(p)/R)$ is a Demushkin group with $r(p)_{R} \in
\mathbb N$, possesses a $p$-Henselian valuation remains open. By
\cite{E2}, Proposition~9.4, the answer is affirmative, if
$r(p)_{R} = 2$ and $R$ contains a primitive $p$-th root of unity.

\medskip

\end{document}